    \renewcommand\section{\@startsection {section}{1}{\z@}%
                                       {-3.5ex \@plus -1ex \@minus -.2ex}%
                                       {2.3ex \@plus.2ex}%
                                       {\normalfont\fontfamily{phv}\fontsize{16}{19}\bfseries}}
    \renewcommand\subsection{\@startsection{subsection}{2}{\z@}%
                                         {-3.25ex\@plus -1ex \@minus -.2ex}%
                                         {1.5ex \@plus .2ex}%
                                         {\normalfont\fontfamily{phv}\fontsize{14}{17}\bfseries}}
    \renewcommand\subsubsection{\@startsection{subsubsection}{3}{\z@}%
                                        {-3.25ex\@plus -1ex \@minus -.2ex}%
                                         {1.5ex \@plus .2ex}%
                                         {\normalfont\normalsize\fontfamily{phv}\fontsize{14}{17}\selectfont}}
\let\save@mathaccent\mathaccent
\newcommand*\if@single[3]{%
  \setbox0\hbox{${\mathaccent"0362{#1}}^H$}%
  \setbox2\hbox{${\mathaccent"0362{\kern0pt#1}}^H$}%
  \ifdim\ht0=\ht2 #3\else #2\fi
  }
\newcommand*\rel@kern[1]{\kern#1\dimexpr\macc@kerna}
\newcommand*\widebar[1]{\@ifnextchar^{{\wide@bar{#1}{0}}}{\wide@bar{#1}{1}}}
\newcommand*\wide@bar[2]{\if@single{#1}{\wide@bar@{#1}{#2}{1}}{\wide@bar@{#1}{#2}{2}}}
\newcommand*\wide@bar@[3]{%
  \begingroup
  \def\mathaccent##1##2{%
%Enable nesting of accents:
    \let\mathaccent\save@mathaccent
%If there's more than a single symbol, use the first character instead (see below):
    \if#32 \let\macc@nucleus\first@char \fi
%Determine the italic correction:
    \setbox\z@\hbox{$\macc@style{\macc@nucleus}_{}$}%
    \setbox\tw@\hbox{$\macc@style{\macc@nucleus}{}_{}$}%
    \dimen@\wd\tw@
    \advance\dimen@-\wd\z@
%Now \dimen@ is the italic correction of the symbol.
    \divide\dimen@ 3
    \@tempdima\wd\tw@
    \advance\@tempdima-\scriptspace
%Now \@tempdima is the width of the symbol.
    \divide\@tempdima 10
    \advance\dimen@-\@tempdima
%Now \dimen@ = (italic correction / 3) - (Breite / 10)
    \ifdim\dimen@>\z@ \dimen@0pt\fi
%The bar will be shortened in the case \dimen@<0 !
    \rel@kern{0.6}\kern-\dimen@
    \if#31
      \overline{\rel@kern{-0.6}\kern\dimen@\macc@nucleus\rel@kern{0.4}\kern\dimen@}%
      \advance\dimen@0.4\dimexpr\macc@kerna
%Place the combined final kern (-\dimen@) if it is >0 or if a superscript follows:
      \let\final@kern#2%
      \ifdim\dimen@<\z@ \let\final@kern1\fi
      \if\final@kern1 \kern-\dimen@\fi
    \else
      \overline{\rel@kern{-0.6}\kern\dimen@#1}%
    \fi
  }%
  \macc@depth\@ne
  \let\math@bgroup\@empty \let\math@egroup\macc@set@skewchar
  \mathsurround\z@ \frozen@everymath{\mathgroup\macc@group\relax}%
  \macc@set@skewchar\relax
  \let\mathaccentV\macc@nested@a
%The following initialises \macc@kerna and calls \mathaccent:
  \if#31
    \macc@nested@a\relax111{#1}%
  \else
%If the argument consists of more than one symbol, and if the first token is
%a letter, use that letter for the computations:
    \def\gobble@till@marker##1\endmarker{}%
    \futurelet\first@char\gobble@till@marker#1\endmarker
    \ifcat\noexpand\first@char A\else
      \def\first@char{}%
    \fi
    \macc@nested@a\relax111{\first@char}%
  \fi
  \endgroup
}
\DeclareMathOperator*{\argmin}{\arg\!\min}
\newcommand{\real}{{\rm{I\hspace{-.75mm}R}}}
\renewcommand*{~}{\relax\ifmmode\sim\else\nobreakspace{}\fi}%GET TILDE FOR "IS DISTRIBUTED AS"
\newcommand{\as}{\overset{\tiny{\textrm{a.s.}}}{\to}}
\newcommand{\inP}{\mbox{$\,\stackrel{\scriptsize{\mbox{p}}}{\rightarrow}\,$}}
\newcommand{\Dbar}{\bar{D}}
\newcommand{\Ebar}{\bar{E}}
\newcommand{\Fbar}{\bar{F}}
\newcommand{\rhohat}{\hat{\rho}}
\newcommand{\sigmahat}{\hat{\sigma}}
\newcommand{\BFe}{\bm{e}}
\newcommand{\BFs}{\bm{s}}
\newcommand{\BFx}{\bm{x}}
\newcommand{\BFE}{\bm{E}}
\newcommand{\BFG}{\bm{G}}
\newcommand{\BFX}{\bm{X}}
\newcommand{\BFS}{\bm{S}}
\newcommand{\BFY}{\bm{Y}}
\newcommand{\BFalpha}{\bm{\alpha}}
\newcommand{\BFphi}{\bm{\phi}}
\newcommand{\BFEbar}{\bar{\BFE}}
\newcommand{\BFGbar}{\bar{\BFG}}
\newcommand{\sfB}{\mathsf{B}}
\newcommand{\sfH}{\mathsf{H}}
\newcommand{\mcA}{\mathcal{A}}
\newcommand{\mcB}{\mathcal{B}}
\newcommand{\mcF}{\mathcal{F}}
\newcommand{\mcK}{\mathcal{K}}
\newcommand{\mcM}{\mathcal{M}}
\newcommand{\mcO}{\mathcal{O}}
\newcommand{\mcS}{\mathcal{S}}
\newcommand{\mcX}{\mathcal{X}}
\newcommand{\mcOtilde}{\widetilde{\mcO}}
\newcommand{\mbE}{\mathbb{E}}
\newcommand{\mbN}{\mathbb{N}}
\newcommand{\mbP}{\mathbb{P}}
    \newcommand{\TD}{\nabla}
    \newtheorem{theorem}{Theorem}[section]
    \newtheorem{lemma}[theorem]{Lemma}
    \newtheorem{corollary}[theorem]{Corollary}
    \newtheorem{proposition}[theorem]{Proposition}
    \newtheorem{assumption}{Assumption}
    \newtheorem{definition}[theorem]{Definition}
    \newtheorem{example}[theorem]{Example}
    \newtheorem{remark}{Remark}
\newcommand{\revise}[1]{{\color{black}{#1}\color{black}}}
\begin{document}
		
			\title{\bf Complexity of Zeroth- and First-order \\ Stochastic Trust-Region Algorithms}
			\author{Yunsoo Ha, Sara Shashaani\footnote{Fitts Dept. of Industrial \& System Eng., North Carolina State University, Raleigh, NC 27695, USA} , and Raghu Pasupathy\footnote{ Dept. of Statistics, Purdue University, West Lafayette, IN 47907, USA } }%\\ 
%			{\scriptsize $^a$ Fitts Dept. of Industrial \& System Eng., North Carolina State University, Raleigh, NC 27695, USA} \\ 
%			\scriptsize $^b$ Dept. of Statistics, Purdue University, West Lafayette, IN 47907, USA } }
			\date{}
			\maketitle
			\bigskip
		
	\begin{abstract}
\emph{Model update} (MU) and \emph{candidate evaluation} (CE) are classical steps incorporated inside many stochastic trust-region (TR) algorithms. The sampling effort exerted within these steps, often decided with the aim of controlling model error, largely determines a stochastic TR algorithm's sample complexity. Given that MU and CE are amenable to variance reduction, we investigate the effect of  incorporating common random numbers (CRN) within MU and CE on complexity. Using ASTRO and ASTRO-DF as prototype first-order and zeroth-order families of algorithms, we demonstrate that CRN's effectiveness leads to a range of complexities depending on sample-path regularity and the oracle order. For instance, we find that in first-order oracle settings with smooth sample paths, CRN's effect is pronounced---ASTRO with CRN achieves $\mcOtilde(\varepsilon^{-2})$ a.s.\ sample complexity compared to $\mcOtilde(\varepsilon^{-6})$ a.s.\ in the generic no-CRN setting. By contrast, CRN's effect is muted when the sample paths are not Lipschitz, with the sample complexity improving from $\mcOtilde(\varepsilon^{-6})$ a.s.\ to $\mcOtilde(\varepsilon^{-5})$ and $\mcOtilde(\varepsilon^{-4})$ a.s.\ in the zeroth- and first-order settings, respectively. Since our results imply that improvements in complexity are largely inherited from generic aspects of variance reduction,  e.g., finite-differencing for zeroth-order settings and sample-path smoothness for first-order settings within MU, we anticipate similar trends in other contexts. 

	\end{abstract}
			
	\noindent%
	{\it Keywords:} stochastic optimization, efficiency, variance reduction, sample-path structure

	%\newpage
	%\spacingset{1.5} % DON'T change the spacing!

% note to associate editor 

\section{Introduction}
We consider optimization problems having the form 
\begin{equation}
    \min_{\BFx\in \real^d} f(\BFx) := \mbE[F(\BFx,\xi)] = \int_\Xi F(\BFx,\xi)\,  P(\mbox{\rm d} \xi),\label{eq:problem}
\end{equation}
where $f:\real^d \rightarrow \real$ (not necessarily convex) is %twice continuously differentiable, smooth, and 
bounded from below, the random element $\xi: \Omega \to \Xi$, and $F:\real^d\times\Xi \rightarrow \real$ is a map with pointwise (in $\BFx$) finite variance. (Formally, $Y(\xi) := F(\cdot,\xi)$ is a function-valued map of $\xi$ whose distribution is the push-forward of the distribution of $\xi$ by $Y$. We do not go into details about the measurable spaces associated with the domain and range of the map $Y$ since they will never enter the stage going forward. See~\cite{1999bil} for a formal treatment.)  The following \emph{standing assumptions} on the smoothness of $f$ and the variance of the integrand $F$ hold throughout the paper: 
\begin{assumption}[Smooth $f$]\label{assum:lipschitz}
    The function $f$ is continuously differentiable in $\real^d$, and $\TD f$ is $\kappa_{Lg}$-Lipschitz, i.e., $\|\TD f(\BFx_1)-\TD f(\BFx_2)\|\leq\kappa_{Lg}\|\BFx_1-\BFx_2\|,\ \forall\BFx_1,\BFx_2\in\real^d.$
\end{assumption} 

\begin{assumption}[Finite Variance]\label{assum:varcont}
    The variance function $$\sigma_F^2(\BFx):=\mathrm{Var}(F(\BFx,\xi)) = \int_{\Xi} (F(\BFx,\xi) - f(
    \BFx))^2 \, P(\mathrm{d}\xi)$$ is finite, that is, $\sigma^2_F(\BFx) < \infty$ for each $\BFx \in \real^d$.
\end{assumption} 

%\begin{remark} Assumption~\ref{assum:varcont} imposes only pointwise finiteness on $\sigma^2_F(\cdot)$; further stringency will be assumed as needed.
%\end{remark}

Two popular flavors of~\eqref{eq:problem}, loosely called \emph{derivative-based} (or \emph{first-order}), and \emph{derivative-free} (or \emph{zeroth-order}), are of interest in this paper, and reflect the extent of information on $F$ that is available to a solution algorithm. In the first-order context, an algorithm has access to a \emph{first-order stochastic oracle}, i.e., a map $(\BFx,\xi) \mapsto (F(\BFx,\xi), \BFG(\BFx,\xi))$ satisfying $\mbE[F(\cdot,\xi)] = f(\cdot)$ and $\mbE[\BFG(\cdot,\xi)] = \TD f(\cdot)$. Loosely, this means that the simulation oracle, when ``executed at'' $(\BFx,\xi)$, returns unbiased estimates of the random function and gradient values $(F(\BFx,\xi), \BFG(\BFx,\xi))$. Analogously, in the derivative-free context, an algorithm has access to a \emph{zeroth-order stochastic oracle}, i.e., a map $(\BFx,\xi) \mapsto F(\BFx,\xi)$ satisfying $\mbE[F(\cdot,\xi)] =  f(\cdot)$. (We use the same notation to denote both the random element $\xi$ and its realization. This is customary for convenience, and the context of our discussion should remove all ambiguity.) 

\subsection{Common Random Numbers} All iterative solution algorithms for~\eqref{eq:problem} generate a random sequence of iterates $\{\BFX_k\}_{k \geq 1}$ with some rigorous guarantee, e.g., $\liminf_k \| \TD f(\BFX_k) \| \to 0$ almost surely or in probability. Importantly, these algorithms make the twin decisions regarding direction and step-length calculation at each iteration $k$ by observing one or both \emph{sample-path approximations} \begin{equation}\label{FbarGbar} \Fbar(\BFx,n)=\frac{1}{n}\sum_{i=1}^n F(\BFx,\xi_i) ;\quad   \BFGbar(\BFx,n)  =\frac{1}{n}\sum_{i=1}^n \BFG(\BFx,\xi_i),\end{equation} with corresponding variance estimators
\begin{align*}
\sigmahat_F^2(\BFx,n) :=& \frac{1}{n}\sum_{j=1}^n\left(F(\BFx,\xi_i)-\Fbar(\BFx,n)\right)^2 \mbox{ and }\\
\sigmahat_{\BFG}^2(\BFx,n) := & \text{Tr}\left(\frac{1}{n}\sum_{j=1}^n\left(\BFG(\BFx,\xi_i)-\BFGbar(\BFx,n)\right)\left(\BFG(\BFx,\xi_i)-\BFGbar(\BFx,n)\right)^\intercal\right),
\end{align*}
%$\sigmahat_F^2(\BFx,n) :=\frac{1}{n}\sum_{j=1}^n\left(F(\BFx,\xi_i)-\Fbar(\BFx,n)\right)^2$ and  $$\sigmahat_{\BFG}^2(\BFx,n) :=\text{Tr}\left(\frac{1}{n}\sum_{j=1}^n\left(\BFG(\BFx,\xi_i)-\BFGbar(\BFx,n)\right)\left(\BFG(\BFx,\xi_i)-\BFGbar(\BFx,n)\right)^\intercal\right),$$ 
in which $\text{Tr}(\cdot)$ denotes the trace of a matrix.
%\begin{equation}\label{stdevs} \hat{\sigma}^2_F(\BFx,n)=\frac{1}{n}\sum_{i=1}^n (F(\BFx,\xi_i) - \Fbar(\BFx,n))^2; \quad  \hat{\sigma}^2_{\BFG}(\BFx,n)=\frac{1}{n}\sum_{i=1}^n \|\BFG(\BFx,\xi_i) - \BFGbar(\BFx,n)\|^2.\end{equation} \sara{not trace of the covariance matrix?} 
For example, the classical stochastic gradient descent algorithm (SGD)~\cite{bottou2018optimization} fixes the sample size $n$ and observes $\BFGbar(\BFX_k,n)$ at the $k$-th iterate $\BFX_k$ to determine the subsequent iterate $\BFX_{k+1} = \BFX_k -\eta_k \BFGbar(\BFX_k,n)$, where $\{\eta_k\}_{k \geq 1}$ is a sequence of predetermined step lengths. Variants of line search~\cite{paquette2020stochastic}, on the other hand, are more deliberative about step-length and observe $\BFGbar$ and/or $\Fbar$ at several locations before deciding $\BFX_{k+1}.$ Modern trust region (TR) algorithms~\cite{chen2018storm,Sara2018ASTRO} are the most elaborate since they build a local model of $f$ around the incumbent iterate $\BFX_k$ to aid deciding the direction and step length towards the subsequent iterate $\BFX_{k+1}$. 

Common Random Numbers (CRN) is a variance reduction technique~\cite[Chapter 9]{2013nelpei} that specifies how the random numbers $\xi_i, i=1,2,\ldots,n$ should be handled when constructing $\Fbar$ and $\BFGbar$ in~\eqref{FbarGbar}. To make this clear, let's consider a simple algorithm framework such as SAA~\cite{2009shadenrus} which uses a fixed sample size $n$ to observe $\Fbar(\cdot,n)$ and $\BFGbar(\cdot,n)$ at each point  in the iterate sequence $\{\BFX_k\}_{k \geq 1}$ by calling a first-order stochastic oracle. When constructing $\Fbar(\cdot,n)$ and $\BFGbar(\cdot,n)$, SAA has at least the following two options:
\begin{align}\label{crn-nocrn} \Fbar(\BFX_k,n) &=\frac{1}{n}\sum_{i=1}^n F(\BFX_k,\xi_i),\quad  \BFGbar(\BFX_k,n)  =\frac{1}{n}\sum_{i=1}^n \BFG(\BFX_k,\xi_i); \tag{CRN} \\
\Fbar(\BFX_k,n) &=\frac{1}{n}\sum_{i=1}^n F(\BFX_k,\xi_{ki}),\quad  \BFGbar(\BFX_k,n)  =\frac{1}{n}\sum_{i=1}^n \BFG(\BFX_k,\xi_{ki}) \tag{no-CRN};
\end{align} In (CRN) above, notice that the random numbers used in constructing $\Fbar$ and $\BFGbar$ are the \emph{same for every iterate} $\BFX_k, k \geq 1$. In other words, the random numbers used to construct the function and gradient estimates are held fixed as an SAA algorithm traverses across the search space. Whereas, in (no-CRN) above, the random numbers can be iterate-dependent, as might happen when $\xi_{ki}, k=1,2,\ldots$ are chosen to be mutually independent for fixed $i$. 

\revise{\begin{remark}[Applicability of CRN]
    In simulation optimization settings where the simulationist has control on random number streams, CRN is  implementable with some effort. In machine learning and empirical settings, CRN often implies using the same data points when evaluating the loss function at different parameter values. Some settings do not allow CRN e.g., when there is an endogenous source, or in quantum computing applications where a path is not ``repeatable.''
\end{remark}}

% \begin{remark} CRN attempts to positively correlate observations used in function and gradient estimation \emph{across points}. This is different from \emph{antithetic variates}~\cite{yang1996combining} which attempts to negatively correlate $F(\BFX_i, \xi_{ij}), j=1,2,\ldots,n$ for fixed $i$, in an attempt to reduce the variance of $\Fbar(\BFX_i,n)$. \sara{could remove to save space}
% \end{remark} 

\subsection{Questions Posed}\label{sec:qposed} CRN as a variance reduction technique has long been known to be effective in many settings, e.g., ranking and selection~\cite{2013nelpei}, derivative estimation~\cite{fu2006gradient}, and stochastic approximation~\cite{kleinman1999simulation}. This, in combination with CRN's direct effect on the smoothness of the estimators $\Fbar(\cdot,n)$ and $\BFGbar(\cdot,n)$ (see Figure~\ref{fig:randomfield}), prompt the broad questions we consider in this paper:
\begin{enumerate} \item[Q.1] Does CRN affect the complexity of an algorithm devised to solve~\eqref{eq:problem}? \item[Q.2] Does CRN's effectiveness vary between first- and zeroth-order stochastic oracles, and as a function of sample-path regularity properties? %such as continuity and differentiability? 
\end{enumerate} 
The question in Q.1 has recently gained attention, for instance in~\cite{arjevani2023lower}, as part of deriving minimax type complexity lower bounds for smooth stochastic optimization with a first-order oracle (see Section~\ref{sec:analysis} for further discussion). We seek to gain a deeper understanding of how CRN can be leveraged in algorithm design, and in particular, how oracle order and the regularity of sample-paths affect CRN's potency. 
\begin{figure}%{wrapfigure}[17]{r}{0.57\textwidth}%[tbh]
    \centering
    %\vspace{-.16in}
     \fbox{ \includegraphics[width=0.7\textwidth]{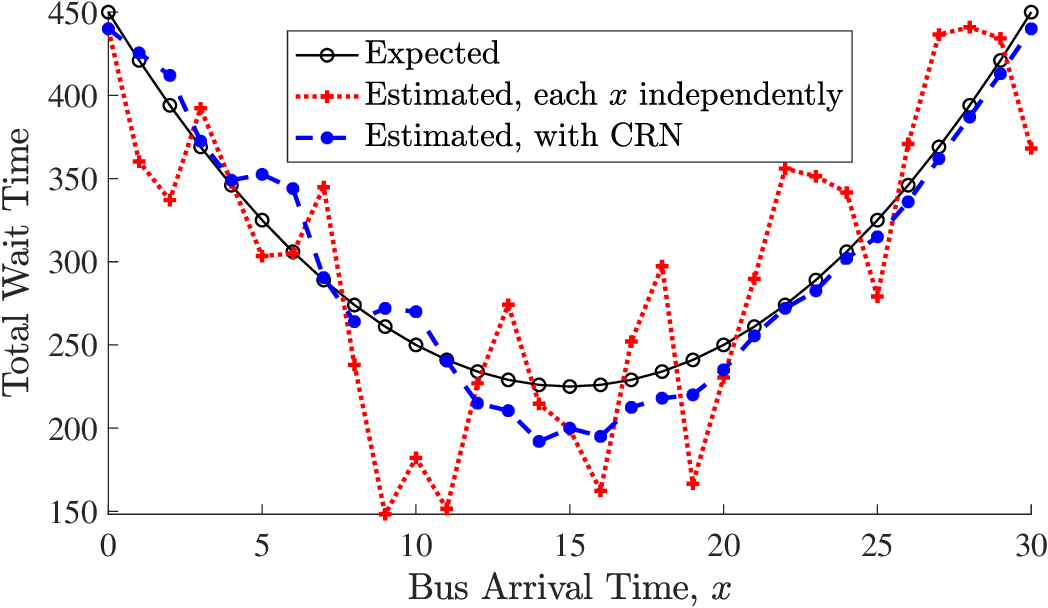}}   
    \caption{ %In the one-bus scheduling problem, the expected total wait time has one local minimizer that equals the global minimizer. 
    An example problem adapted from~\cite{2022raghunpastaa}, to estimate the expected waiting time $\mbE[F(x,\xi)]$ of bus passengers arriving according to a Poisson process $(\xi)$ as a function of bus schedule $x$ in a fixed time interval $[0,30]$. Notice that the estimated wait time $\Fbar(x,n)$ with CRN better retains the expected function's smoothness property.}%blue curve represents the estimated wait time $\Fbar(\BFx,n)$ generated with CRN and the red curve represents the estimated wait time $\Fbar(\BFx,n)$ generated with independent sampling. }
    \label{fig:randomfield}\vspace{-2 em}
\end{figure} 

\revise{The questions Q.1 and Q.2 have broad implications to algorithm design in various contexts, but we consider them within the context of a family of stochastic TR algorithms called ASTRO(-DF)~\cite{Sara2018ASTRO,vasquez2019astro,ha2023jsim}. While our chosen scope of analysis is narrow, three observations are relevant especially as they relate to extension to other contexts. First, stochastic TR algorithms, with their classical \emph{model update} (MU) and \emph{candidate evaluation} (CE) steps, all present opportunities for variance reduction through CRN. Indeed, most stochastic TR algorithms, including STRONG~\cite{STRONG}, 
STORM~\cite{chen2018storm,blanchet2019convergence}, ASTRO(-DF), TRACE~\cite{curtis2023worst}, and TRiSH~\cite{curtis2019stochastic}, include some variation of the following four steps in each iteration:} \begin{enumerate} \item[(MU)] (model update) a model of the objective function  $f$ is constructed (by appropriately calling the provided oracle) within a ``trust region,'' usually an $L_2$ ball of radius $\Delta_k$ centered around the incumbent iterate $\BFX_k$; \item[(SM)] (subproblem minimization) the constructed local model of $f$ is approximately minimized within the TR to yield a candidate point $\BFX_k^{\text{s}}$; \item[(CE)] (candidate evaluation) the candidate point $\BFX_k^{\text{s}}$ is accepted or rejected based on a sufficient reduction test; and \item[(TM)] (TR management) if accepted, $\BFX_k^{\text{s}}$ replaces $\BFX_k$ as the subsequent incumbent and the TR radius $\Delta_k$ is increased (or stays the same) as a vote of confidence in the model; however, if rejected, the incumbent remains unchanged during the subsequent iteration, and the TR radius shrinks by a factor in an attempt to construct a better model in the subsequent iteration.\end{enumerate} \revise{Of the above, the work done within the MU and CE steps especially affect sample complexity calculations, while also being amenable to CRN incorporation. For example, we will see later that ASTRO(-DF) has the option of incorporating CRN when observing estimates of $f$ in the service of constructing a model within the (MU) step; and when computing what is called the \emph{success ratio} in the (CE) step by implicitly comparing function estimates at the incumbent and candidate points. Second, each algorithm, by virtue of its unique internal mechanics, comes with different opportunities for incorporating CRN. Any meaningful directives on variance reduction should thus entail a narrow investigation. Third, our analysis reveals that the derived complexities are largely due to generic reasons, leading us to believe that similar if not identical trends might be realizable in other TR contexts. For instance, in the zeroth-order case, algorithm complexity gains come from using CRN when computing finite-differences, and in the first-order case, through the invocation of the sample-path smoothness. }

% \sara{Two things to mention: 1) CRN helps zeroth order cases via variance reduction, but it helps first-order cases by exploiting the random field properties. 2) CRN is not allowed in STROM context because we do not have a proof of convergence or complexity with dependence between iterates there. }

\subsection{Summary of Insight and Results}
We provide a brief account of the insight and contributions of this paper. The first of these (see Section~\ref{sec:comp}) pertains to questions Q.1 and Q.2. The second and third contributions (see Sections~\ref{sec:analysis} and~\ref{sec:varred}) are either incidental or play a supporting role to the paper's focus. 
\subsubsection{CRN, Sample-path Structure, and Oracle Order}\label{sec:comp} Our principal instrument of performance assessment is the \emph{sample complexity} of an algorithm. Loosely, a consistent algorithm is said to exhibit $\mcOtilde(\varepsilon^{-q})$ a.s. sample complexity if the number of oracle calls expended to first identify an iterate $\BFX_k$ satisfying $\| \TD f(\BFX_k) \| \leq \varepsilon$ is $\mcO(\varepsilon^{-q}\lambda(\varepsilon^{-1}))$, where $\lambda(\cdot)$ is a slowly varying function. (See Section~\ref{sec:definitions} for precise definitions of slowly varying sequences, order, complexity, and other terms.) Table~\ref{tab:synopsis} summarizes the  insight implied by Theorem~\ref{thm:aswc}, which answers the main questions posed in the paper about how sample-path structure, oracle order, and the use of CRN interact to affect sample complexity. \revise{In Table~\ref{tab:synopsis}, the parameter sequence $\{\lambda_k, k \geq 1\}$ is an inflation factor introduced to guarantee almost sure convergence. Such an inflation factor is typical in sequential estimation settings~\cite{ghosh:sequential1997} and can usually be dispensed if a weaker form of convergence, e.g., in probability, should suffice.} The following aspects of Table~\ref{tab:synopsis} are salient. \begin{enumerate} \item[(a)] When CRN is not in use, the underlying sample-path structure and the oracle order do not matter, and the sample complexity is $\mcOtilde(\varepsilon^{-6})$ across the board. This is consistent with the single existing result in the literature~\cite{miaolan2023sample}. \item[(b)] CRN makes the most difference when there is maximum sample-path structure, and when using a first-order stochastic oracle. Under these settings, the complexity improves from $\mcOtilde(\varepsilon^{-6})$ to $\mcOtilde(\varepsilon^{-2})$. The latter is a logarithmic factor away from the best achievable complexity in the noiseless case~\cite{2013ghalan,arjevani2023lower}. \item[(c)] When CRN is in use, there is a steady improvement (from $\mcOtilde(\varepsilon^{-5})$ to $\mcOtilde(\varepsilon^{-2})$) in sample complexity as one progresses from a context where the sample-paths are not Lipschitz continuous and a zeroth-order oracle is in use, to one where the sample-paths are smooth and a first-order oracle is in use. \item[(d)] The difference in the sample complexities between the zeroth-order and first-order contexts is more pronounced with additional sample-path structure.\end{enumerate} 
\begin{table}
\centering
\caption{A summary of prescribed adaptive sample sizes ($N_k$) and $\varepsilon$-optimality sample complexity ($W_\varepsilon$) in ASTRO(-DF) as a function of CRN use and sample-path structure specified under the assumptions column (in addition to the standing assumptions). The quantity $\Delta_k$ refers to the TR radius at the end of the $k$-th iteration, and $\lambda_k = \mcOtilde(1)$. Exact expressions for $N_k$ can be found elsewhere in the paper. }

\begin{tabular}{c l c c c c}
\toprule
Alg. & Case & CRN & Assumptions & Prescr. $N_k$ &  Complexity, $W_\varepsilon$\\
\midrule
% \\

\multirow{3}{*}{\rotatebox[origin=c]{90}{\texttt{ASTRO-DF}}} \multirow{3}{*}{\rotatebox[origin=c]{90}{\scriptsize Zeroth-order}}\multirow{3}{*}{\rotatebox[origin=c]{90}{\scriptsize Algorithm~\ref{alg:ASTRODF}}}  & A-0 & \ding{55} & N/A 
& $\mcO(\Delta_k^{-4}\lambda_k)$  & $\mcOtilde(\varepsilon^{-6})$ a.s.\\ %\cline{2-4} 
 & B-0  & \ding{52}          & 
 %H\"{o}lder spatial correlation 
 Lipschitz $\sigma_F^2$ / $\rho_F$ (A.\ref{assum:holder}) & 
 $\mcO(\Delta_k^{-3}\lambda_k)$     & $\mcOtilde(\varepsilon^{-5})$ a.s.  \\ 
 
 & C-0 & \ding{52}          & Lipschitz $F$ 
 (A.\ref{assum:lip-F}) & 
 $\mcO(\Delta_k^{-2}\lambda_k)$   & $\mcOtilde(\varepsilon^{-4})$  a.s.    \\

\\

\hline

\\

\multirow{3}{*}{\rotatebox[origin=c]{90}{\texttt{ASTRO}}} \multirow{3}{*}{\rotatebox[origin=c]{90}{\scriptsize First-order}}\multirow{3}{*}{\rotatebox[origin=c]{90}{\scriptsize Algorithm~\ref{alg:ASTRO}}} 
& A-1 & \ding{55} & N/A 
& $\mcO(\Delta_k^{-4}\lambda_k)$    & $\mcOtilde(\varepsilon^{-6})$ a.s. \\ %\cline{2-4} 

& B-1 & \ding{52}          & 
%H\"{o}lder spatial correlation 
Lipschitz $\sigma_F^2$ / $\rho_F$ (A.\ref{assum:holder}) & 
$\mcO(\Delta_k^{-2}\lambda_k)$    & $\mcOtilde(\varepsilon^{-4})$ a.s.   \\ 

 & C-1 & \ding{52}          & Smooth $F$ (A.\ref{assum:lipschitzgradpaths}) & 
 $\mcO(\lambda_k)$     & $\mcOtilde(\varepsilon^{-2})$ a.s.    \\

\\

 \bottomrule
\end{tabular}\label{tab:synopsis}
\end{table}
   %  \begin{table}[!htbp]\vspace{-1em}\caption{{\small Sample complexity rates for ASTRO(-DF)}.} \label{tab:complexity}
   %  \centering
   %  \ra{0.7}
   %  \begin{tabular}{@{}cccccc@{}}\toprule
   %  & \multicolumn{2}{c}{First-order (ASTRO)} & \phantom{abc}& \multicolumn{2}{c}{Zeroth-order (ASTRO-DF)} \\
   %  \cmidrule{2-3} \cmidrule{5-6} 
   %  % & \multicolumn{5}{c}{Sample-path Structure}  \\ \cmidrule{2-6}
   %  &  Smooth $F$ & Lipschitz $\sigma_F^2$ / $\rho_F$  && Liptschitz $F$ & Lipschitz $\sigma_F^2$ / $\rho_F$ \\ \hline\hline
   %  \multirow{2}{*}{CRN} & \multirow{2}{*}{$\mcOtilde{(\varepsilon^{-2})}$} & \multirow{2}{*}{$\mcOtilde{(\varepsilon^{-4})}$} && \multirow{2}{*}{$\mcOtilde{(\varepsilon^{-4})}$} & \multirow{2}{*}{$\mcOtilde{(\varepsilon^{-5})}$}\\
   %  &&&&&\\
   % no-CRN & $\mcOtilde{(\varepsilon^{-6})}$ & $\mcOtilde{(\varepsilon^{-6})}$ & & $\mcOtilde{(\varepsilon^{-6})}$ & $\mcOtilde{(\varepsilon^{-6})}$\\
   %  \bottomrule
   %  \end{tabular}
   %  \end{table}

The nuanced reported complexities in Table~\ref{tab:synopsis} may explain the seeming deviation between the good performance of TR algorithms in practice, and the reported complexity~\cite{miaolan2023sample} in the literature. Furthermore, the effects described in (a)--(d) are more a consequence of the interaction between CRN, sample-path structure, and oracle order, and less due to the specifics of ASTRO(-DF) algorithm dynamics. % or an artifact of our analysis methods. As evidence of this claim, the derived complexities in Table~\ref{tab:synopsis} for the zeroth-order case inherit largely from Theorem~\ref{thm:var-fd-crn}, which stands to reason given that local model construction within TR algorithms implicitly estimate (first or second) derivatives.   
\subsubsection{Analysis}\label{sec:analysis} Three aspects with respect to our analysis are important. \begin{enumerate} \item[(a)] By virtue of our need to quantify the relationship between CRN, sample-path structure, and oracle order, we take different routes in the zeroth-order and first-order contexts. In the former case, our analysis is largely consistent with the existing literature where complexities are derived essentially through assumptions on sample-path moments. (Other assumptions include \emph{bounded noise}~\cite{LB,sun2022trust} and noise decay with high probability ~\cite{blanchet2019convergence,curtis2019stochastic,cartis2020strong}.) In the first-order context with CRN (Case~\ref{eq:exactnk-C} in Table~\ref{tab:synopsis}), by contrast, we exploit sample-path regularity directly, leading to the derived $\mcOtilde(\varepsilon^{-2}) \mbox{ a.s.}$ sample complexity using a slow (logarithmic) increase in sample sizes. We are unaware of other TR calculations with comparable sample complexity, although~\cite{rinaldi2023stochastic} also work directly with sample-paths through tail-probability assumptions. 
\item[(b)] It is interesting to compare the $\mcOtilde(\varepsilon^{-2}) \mbox{ a.s.}$ complexity for~\eqref{eq:exactnk-C} (see Table~\ref{tab:synopsis}) derived here with the optimal lower bounds $\mcO(\varepsilon^{-4}) $ and $\mcO(\varepsilon^{-3}) $ (under the mean-squared smoothness assumption) for stochastic optimization %with variance-reduction schemes derived 
in~\cite{arjevani2023lower}, and realized in~\cite{2020zhopangu,2018fanetal,2013ghalan}. Consistent with comments made in Section 6 of~\cite{arjevani2023lower}, the improved complexities we report here (for~\eqref{eq:exactnk-C}) arise as a result of making smoothness assumptions directly on the sample-paths, alongside the use of CRN. Moreover, our analysis reveals that $\mcOtilde(\varepsilon^{-2}) \mbox{ a.s.}$ arises roughly as the product of the lower bound $\mcO(\varepsilon^{-2})$ in the noiseless case and a ``price'' $\mcOtilde(1)$ of adaptive sampling. 
 %\item[(b)] The analysis of TR methods for these steps is often based on the assumption that the stochastic noise is bounded or that it can diminish or be sufficiently small with high enough probability. \sara{add Nocedal's latest paper--added}  %\item[(b)] \sara{SG achieves $\varepsilon^{-4}$ under standard assumptions, so we are beating SG} is achieved by classic algorithms such as stochastic gradient (SG) with independent sampling, whereas the corresponding complexity in Table~\ref{tab:synopsis} is $\tilde{\mcO}(\varepsilon^{-6})$. This large discrepancy in complexities seems to arise from the standard manner in which the sufficient reduction test (Step (c) in Section~\ref{sec:trust}) is conducted in many TR algorithms using a quality stipulation on both the function and gradient estimates.\sara{remove?}\sara{maybe say sth about wp1 complexity results? \cite{blanchet2019convergence,jin2023sample} only show in expectation or with high probability results; also maybe worth noting CRN must be allowed in algorithms; some classes of algorithms will not allow that.} 
 \item[(c)] On our way to establishing Theorem~\ref{thm:aswc}, we provide a consistency proof for ASTRO(-DF) under weak conditions. Our consistency proof does not directly stipulate \emph{fully linear} models where the model error $\|\TD M_k - \TD f_k\|$ resulting from MU is of the order of the TR radius $\Delta$, i.e., $\|\TD M_k - \TD f_k\|= \mcO(\Delta_k)$. In~\eqref{eq:exactnk-C}, for instance, we show consistency and canonical convergence rates  only with $\|\TD M_k - \TD f_k\|\as 0$. 
 \revise{\item[(d)] Could similar CRN benefits be realized in other stochastic trust-region frameworks? Such gains may be possible if an algorithm allows control at the sample-path level and the analysis provides some assurance on the decay of the tail probability of the \emph{differences} in the function value estimates, e.g., as in~\cite{rinaldi2023stochastic} or in ASTRO-DF. In settings that use probabilistic models (STORM, TRiSH), more needs to be said about the models in use before CRN's usage and effects can be assessed.}
\end{enumerate}

\subsubsection{Variance Reduction Theorem}\label{sec:varred} Several of the results in Table~\ref{tab:synopsis} directly or indirectly use Theorem~\ref{thm:var-fd-crn}, which characterizes the gains due to variance reduction by CRN when computing a finite-difference approximation. Theorem~\ref{thm:var-fd-crn}, for instance, characterizes the error when performing finite-differencing with and without CRN, and as a function of underlying sample-path structure. A variation of Theorem~\ref{thm:var-fd-crn} appears in~\cite{glasserman2004monte}.

\subsection{Paper Organization} The rest of the paper is organized as follows. Section~\ref{sec:mathprelim} provides mathematical preliminaries, followed by Section~\ref{sec:algdesc} which provides a description of ASTRO(-DF) and the balance condition in preparation for subsequent sections. The balance condition is used in the consistency proofs of Section~\ref{sec:conv}, and finally for the complexity calculations in Section~\ref{sec:complexity}.

\section{Preliminaries}\label{sec:mathprelim}
In what follows, we provide notation, key definitions, key assumptions and supporting results that will be invoked throughout the paper.
\subsection{Notation and Terminology}
We use bold font for vectors; for example, $\BFx=(x_1,x_2,\cdots,x_d)\in\real^d$ denotes a $d$-dimensional real-valued vector, and $\BFe^{i} \in \real^d$ for $i=1,\dots,d$ denotes the unit  vector in the $i$-th coordinate. We use calligraphic fonts for sets and sans serif fonts for matrices. Our default norm operator $\|\cdot\|$ is an $L_2$ norm in the Euclidean space. We use  $a\wedge b:=\min\{a,b\}$ and $a\vee b:=\max\{a,b\}$. We denote $\mcB(\BFx^{0};\Delta)=\{\BFx\in\real^d:\|\BFx-\BFx^{0}\|_2\leq\Delta\}$ as the closed ball of radius $\Delta>0$ with center $\BFx^{0}$. For a sequence of sets $\{\mcA_n\}$, the set $\{\mcA_n \ \text{i.o.}\}$ denotes $\limsup_{n} \mcA_n$. For sequences $\{a_k\}$ and $\{b_k\}$ of nonnegative reals, $a_k~b_k$ denotes $\lim_{k\rightarrow\infty}a_k/b_k=1$. 
%We say $f(\BFx)=\mcO(g(\BFx))$ if there exist positive numbers $\varepsilon$ and $m$ such that $|f(\BFx)|\le mg(\BFx)$ for all $\BFx$ with $0 < |\BFx| < \varepsilon$. %$f(\BFx) = \tilde{\mcO}(g(\BFx))$ has a similar definition but with an additional $(-\log\BFx)$ on the RHS. 
%$\mcC^2$ denotes the set of all twice continuously differentiable functions on $\real^d$. 
We use capital letters for random scalars and vectors. %$X\indep Y$ denotes independent random variables $X$ and $Y$. 
For a sequence of random vectors $\{\BFX_k\}_{k\in\mbN}$, $\BFX_k \as \BFX$ and $\BFX_k \inP \BFX$ 
denotes almost sure and in probability 
convergence. ``iid" abbreviates independent and identically distributed and %``wp1" abbreviates with probability 1. For economy, we write 
``ASTRO(-DF)'' abbreviates ``ASTRO and ASTRO-DF.'' %We use the phrases ``work complexity'' and ``sample complexity'' interchangeably.

%, and ``i.o." abbreviates infinitely often.

\subsection{Key Definitions}\label{sec:definitions}

\begin{definition}[Slowly Varying Sequence and Function]\label{def:slowly} A diverging positive-valued sequence $\{\lambda_k\}_{k \geq 1}$ is said to be slowly-varying at infinity, denoted $a_k = \mcOtilde(1)$, if the fractional change in the sequence is $o(1/k)$, i.e., $ \lim_{k \to \infty} k \times \left(\frac{\lambda_{k+1} - \lambda_k}{\lambda_k}\right) = 0.$ The sequences $ \lambda_k = \log k$, $\lambda_k = \log (\log k)$ and $\lambda_k = (\log k)^r, r > 0$ can be shown to be slowly varying sequences. Likewise, a positive-valued diverging function $\lambda(\cdot)$ defined on $[A,\infty)$ where $A>0$ is said to be slowly varying if $\lim_{t \to \infty} \frac{\lambda(xt)}{\lambda(x)} =1$ for each $x$. Common examples of slowly varying functions include $\log x, \log (\log x)$, and $(\log x)^r, r>0$. See~\cite{1973galsen} for more on regularly varying and slowly varying sequences. 
\end{definition}

\begin{definition}[Orders $\mcO$ and $\mcOtilde$] %and probabilistic orders $\mcO_p$, $\mcOptilde$]
\label{defn:allOh} A positive-valued sequence $\{a_k\}_{k \geq 1}$ is $\mcO(1)$ if $\limsup_{k \to \infty} a_k < \infty$. For positive-valued sequences $\{a_k\}_{k \geq 1}$ and $\{b_k\}_{k \geq 1}$, we say that $a_k = \mcO(b_k)$ if $a_k/b_k=\mcO(1)$. A sequence $\{a_k\}_{k \geq 1},\ a_k \to \infty$ is said to be $\mcOtilde(1)$ if upon dividing by a slowly varying sequence $\{\lambda_k\}_{k \geq 1}$ it becomes $\mcO(1)$, that is, $a_k/\lambda_k = \mcO(1)$; we also write $a_k = \mcOtilde(b_k)$ to mean $a_k/b_k = \mcOtilde(1)$.
% Analogously, a positive-valued random sequence $\{A_k\}_{k \geq 1}$ is $\mcO_p(1)$ if it is probabilistically bounded, i.e., for any given $\delta>0$, we can find $M(\delta)$ such that $\mbP\left(A_k \leq M(\delta)\right) \leq \delta$. We say that $A_k = \mcO_p(B_k)$ if $A_k/B_k=\mcO_p(1)$. A positive-valued random sequence $\{A_k\}_{k \geq 1}$ satisfying $A_k\inP \infty$ is (probabilistically) slowly varying at $\infty$ if upon dividing by a slowly varying deterministic sequence $\{\lambda_k\}_{k \geq 1}$ it is $\mcO_p(1)$, that is, $A_k/\lambda_k = \mcO_p(1)$.  We also write $A_k = \mcOptilde(B_k)$ to mean   $A_k/B_k = \mcOptilde(1)$.
\end{definition}

\begin{definition}[Consistency]\label{conscomp}
An iterative solution algorithm having access to either a stochastic zeroth- or first-order oracle is said to solve Problem~\eqref{eq:problem} if it generates a stochastic sequence of iterates $\{\BFX_k\}_{k\geq 1}$ that in some rigorous sense converges to a first-order critical point of $f$, i.e., a point $\BFx^*$ that satisfies $\| \TD f(\BFx^*) \| =0$. We call such an algorithm consistent if the generated sequence $\BFX_k \inP \BFx^*$, and 
strongly consistent if $\BFX_k \as \BFx^*$. \end{definition}  

\begin{definition}[Iteration Complexity and Sample Complexity] Let $\{\BFX_k\}_{k \geq 1}$ be a random iterate sequence on $(\Omega,\mcF)$, adapted to the filtration $\{\mcF_k\}_{k \geq 1}$. Suppose also that in evaluating the iterate $\BFX_k$, an algorithm expends $N_k$ calls to a stochastic oracle, where $N_k$ is an \emph{adaptive sample size}, that is, $N_k$ is $\mcF_k$-measurable. The algorithm is said to exhibit $\mcO(\varepsilon^{-q}) \mbox{ a.s.\ }$ \emph{iteration complexity} if there exists a well-defined random variable $\Lambda_T$ such that \begin{equation}\label{eq:itercomplexity} T_\varepsilon \leq \Lambda_T \,\varepsilon^{-q}; \quad T_\varepsilon := \inf\{k: \|\TD f(\BFX_k)\| \leq \varepsilon \}.\end{equation} Defining work done after $k$ iterations $W_k = \sum_{j=1}^k N_j,$ the algorithm 
exhibits $\mcO(\varepsilon^{-q})$ sample (or work) complexity if \begin{equation}\label{eq:workcomplexity} W_\varepsilon:=W_{T_\varepsilon} \leq \Lambda_W \, \varepsilon^{-q},\end{equation} where $\Lambda_W$ is a well-defined random variable. Similarly, the algorithm exhibits $\tilde{\mcO}(\varepsilon^{-q})$ sample complexity if there exists a well-defined random variable $\Lambda_W$ such that \begin{equation}\label{eq:workcomplexityrelaxed} W_{\varepsilon} \leq \Lambda_W \, \varepsilon^{-q} \,\lambda(\varepsilon^{-1}),\end{equation} where $\lambda(\cdot)$ is a slowly varying function (see Definition~\ref{def:slowly}). \end{definition} 

\subsection{Key Assumptions}

The following three assumptions are not standing assumptions but will be invoked as necessary.

\begin{assumption}[Locally Lipschitz Variance and H\"{o}lder Continuous Autocorrelation]\label{assum:holder}
The variance function $\sigma_F^2(\BFx),\ \BFx \in \real^d$ and the autocorrelation function $\rho_F(\BFx+\BFs;\BFx):=\mathrm{Corr}(F(\BFx+\BFs,\xi),F(\BFx,\xi)),\ \BFs \in \real^d$ are locally Lipschitz and locally H\"{o}lder continuous in $\real^d$, respectively, that is, there exist $L_\sigma,L_\rho>0$ such that for any $\BFs\in\real^d$ with small enough norm, $|\sigma_F^2(\BFx+\BFs)-\sigma_F^2(\BFx)| \leq L_{\sigma}\|\BFs\|$ and %$\rho_F(\BFs;\BFx)\geq 0$ and 
    $|\rho_F(\BFx+\BFs;\BFx)-\underbrace{\rho_F(\BFx;\BFx)}_{=1}|\leq L_\rho \|\BFs\|^\alpha$ for some $\alpha\revise{\geq 1}$.
\end{assumption}

\begin{assumption}[Lipschitz $F$]
    There exists $\kappa_{FL}(\xi)$ with $\mbE[|\kappa_{FL}|^{\revise{m}}(\xi)]<\infty$ \revise{for $m=2,3,\cdots$} such that $|F(\BFx_1,\xi)-F(\BFx_2,\xi)|\leq \kappa_{FL}(\xi)\|\BFx_1-\BFx_2\|$ for all $\BFx_1,\BFx_2\in\real^d$. \label{assum:lip-F}
\end{assumption}

\begin{assumption}[Smooth $F$]\label{assum:lipschitzgradpaths}
    The function $F(\cdot,\xi): \real^d \to \real$ is smooth for each $\xi\in\Xi$ with gradient denoted $\BFG(\cdot,\xi)$, that is, there exists $\kappa_{LG}(\xi)$ with $\mbE[\kappa_{LG}(\xi)]\leq \kappa_{uLG}$ such that $\|\BFG(\BFx_1,\xi)-\BFG(\BFx_2,\xi)\|\leq\kappa_{LG}(\xi)\|\BFx_1-\BFx_2\|$ for all $\BFx_1,\BFx_2\in\real^d$.%We refer to this property of sample paths as uniformly-Lipschitz smoothness.
\end{assumption}

\subsection{Supporting Results} We start with Bernsetin inequality for martingales that we use extensively in Section~\ref{sec:balancemain}. See~\cite{1999pen,2015fangraliu,1995van} for more on such inequalities.

\begin{lemma}[Bernstein's Inequality for Martingales]\label{lem:dependent-bernstein} Suppose $(\xi_i,\mcF_i)_{i =0,1,\ldots}$ is a martingale difference sequence on some probability space $(\revise{\Xi},\mcF,P)$, with $\xi_0=0$ and $\{\revise{\Xi},\emptyset\} = \mcF_0 \subseteq \mcF_1 \subseteq \mcF_2 \subseteq \cdots \subseteq \mcF_n \subseteq \mcF$ a sequence of increasing \revise{filtrations}, $\mbE\left[\xi_i \vert \mcF_{i-1}\right] = 0$, and $\mbE\left[ |\xi_i|^m \vert \mcF_{i-1} \right] \leq \frac{1}{2}\, m! \, b^{m-2} \sigma^2$ for  $b>0$ and $m=2,3,\cdots$.
Then for any $c>0$ and $n\in \mbN$, $$\mbP\left(\revise{\sum_{i=1}^n \xi_i} \geq nc \right) \leq \exp\left\{- \frac{nc^2}{2(bc + \sigma^2)} \right\}.$$
\end{lemma}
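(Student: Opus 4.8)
The plan is to run the classical Chernoff/exponential-moment argument adapted to the martingale setting, peeling off one increment at a time via the tower property. First I would fix $\theta \in (0, 1/b)$ and apply Markov's inequality to the exponentiated partial sum: $\mbP\!\left(\sum_{i=1}^n \xi_i \geq nc\right) \leq e^{-\theta n c}\,\mbE\!\left[\exp\!\left(\theta \sum_{i=1}^n \xi_i\right)\right]$. The entire problem then reduces to controlling the moment generating function $\mbE[\exp(\theta \sum_{i=1}^n \xi_i)]$, which I will bound by exploiting the filtration structure.

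The core of the argument is a per-increment bound on the conditional MGF. I would expand the exponential as a power series and use $\mbE[\xi_i \mid \mcF_{i-1}] = 0$ to kill the linear term, so that $\mbE[e^{\theta \xi_i}\mid \mcF_{i-1}] = 1 + \sum_{m \geq 2} \frac{\theta^m}{m!}\,\mbE[\xi_i^m \mid \mcF_{i-1}]$. Substituting the Bernstein moment hypothesis $|\mbE[\xi_i^m\mid\mcF_{i-1}]| \leq \mbE[|\xi_i|^m\mid\mcF_{i-1}] \leq \tfrac12 m!\, b^{m-2}\sigma^2$ cancels the factorials and leaves a geometric series, giving $\mbE[e^{\theta \xi_i}\mid\mcF_{i-1}] \leq 1 + \frac{\sigma^2\theta^2}{2(1-\theta b)} \leq \exp\!\left\{\frac{\sigma^2\theta^2}{2(1-\theta b)}\right\}$ for $\theta b < 1$, where the final step uses $1+x\leq e^x$.

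Next I would iterate this estimate through the filtration. Writing $S_k = \sum_{i=1}^k \xi_i$ and conditioning on $\mcF_{n-1}$, the factor $e^{\theta S_{n-1}}$ is $\mcF_{n-1}$-measurable and pulls out, so $\mbE[e^{\theta S_n}] = \mbE\!\left[e^{\theta S_{n-1}}\,\mbE[e^{\theta \xi_n}\mid\mcF_{n-1}]\right] \leq \exp\!\left\{\frac{\sigma^2\theta^2}{2(1-\theta b)}\right\}\mbE[e^{\theta S_{n-1}}]$. Induction then yields $\mbE[e^{\theta S_n}]\leq \exp\!\left\{\frac{n\sigma^2\theta^2}{2(1-\theta b)}\right\}$, and combining with the Markov step gives, for every admissible $\theta$, the bound $\exp\!\left\{-\theta n c + \frac{n\sigma^2\theta^2}{2(1-\theta b)}\right\}$. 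I would finish by minimizing the exponent over $\theta \in (0,1/b)$; the choice $\theta = c/(\sigma^2 + bc)$ makes $1-\theta b = \sigma^2/(\sigma^2+bc)$ and collapses the exponent to exactly $-nc^2/(2(bc+\sigma^2))$, which is the claimed bound.

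The main obstacle I anticipate is rigor around the power-series step: interchanging $\mbE[\,\cdot\mid\mcF_{i-1}]$ with the infinite sum and bounding $|\mbE[\xi_i^m\mid\mcF_{i-1}]|$ by $\mbE[|\xi_i|^m\mid\mcF_{i-1}]$ both rely on the Bernstein moment condition to supply absolute convergence; once that is secured, the geometric summation and the radius-of-convergence constraint $\theta b < 1$ are routine. A secondary point worth stating explicitly is that the optimizing $\theta$ indeed lies in $(0,1/b)$, since $c/(\sigma^2+bc) < 1/b$ is equivalent to the trivially true $bc < \sigma^2 + bc$.
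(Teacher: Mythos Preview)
Your proof is correct and is the standard self-contained Chernoff--Cram\'er argument for Bernstein-type martingale inequalities: the conditional MGF bound via the geometric series, the tower-property iteration, and the optimization over $\theta$ all go through exactly as you describe, and the choice $\theta=c/(\sigma^2+bc)$ indeed lands in $(0,1/b)$ and yields the stated exponent.

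The paper takes a different, much shorter route: it simply invokes a more general result from van de Geer, \emph{Empirical Process Theory} (p.~135), which allows an $\mcF_{i-1}$-measurable predictable bound $R_i$ in place of the constant $\sigma^2$ and gives the tail bound on the joint event $\{\sum\xi_i\geq nc\}\cap\{R_i\leq r\}$, and then specializes by taking $R_i=r=\sigma^2$. Your approach is more transparent and self-contained, while the paper's buys brevity at the cost of an external reference; the underlying mechanism (exponential tilting plus the Bernstein moment condition) is of course the same in both.
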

\revise{
\begin{proof}
    By \cite{geer2000empirical}[p.135], for a $\mcF_{i-1}$-measurable random variable $R_i$ that satisfies $\mbE\left[ |\xi_i|^m \vert \mcF_{i-1} \right] \leq \frac{1}{2}\, m! \, b^{m-2} R_i$, for any $c>0, r>0$ and $n\in \mbN$, we get $$\mbP\left(\revise{\sum_{i=1}^n \xi_i} \geq nc \mbox{ and } R_i \leq r \right) \leq \exp\left\{- \frac{nc^2}{2(bc + r)} \right\}.$$ The proof follows by choosing $R_i=r=\sigma^2$, which satisfies $\mbE\left[ |\xi_i|^2 \vert \mcF_{i-1} \right] \leq \sigma^2$ for $m=2$.  
\end{proof}
}

% \yunsoo{(Case C) If we additionally assume that $\kappa_{FL}(\xi)$ is uniformly bounded by $\kappa_{uFL}$ in Assumption \ref{assum:lip-F}, we have
% \begin{equation*}
%     \mathbb{E}[|E^0_{k,i}-E^s_{k,i}|^m | \mathcal{F}_{i-1}] \leq \mbE[\Delta^m \kappa^m_{FL}(\xi_i)|\mcF_{i-1}] \le (\Delta \kappa_{uFL})^m,
% \end{equation*}
% proving what we want.
% }
% \sara{only works for case C}\\
% \yunsoo{(Case B) we are okay with $b_f / \Delta_k$ instead of $b_f$ in equation (3.10). Hence, let us first assume that $\Delta \ge 1$. Then we can easily get $$\mbE[|E^0_{k,i}-E^s_{k,i}|^m\ \vert\ \mcF_{i-1}]\leq \frac{m !}{2} (2b)^{m-2}  (2\sqrt{\Delta}\sigma)^2,\ \forall m=2,3,\cdots,b>0.$$ Now let us assume that $\Delta < 1$. When $m=2$, we already have $\mbE[|E^0_{k,i}-E^s_{k,i}|^m\ \vert\ \mcF_{i-1}] \le \Delta \sigma^2$. For all $m \in \{3,4,\cdots\}$, we have 
% \begin{equation*}
% \begin{split}
%     \mbE[|E^0_{k,i}-E^s_{k,i}|^m\ \vert\ \mcF_{i-1}] &\leq \frac{m !}{2} \frac{(2b)^{m-2}}{\Delta} (2\sqrt{\Delta}\sigma)^2 \\
%     &\le \frac{m !}{2} (\frac{2b}{\Delta})^{m-2} (2\sqrt{\Delta}\sigma)^2,
% \end{split}
% \end{equation*}
% where the second inequality follows from $\Delta^{m-2} \le \Delta$
% for all $m \in \{3,4,\cdots\}$ when $\Delta < 1$. Hence, we have for any $\Delta >0$ and $m \in \{2,3,\cdots\},$ $$  \mbE[|E^0_{k,i}-E^s_{k,i}|^m\ \vert\ \mcF_{i-1}] \le \frac{m !}{2} \left(\max\left\{\frac{2b}{\Delta},2b\right\}\right)^{m-2} (2\sqrt{\Delta}\sigma)^2.$$ Note that we cannot use this approach for case C.} 

We next present the important Theorem~\ref{thm:var-fd-crn} that characterizes the effect of using CRN when estimating the gradient of the function $f$ at $\BFx$ using the observable $F(\cdot,\xi)$. We shall see later that Theorem~\ref{thm:var-fd-crn} is crucial and forms the basis of the efficiency afforded within ASTRO-DF. A variation of Theorem~\ref{thm:var-fd-crn} appears in~\cite{glasserman2004monte}.

\begin{theorem}[Variance of Function Difference] \label{thm:var-fd-crn} Suppose Assumption~\ref{assum:varcont} holds, and consider any fixed $\BFx \in \real^d$. 
\begin{enumerate}
    \item[(i)] (no-CRN) For independent $\xi_i$ and $\xi_j$, and any $\BFs\in\real^d$, \[ \mathrm{Var}\left( F(\BFx+\BFs, \xi_j)-F(\BFx,\xi_i)\right) =\sigma^2_F(\BFx) + \sigma^2_F(\BFx + \BFs).\]
    \item[(ii)] (CRN, non-Lipschitz paths) Suppose $\xi_i=\xi_j$ and that Assumption \ref{assum:holder} holds. Then for any $\BFs\in\real^d$ with small enough norm,
    \[\mathrm{Var}\left( F(\BFx+\BFs, \xi_j)-F(\BFx,\xi_i)\right) \leq 2\sigma_F^2(\BFx)L_{\rho}\|\BFs\|^{\alpha} + 3L_{\sigma}\|\BFs\|.\]
    \item[(iii)] (CRN, Lipschitz paths) Suppose $\xi_i=\xi_j$ and that Assumption \ref{assum:lip-F} holds. Then for any $\BFs\in\real^d$ with small enough norm,  \[\mathrm{Var}\left( F(\BFx+\BFs, \xi_j)-F(\BFx,\xi_i)\right) \leq \mbE\left[\kappa_{FL}(\xi)^2\right] \, \|\BFs\|^2.\]
\end{enumerate}
\end{theorem}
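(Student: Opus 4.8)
The plan is to handle the three cases separately, since each draws on a different assumption, but to anchor all of them in the elementary identity $\Var(A-B) = \Var(A) + \Var(B) - 2\Cov(A,B)$ applied with $A = F(\BFx+\BFs,\xi_j)$ and $B = F(\BFx,\xi_i)$. Case (i) is immediate: independence of $\xi_i$ and $\xi_j$ forces $\Cov(A,B)=0$, so the identity collapses to $\sigma_F^2(\BFx)+\sigma_F^2(\BFx+\BFs)$, and Assumption~\ref{assum:varcont} guarantees both terms are finite. For case (iii) I would bypass the identity entirely: bound the variance by the raw second moment, $\Var(A-B)\leq \mbE[(A-B)^2]$, invoke the pathwise Lipschitz bound of Assumption~\ref{assum:lip-F} pointwise in $\xi$ to get $(A-B)^2 \leq \kappa_{FL}(\xi)^2\|\BFs\|^2$, and take expectations using $\mbE[\kappa_{FL}^2]<\infty$. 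These two cases are short.

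The substance is case (ii), where $\xi_i=\xi_j$ and the covariance term survives. Here I would first rewrite $\Cov(A,B)=\rho_F(\BFx+\BFs;\BFx)\,\sigma_F(\BFx+\BFs)\sigma_F(\BFx)$ straight from the definition of the autocorrelation, and then perform the key regrouping (a completion of the square)
\[
\Var(A-B) = \big(\sigma_F(\BFx)-\sigma_F(\BFx+\BFs)\big)^2 + 2\,\sigma_F(\BFx)\sigma_F(\BFx+\BFs)\big(1-\rho_F(\BFx+\BFs;\BFx)\big),
\]
which cleanly separates a ``variance-gap'' term from a ``decorrelation'' term. The decorrelation factor is controlled directly by the H\"older bound $1-\rho_F \leq L_\rho\|\BFs\|^\alpha$ of Assumption~\ref{assum:holder}.

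The main obstacle, and the place needing care, is that Assumption~\ref{assum:holder} controls $\sigma_F^2$ rather than $\sigma_F$, and anchors the variance only at $\BFx$ (not at $\BFx+\BFs$). I would resolve the first point via nonnegativity of standard deviations: $\big(\sigma_F(\BFx)-\sigma_F(\BFx+\BFs)\big)^2 \leq |\sigma_F(\BFx)-\sigma_F(\BFx+\BFs)|\cdot|\sigma_F(\BFx)+\sigma_F(\BFx+\BFs)| = |\sigma_F^2(\BFx)-\sigma_F^2(\BFx+\BFs)| \leq L_\sigma\|\BFs\|$, and the second via $\sigma_F(\BFx)\sigma_F(\BFx+\BFs)\leq \sigma_F^2(\BFx)+\tfrac12 L_\sigma\|\BFs\|$, again from the Lipschitz-variance bound. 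Substituting these gives a leading term $2\sigma_F^2(\BFx)L_\rho\|\BFs\|^\alpha$ plus remainders $L_\sigma\|\BFs\| + L_\sigma L_\rho\|\BFs\|^{1+\alpha}$; the small-norm hypothesis then lets me absorb the cross term through $L_\rho\|\BFs\|^\alpha\leq 2$ (valid since $\alpha\geq 1$), collapsing the remainder into $3L_\sigma\|\BFs\|$ and yielding the stated bound. I expect the bookkeeping in this absorption step, and the $\sigma_F^2$-versus-$\sigma_F$ conversion, to be the only genuinely delicate parts of the argument.
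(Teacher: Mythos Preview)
Your proposal is correct and matches the paper's argument almost exactly for parts (i) and (iii). For part (ii) you take a slightly different algebraic route: you first complete the square to write $\Var(A-B)=(\sigma_F(\BFx)-\sigma_F(\BFx+\BFs))^2+2\sigma_F(\BFx)\sigma_F(\BFx+\BFs)(1-\rho_F)$ and then bound the two pieces, whereas the paper leaves the expression as $\sigma_F^2(\BFx+\BFs)+\sigma_F^2(\BFx)-2\rho_F\sigma_F(\BFx+\BFs)\sigma_F(\BFx)$ and directly substitutes $\sigma_F^2(\BFx+\BFs)\leq\sigma_F^2(\BFx)+L_\sigma\|\BFs\|$ and $\sigma_F(\BFx+\BFs)\sigma_F(\BFx)\geq\sigma_F^2(\BFx)-L_\sigma\|\BFs\|$ to reach the intermediate bound $2\sigma_F^2(\BFx)(1-\rho_F)+L_\sigma\|\BFs\|(1+2\rho_F)$. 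Both routes use identical ingredients (Lipschitz $\sigma_F^2$, H\"older $\rho_F$, and $\rho_F\leq 1$) and land on the same final expression; the paper's path is marginally shorter since it avoids the $\sigma_F^2$-versus-$\sigma_F$ conversion and the cross-term absorption you flagged, while your decomposition has the conceptual advantage of cleanly isolating the ``variance-gap'' and ``decorrelation'' contributions.
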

\begin{proof} The proof of (i) follows from the definition of variance, independence of $\xi_i,\xi_j$, and finiteness of $\sigma_F^2(\cdot)$. For proving (ii), write  
    \begin{align}
        \mathrm{Var}(F(\BFx+\BFs,\xi_i)-F(\BFx,\xi_j))&=\sigma_F^2(\BFx+\BFs)+\sigma_F^2(\BFx)-2\rho_F(\BFs;\BFx)\sigma_F(\BFx+\BFs)\sigma_F(\BFx)\nonumber\\
        %&\leq 2\sigma_F^2(\BFx)+L_{\sigma} \|\BFs\|-2\rho_F(\BFs;\BFx)(\sigma_F^2(\BFx)-L_{\sigma} \|\BFs\|)\nonumber\\
        &\leq 2\sigma_F^2(\BFx)(1-\rho_F(\BFs;\BFx))+\|\BFs\|L_{\sigma}(1+2\rho_F(\BFs;\BFx)) \nonumber \\ 
        &\leq 2\sigma_F^2(\BFx)L_{\rho}\|\BFs\|^{\alpha} + 3L_{\sigma}\|\BFs\|.\label{eq:var-diff}
    \end{align} 
For proving (iii), use Assumption~\ref{assum:lip-F}, square both sides, and take expectations. 
\end{proof}

\revise{Lastly, we present Borel-Cantelli's First Lemma~~\cite{1991wil} that repeatedly invoke.
\begin{lemma}[Borel-Cantelli's First Lemma] \label{lem:borel-cantelli}
Let $(A_n)_{n\in \mbN}$ be a sequence of events defined on a probability space. If $\sum_{n=1}^\infty \mbP\left(A_n\right) < \infty$, then $\mbP\left(A_n \text{ i.o.}\right)=0$.     
\end{lemma}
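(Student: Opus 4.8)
The plan is to work directly from the definition of the $\limsup$ of a sequence of events, combined with countable subadditivity of $\mbP$ and the vanishing of the tail of a convergent series. Recall from the notation in Section~\ref{sec:mathprelim} that $\{A_n \text{ i.o.}\} = \limsup_n A_n = \bigcap_{N=1}^\infty \bigcup_{n \geq N} A_n$, so the entire argument reduces to estimating the probability of this nested set.

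First I would fix an arbitrary $N \in \mbN$ and observe that, since an intersection over all $N$ is contained in each of its constituent terms, we have the inclusion $\{A_n \text{ i.o.}\} \subseteq \bigcup_{n \geq N} A_n$. Applying monotonicity of $\mbP$ and then countable subadditivity to the union gives
\[ \mbP\left(A_n \text{ i.o.}\right) \leq \mbP\Big(\bigcup_{n\geq N} A_n\Big) \leq \sum_{n=N}^\infty \mbP(A_n), \]
and crucially the left-hand side does not depend on $N$, so this bound holds uniformly over all choices of $N$.

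Next I would invoke the hypothesis $\sum_{n=1}^\infty \mbP\left(A_n\right) < \infty$: convergence of the full series forces its tail to vanish, i.e., $\sum_{n=N}^\infty \mbP(A_n) \to 0$ as $N \to \infty$. Letting $N \to \infty$ (equivalently, taking the infimum over $N$) on the right-hand side of the displayed inequality while the left-hand side remains fixed yields $\mbP\left(A_n \text{ i.o.}\right) \leq 0$. Since any probability is nonnegative, this forces $\mbP\left(A_n \text{ i.o.}\right) = 0$, which is the claim.

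There is no genuine obstacle here, as the result is entirely classical; the only points deserving a brief mention are that $\{A_n \text{ i.o.}\}$ is measurable (being a countable combination of the measurable events $A_n$, so $\mbP$ applies to it) and that the interchange embodied in passing to the tail is justified purely by the assumed summability, requiring no additional structure on the $A_n$.
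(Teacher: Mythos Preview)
Your proof is correct and is exactly the standard textbook argument for Borel--Cantelli's first lemma. The paper itself does not supply a proof of this lemma; it simply states it with a citation to Williams~\cite{1991wil}, so there is nothing to compare against beyond noting that your argument matches the classical one found in that reference.
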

}

\section{ASTRO and ASTRO-DF}\label{sec:algdesc} We now provide complete algorithm listings (see Algorithm~\ref{alg:ASTRODF} and~\ref{alg:ASTRO}) and a description of the adaptive sample sizes within ASTRO-DF and ASTRO, which are respectively the zeroth- and first-order adaptive sampling stochastic TR algorithms we analyze in the remaining part of the paper. 
\begin{algorithm}[!htp]  %\myalgsize
\caption{ASTRO-DF}
\label{alg:ASTRODF}
\begin{algorithmic}[1]
\REQUIRE Initial guess $\BFx_{0}\in\real^d$, initial \revise{and} maximum radii $\Delta_{\max}>\Delta_0>0$, success threshold $\eta\in(0,1)$, expansion \revise{and} shrinkage constants $\gamma_1>1>\gamma_2>0$, sample size inflation sequence $\{\lambda_k\}_{k=0,1,\ldots}$, adaptive sampling constants $\sigma_0,\kappa_{as}>0$, and criticality threshold $\mu>0$.
\FOR{$k=0,1,2,\hdots$}
 
\STATE \label{ASTRODF:Model} (\textbf{MU}) Select the design set $\mcX_{k}$, estimate $\Fbar_k^{i}(N_{k}^{i})$ with sample size $N_k^i$ for $i=0,1,\cdots,p$, and build a local model $M_k$ following Definition \ref{defn:polyintermd}. 

\STATE \label{ASTRODF:TRsubprob} (\textbf{SM}) Approximate the $k$-th step by minimizing the model inside the TR with $\BFS_{k}=\argmin_{\BFs \in \mcB(\BFX_k^0;\Delta_k)}M_{k}(\BFX_k^0+\BFs)$, and set $\BFX_k^{\text{s}}=\BFX_k^0+\BFS_{k}$. 

\STATE \label{ASTRODF:evaluate} (\textbf{CE}) Estimate the function at the candidate point using adaptive sampling to obtain $\Fbar_k^{\text{s}}(N_k^{\text{s}})$. Compute the success ratio 
\begin{equation}
    \rhohat_k=\frac{\Fbar_k^{\text{s}}(N_k^{\text{s}})-\Fbar_k^0(N_k^0)}{M_k^{\text{s}}-M_k^0}.\label{eq:success-ratio}
\end{equation}

\STATE \label{ASTRODF:ratio} (\textbf{TM}) Set
$(\BFX_{k+1}^0,\Delta_{k+1})=
\begin{cases}
    (\BFX_k^{\text{s}},\gamma_1\Delta_{k}\wedge \Delta_{\max})& \text{if } \rhohat_k>\eta \text{ \revise{and} }  \frac{\|\TD M_{k}^0\|}{\Delta_{k}}\ge \frac{1}{\mu}, \\
    (\BFX_{k}^0,\Delta_{k}\gamma_2)              & \text{otherwise}.
\end{cases}
$
Set $k=k+1$.
\ENDFOR
\end{algorithmic}
\end{algorithm}

We do not detail all operations in ASTRO(-DF) since they have been discussed elsewhere~\cite{Sara2018ASTRO,vasquez2019astro} --- also see Section~\ref{sec:TRbasics}.  However, notice that both ASTRO(-DF) have the repeating four-step structure introduced in Section~\ref{sec:qposed}. Of these, the MU and CE steps are directly amenable to CRN, making them the focus of our analysis. \revise{Note, for CRN cases, one sample size $N_k^{i}=N_k\ \forall i\in\{0,1,\cdots,p,\text{s}\}$ is used for all evaluated points using the maximum standard deviation estimate $\sigmahat^{\max}_F(k,n):=\max_{i\in\{0,1,2,\cdots,p,s\}}\sigmahat_F(\BFX_k^i,n)$ or $\sigmahat^{\max}_{\BFG}(k,n):=\max_{i\in\{0,s\}}\sigmahat_{\BFG}(\BFX_k^i,n)$ for the zero-th order and first-order oracles. Importantly, in our analysis, we consider CRN imposed for every evaluation within an iteration but for different iterations, different independent random numbers are used. This makes each observation at the incumbent conditioned on filtration random. A more aggressive CRN would indeed use the same random numbers across iterations making only the new observations conditioned on filtration random; the old observations from prior iterations will be deterministic. While a benefit of doing this is saving even more simulation budget, the analysis of increments of observations becomes more complex and is open for future work. That said, we recommend the latter for implementation.}

In the MU step of ASTRO-DF, a local model $M_k(\BFX_k^0+\BFs), \BFs\in \mcB(\BFX_k^0;\Delta_k)$ is constructed by generating $N_k^i,\ i=0,1,\cdots,p$ observations of (only) the objective at each of the $(p+1)$ points in a design set $\mcX_k := \left\{\BFX_k^0, \BFX_k^1, \ldots,\BFX_k^p\right\}$. Analogously, during the MU step of ASTRO, a local model $M_k(\BFX_k^0+\BFs), \BFs\in \mcB(\BFX_k^0;\Delta_k)$ is constructed by generating $N_k^0$ observations of the objective and gradient at the incumbent iterate $\BFX_k^0$, to obtain estimates $\Fbar(\BFX_k^0,N_k^0)$ and $\BFGbar(\BFX_k^0,N_k^0)$ of the objective and gradient, respectively. In the CE step of ASTRO(-DF), $N_k^{\text{s}}$ additional observations of the objective are made at a trial point $\BFX_k^{\text{s}}:=\BFX_k^0+\BFS_k$ to obtain the estimate $\Fbar(\BFX_k^{\text{s}},N_k^{\text{s}})$.

Consistent with the premise of the paper that using CRN within the MU and CE steps can be important, we analyze three adaptive sampling choices for each of ASTRO-DF and ASTRO. The exact expression for these adaptive sample choices appear in~\eqref{eq:exactnk-A-DF}--\eqref{eq:exactnk-C} below. As implied by the designation $0$ or $1$, the choices~\eqref{eq:exactnk-A-DF}--\eqref{eq:exactnk-C-DF} pertain to the zeroth-order case, i.e., ASTRO-DF, and the choices~\eqref{eq:exactnk-A}--\eqref{eq:exactnk-C} pertain to the first-order case, i.e., ASTRO. And, as detailed in Table~\ref{tab:synopsis}, the designations A,B,C refer to the sample-path structures and use of CRN.

\begin{align} N_k^{i}&=\min\left\{n\in\mbN:\frac{\sigma_0\vee\sigmahat_{F}(\BFX_k^{i},n)}{\sqrt{n}}\leq\kappa_{as}\frac{\Delta_k^{2}}{\sqrt{\lambda_k}}\right\},\  i\in\{0,1,2,\ldots,p,\text{s}\}; \tag{A-0} \label{eq:exactnk-A-DF}\\
\revise{N_k}&=\min\left\{n\in\mbN:\frac{\sigma_0\vee\revise{\sigmahat^{\max}_F(k,n)}}{\sqrt{n}}\leq\kappa_{as}\frac{\Delta_k^{3/2}}{\sqrt{\lambda_k}}\right\}; \tag{B-0} \label{eq:exactnk-B-DF}\\
\revise{N_k}&=\min\left\{n\in\mbN:\frac{\sigma_0\vee\revise{\sigmahat^{\max}_F(k,n)}}{\sqrt{n}}\leq\kappa_{as}\frac{\Delta_k}{\sqrt{\lambda_k}}\right\}; \tag{C-0} \label{eq:exactnk-C-DF}\\
N_k^i&=\min\left\{n\in\mbN:\frac{\sigma_0\vee\sigmahat_{F}(\BFX_k^{i},n)\vee\sigmahat_{\BFG}(\BFX_k^{i},n)}{\sqrt{n}}\leq\kappa_{as}\frac{\Delta_k^2}{\sqrt{\lambda_k}}\right\},\  i\in\{0,\text{s}\}; \tag{A-1} \label{eq:exactnk-A}\\
\revise{N_k}&=\min\left\{n\in\mbN:\frac{\sigma_0\vee\revise{\sigmahat^{\max}_{\BFG}(k,n)}}{\sqrt{n}}\leq\kappa_{as}\frac{\Delta_k}{\sqrt{\lambda_k}}\right\}; \tag{B-1} \label{eq:exactnk-B}\\
\revise{N_k}&=\min\left\{n\in\mbN:\frac{%\max\{\sigma_0,\sigmahat_{\BFG}(\BFX_k^{i},n)\}
\sigma_0}{\sqrt{n}}\leq\kappa_{as}\frac{1}{\sqrt{\lambda_k}}\right\}=\left\lceil\frac{\sigma_0^2\lambda_k}{\kappa_{as}^2}\right\rceil. \tag{C-1} \label{eq:exactnk-C}
\end{align}

\revise{\begin{remark}[Implementation] In \eqref{eq:exactnk-A-DF}--\eqref{eq:exactnk-C-DF} and \eqref{eq:exactnk-A}--\eqref{eq:exactnk-B}, since $\sigma_0$ serves as a lower bound for the estimated standard deviation at any point $\BFX_k^i$, it is usually chosen based on prior experimentation. In \eqref{eq:exactnk-C}, setting $\sigma_0$ to $\hat{\sigma}_{\BFG}(\BFX_{k-1}^i,N_{k-1}^i)$ is reasonable given the smoothness assumption of the sample path. In any case, the estimated standard deviation may become significantly large due to a high function value, potentially leading to an unnecessarily large $N_k^i$. Therefore, we recommend setting $\kappa_{as}$ as a function of $f_0$, such as $f_0 / \Delta_0^2$ in the case \eqref{eq:exactnk-A-DF}.\end{remark}}

\begin{algorithm}[!htp]  %\myalgsize
\caption{ASTRO}
\label{alg:ASTRO}
\begin{algorithmic}[1]
\REQUIRE Same as ASTRO-DF.
\FOR{$k=0,1,2,\hdots$}
 
\STATE \label{ASTRO:model-construction} (\textbf{MU}) Obtain $\Fbar_k^0(N_k^0)$ and $\BFG_k=\BFGbar_k(N_k^0)$ with sample size $N_k^0$ and build a local model $M_k$ as defined in \eqref{eq:mdefn}.

\STATE \label{ASTRO:TRsubprob} (\textbf{SM})  Same as ASTRO-DF, find $\BFX_k^{\text{s}}=\BFX_k^0+\BFS_{k}$ 

\STATE \label{ASTRO:evaluate} (\textbf{CE})  Estimate $\Fbar_k^{\text{s}}(N_k^{\text{s}})$ and compute the success ratio \eqref{eq:success-ratio}.

\STATE \label{ASTRO:ratio} (\textbf{TM})  Same as ASTRO-DF.
Set $k=k+1$.
\ENDFOR
\end{algorithmic}
\end{algorithm}   

A few observations about~\eqref{eq:exactnk-A-DF}--\eqref{eq:exactnk-C-DF} and~\eqref{eq:exactnk-A}--\eqref{eq:exactnk-C} are noteworthy. First, notice that the various choices in~\eqref{eq:exactnk-A-DF}--\eqref{eq:exactnk-C-DF} and~\eqref{eq:exactnk-A}--\eqref{eq:exactnk-C} differ essentially by the power of the TR radius appearing on the right-hand side of the inequality within braces. For instance, in~\eqref{eq:exactnk-A-DF}, the adaptive sample size is chosen to balance the standard deviation $\sigmahat_{F}(\BFX_k^{i},n)/\sqrt{n}$ of function estimates with the \emph{square} of the prevailing TR radius $\Delta_k^{2}$; and likewise, in~\eqref{eq:exactnk-B}, the adaptive sample size is chosen to balance the standard deviation $\sigmahat_{\BFG}(\BFX_k^{i},n)/\sqrt{n}$ of gradient estimate norms with the prevailing TR radius $\Delta_k$. Second, the sequence $\{\lambda_k\}_{k \geq 1}$ is a predetermined sequence that goes to infinity slowly. $\lambda_k$ has the interpretation of the ``cost'' of adaptive sampling---such costs are well-known and inevitable in sequential sampling settings~\cite{ghosh:sequential1997}. Third, the constants $\sigma_0$ and $\kappa_{as}$ are pre-determined with values that are largely robust to theory and practice. Fourth, the most structured case~\eqref{eq:exactnk-C} is special in that its analysis differs from the rest of the cases, and also because its adaptive sample size is not random since the balancing power of the TR radius is zero.

%\sara{moved these from section 3.3}
%At the end of iteration $k$, the stochastic process $(\{\BFX_k^{i}\}_{i=0}^{p},\BFX_k^{\text{s}},N_k^{\text{s}},\Delta_{k+1})$ becomes $\mcF_{k}$ measurable. On the other hand, the sampling error $$\Ebar_k^{i}(N_k^{i})=\frac{1}{N_k^{i}}\sum_{j=1}^{N_k^{i}}E_{k,j}^{i}=\frac{1}{N_k^i}\sum_{j=1}^{N_k^{i}}F(\BFX_k^{i},\xi_j)-f_k^i,$$ is a martingale with $E_{k,j}^{i}\in\mcF_{k,j}$ and $\mbE[E_{k,j}^{i}|\mcF_{k,j-1}]=0$, where $\mcF_{k}:=\mcF_{k,0}\subset\mcF_{k,1}\subset\cdots\subset\mcF_{k+1}$. Similarly, in the presence of a first-order stochastic oracle,
%$$\BFEbar_k^g(N_k^0)=\frac{1}{N_k^0}\sum_{j=1}^{N_k^0}\BFE_{k,j}^g=\frac{1}{N_k^0}\sum_{j=1}^{N_k^0}\BFG(\BFX_k^0,\xi_j)-\TD f_k,$$ is a martingale with $\BFE_{k,j}^{g}\in\mcF_{k,j}$ and $\mbE[\BFE_{k,j}^{g}|\mcF_{k,j-1}]=0$. We make the next two assumptions on the higher moments of the stochastic noise resembling the Bernstein condition~\cite{wainwright2019high}[Chapter 2, Expression (2.15)].

How did we arrive at the powers of the TR radii in the expressions ~\eqref{eq:exactnk-A-DF}--\eqref{eq:exactnk-C-DF} and~\eqref{eq:exactnk-A}--\eqref{eq:exactnk-C}? In a nutshell, these powers (for all cases except~\eqref{eq:exactnk-C}) result from the need to satisfy a certain \emph{balance condition}, which states that to ensure efficiency, sampling in MU and CE of ASTRO(-DF)~\cite{Sara2018ASTRO} ought to continue until model errors incurred at the incumbent and the candidate points are of the \emph{same probabilistic order as the square of the prevailing TR radius.} Formally, recalling the center $\BFX_k^0$ and the candidate point $\BFX_k^\text{s}$,  the balance condition stipulates that the estimate errors \begin{align}\label{eq:errors1}\Ebar^{\text{s}}_k(N_k^{\text{s}}) &:= \Fbar(\BFX_k^\text{s},N_k^{\text{s}}) - f(\BFX_k^\text{s}); %=\frac{1}{N_k^\text{s}}\sum_{j=1}^{N_k^\text{s}}E_{k,j}^\text{s};%, \quad \BFS_ \in \Delta_k; 
\mbox{ and }%\\ \label{eq:errors2} 
\Ebar^0_k(N_k^0) := \Fbar(\BFX_k^0,N_k^0) -  f(\BFX_k^0),\end{align} are of the same order as the square of the prevailing TR radius, i.e., \begin{equation}\label{eq:errlock}|\Ebar_k^{\text{s}}(N_k^{\text{s}})-\Ebar_k^0(N_k^0)| =  \mcO(\Delta_k^2).\end{equation} Since the variance of $|\Ebar_k^{\text{s}}(N_k^{\text{s}})-\Ebar_k^0(N_k^0)|$ is intimately connected to the regularity of the sample paths $\Fbar(\BFX_k^i,N_k^i)$ and whether CRN is in use, it stands to reason then that the adaptive sample sizes also depend on these latter factors. The precise nature of these connections will become evident through the ensuing analysis. 

%\begin{center}
 
%\end{center}

%The corresponding prescribed adaptive sample sizes appear in~\eqref{eq:exactnk-A-DF}--\eqref{eq:exactnk-C}, and an approximate order relationship with the TR radius appears in the last column of Table~\ref{tab:synopsis}.

\subsection{Balance Condition Theorem}\label{sec:balancemain}
We now present a key result demonstrating that the balance condition~\eqref{eq:errlock} holds for the first five cases in Table~\ref{tab:synopsis} if the adaptive sample sizes are chosen according to~\eqref{eq:exactnk-A-DF}--\eqref{eq:exactnk-B}. (As will become evident later, the balance condition~\eqref{eq:errlock} is not needed for the last case~\eqref{eq:exactnk-C} in Table~\ref{tab:synopsis}.) For economy, we drop $\BFX_k$ from function and gradient estimates and replace $\Fbar(\BFX_k^0,N_k^0)$, $\Fbar(\BFX_k^{\text{s}},N_k^{\text{s}})$, $\BFGbar(\BFX_k^0,N_k^0)$, $\BFGbar(\BFX_k^{\text{s}},N_k^{\text{s}})$, $M_k(\BFX_{k}^0)$, $M_k(\BFX_k^{\text{s}})$, and $\TD M_k(\BFX_k^{0})$ with $\Fbar_k^0(N_k^0)$, $\Fbar_k^{\text{s}}(N_k^{\text{s}})$, $\BFGbar_k^0(N_k^0)$, $\BFGbar_k^{\text{s}}(N_k^{\text{s}})$, $M_k^0$, $M_k^{\text{s}}$, and $\TD M_k$, respectively. We also replace function and gradient values $f(\BFX_k^{0})$, $f(\BFX_k^{\text{s}})$, and $\TD f(\BFX_k^0)$ with $f_k^0$, $f_k^{\text{s}}$, and $\TD f_k$, respectively. The stochastic error of function values and gradient values are $E_{k,j}^i=F(\BFX_k^i,\xi_j)-f_k^i$ for all $i$ taking values as specified in Case~\eqref{eq:exactnk-A-DF}--\eqref{eq:exactnk-C} and $\BFE_{k,j}^g=\BFG(\BFX_k^0,\xi_j)-\TD f_k$.

\revise{We start with an assumption on the function and gradient estimate errors needed to invoke Lemma~\ref{lem:dependent-bernstein} when proving the main theorem of this section.}

\begin{assumption}[Stochastic Noise Tail Behavior] \label{assum:martingale}
    For a $\mcF_{k}$-measurable iterate $\BFX_k^i$, \revise{let $\mcF_{k,j}$ be the intermediate $\sigma$-algebras after each observation $j$ at iteration $k$ such that $\mcF_{k}\subseteq \mcF_{k,1}\subseteq \mcF_{k,2}\subseteq \cdots\subseteq \mcF_{k+1}$}. The stochastic errors $E_{k,j}^{i}$ satisfy $\mbE[E_{k,j}^{i}|\mcF_{k,j-1}]=0$, and there exist constants  $\sigma_f^2>0$ and $b_f>0$ such that 
    \begin{equation}
    \mbE[|E_{k,j}^{i}|^m|\mcF_{k,j-1}]\leq\frac{m!}{2}b_f^{m-2}\sigma_f^2,\ \forall m=2,3,\cdots.\label{eq:subexp-f}
    \end{equation} 
Furthermore, let $[\BFE_{k}^g]_r$ be the $r$-th element of the stochastic gradient error. Then for all $i,j$, $[\BFE_{k,j}^g]_r$ satisfy, for any $r\in \{1,\dots,d\}$,  $\mbE\left[[\BFE_{k,j}^g]_r \ \middle\vert\ \mcF_{k,j-1}\right]=0$. There also exist constants $\sigma_g^2>0$ and $b_g>0$ such that for a fixed $n$ and any $r\in \{1,\dots,d\}$, 
    \begin{equation}
        \mbE\left[|[\BFE_{k,j}^g]_r|^m|\mcF_{k,j-1}\right]\leq\frac{m!}{2}b_g^{m-2}\sigma_g^2,\ \forall m=2,3,\cdots.\label{eq:subexp-g}
    \end{equation} 
\end{assumption}

\revise{The inequalities in~\eqref{eq:subexp-f} and~\eqref{eq:subexp-g}  impose restrictions on the magnitude of the moments of $E^{i}_{k,j}$ and $\BFE_{k,j}^g$, and are widely satisfied. For example, for any light-tailed real-valued random variable  $X$ having a moment generating function in a neighborhood of zero, also expressed as $X$ needing to satisfy $\mbP(X > x) \leq \exp\{-\alpha x\}$ for some $\alpha >0$ and large enough $x$, the inequality in~\eqref{eq:subexp-f} holds. Random variables having distributions from common families such as the exponential, gamma, and Gaussian are all light-tailed. In general, the stipulation that all moments of $X$ exist is weaker than the stipulation $X$ that has a moment generating function in some neighborhood of zero. The inequalities in~\eqref{eq:subexp-f} and~\eqref{eq:subexp-g} might look cryptic to the reader and appears to come from original general treatment of Bennett~\cite{1962ben} --- see, for instance~\cite{1962ben}[p.37].}

\revise{\begin{remark}[Estimation Error Bounds] \label{remark:dif-prob}
By Assumption~\ref{assum:martingale}, $\mbE\left[E_{k,j}^2 \vert \mcF_{k,j-1} \right]\leq \sigma_f^2$ and $\mbE\left[[\BFE^g_{k,j}]_r^2 \vert \mcF_{k,j-1} \right]\leq \sigma_g^2$ for all $k$, which aligns with the finite variance setting in Assumption~\ref{assum:varcont}. Hence, the application of Lemma~\ref{lem:dependent-bernstein} yields that for all $k,j$, 
\begin{align*}
    \mbP\left(\sum_{j=1}^n E_{k,j} \geq nc \ \middle\vert\ \mcF_{k} \right) &\leq \exp\left\{- \frac{nc^2}{2(b_fc + \sigma_f^2)} \right\},\mbox{ and } \\
    \mbP\left(\sum_{j=1}^n [\BFE^g_{k,j}]_r \geq nc \ \middle\vert\ \mcF_{k} \right) &\leq \exp\left\{- \frac{nc^2}{2(b_gc + \sigma_g^2)} \right\}.
\end{align*} 
% Moreover, define $D_{k,j}^i=E_{k,j}^s-E_{k,j}^i,\ i\in\{1,2,\cdots,p,\text{s}\}$. By Lemma~\ref{lem:dependent-bernstein-diff} and Assumption~\ref{assum:martingale}, there exist constants $b_{f}>0$, and $\sigma_{df}>0$ (which will depend on the use of CRN and sample-path structure) such that
% \begin{align*}
%     \mbP\left(\sum_{j=1}^n D_{k,j}^i \geq nc \ \middle\vert\ \mcF_{k} \right) &\leq \exp\left\{- \frac{nc^2}{2(b_{f}c + \sigma_{df}^2)} \right\}.
% \end{align*} Specifically, for Case~\eqref{eq:exactnk-B-DF}, $\sigma_{df}=\sqrt{\Delta_k}\sigma_{f}$ and for Case~\eqref{eq:exactnk-C-DF}, $\sigma_{df}=\Delta_k\sigma_{f}$ by Theorem~\ref{thm:var-fd-crn}.
\end{remark}

The next result shows the similar tail bounds for the observation error difference $D_{k,j}^i=E_{k,j}^0-E_{k,j}^i,\ i\in\{1,2,\cdots,p,\text{s}\}$ under CRN.

\begin{lemma}\label{lem:diff-bernstein}
    Suppose Assumption~\ref{assum:martingale} holds in addition to the standing Assumption~\ref{assum:lipschitz} and Assumption~\ref{assum:varcont}. Let  $\{\BFX_k\}$ be the sequence of solutions generated by Algorithm~\ref{alg:ASTRODF}.
    \begin{enumerate}
        \item[(a)]  If Assumption~\ref{assum:holder} holds and $2L_{\rho}+3L_{\sigma}\geq 4$, then 
        \begin{align*}
            \mbP\left(\sum_{j=1}^n D_{k,j}^i \geq nc \ \middle\vert\ \mcF_{k} \right) &\leq \exp\left\{- \frac{nc^2}{2\left(b_Bc + \Delta_k\sigma_{B}^2\right)} \right\},
        \end{align*}
        where $b_B=\max\left\{\frac{2b_f}{\Delta_{k}},2b_f\right\}$ and $\sigma_B^2 = (2L_{\rho}+3L_{\sigma}) \max\{1,\sigma_f^2\}$.
        \item[(b)] If Assumption~\ref{assum:lip-F} holds, then 
        \begin{align*}
            \mbP\left(\sum_{j=1}^n D_{k,j}^i \geq nc \ \middle\vert\ \mcF_{k} \right) &\leq \exp\left\{- \frac{nc^2}{2(\Delta_kb_Cc + \Delta_k^2\sigma_C^2)} \right\},
        \end{align*} where $b_C=\sigma_C=\kappa_{Lg}+\kappa_{Lm}$  and $\kappa_{Lm}=\max_{m\geq 2}(\mbE[|\kappa_{FL}(\xi)|^m])^{1/m}$. 
% \begin{equation*}
%     \mathbb{E}[|E^0_{k,i}-E^s_{k,i}|^m | \mathcal{F}_{i-1}] \leq \mbE[\Delta^m \kappa^m_{FL}(\xi_i)|\mcF_{i-1}] \le (\Delta \kappa_{uFL})^m,
% \end{equation*}
    \end{enumerate}
\end{lemma}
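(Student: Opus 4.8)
The plan is to recognize $\{D_{k,j}^i\}_{j\ge 1}$, for fixed $k$ and $i$, as a conditionally centered sequence adapted to the intermediate filtration $\{\mcF_{k,j}\}$, and then to verify that it satisfies the hypotheses of the martingale Bernstein inequality, Lemma~\ref{lem:dependent-bernstein}. Both displayed bounds are then immediate instances of that lemma: part (a) with per-term variance proxy $\Delta_k\sigma_B^2$ and scale $b_B$, and part (b) with variance proxy $\Delta_k^2\sigma_C^2$ and scale $\Delta_k b_C$. So the whole argument reduces to checking, conditionally on $\mcF_{k,j-1}$, that (i) $\mbE[D_{k,j}^i\mid\mcF_{k,j-1}]=0$ and (ii) the Bernstein moment growth $\mbE[|D_{k,j}^i|^m\mid \mcF_{k,j-1}]\le \tfrac{m!}{2}b^{m-2}\sigma^2$ holds for all $m\ge 2$ with the stated $b$ and $\sigma^2$.

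The mean-zero property (i) is inherited directly from Assumption~\ref{assum:martingale}, since $D_{k,j}^i=E_{k,j}^0-E_{k,j}^i$ and each summand is conditionally mean zero. The decisive step is the $m=2$ bound, which is exactly where CRN pays off. Because $\BFX_k^0$ and $\BFX_k^i$ are $\mcF_{k}$-measurable (hence $\mcF_{k,j-1}$-measurable) and the $j$-th observation uses a single fresh common stream $\xi_j$ at both points, the conditional variance of $D_{k,j}^i$ equals $\mathrm{Var}(F(\BFX_k^0,\xi)-F(\BFX_k^i,\xi))$ evaluated at the fixed pair of points, to which Theorem~\ref{thm:var-fd-crn} applies with $\BFs=\BFX_k^0-\BFX_k^i$ and $\|\BFs\|\le\Delta_k$. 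In part (a), Theorem~\ref{thm:var-fd-crn}(ii), together with $\alpha\ge 1$, $\Delta_k\le 1$, and $\sigma_F^2(\cdot)\le\sigma_f^2$ (Remark~\ref{remark:dif-prob}), gives $\mbE[|D_{k,j}^i|^2\mid\mcF_{k,j-1}]\le(2L_\rho\sigma_f^2+3L_\sigma)\Delta_k\le\Delta_k\sigma_B^2$; in part (b), Theorem~\ref{thm:var-fd-crn}(iii) gives $\mbE[|D_{k,j}^i|^2\mid\mcF_{k,j-1}]\le \mbE[\kappa_{FL}^2]\Delta_k^2\le\Delta_k^2\sigma_C^2$. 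Thus the variance proxy carries the CRN-induced factor $\Delta_k$ (resp.\ $\Delta_k^2$).

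For $m\ge 3$ the two cases diverge. In part (a) I would bound $|D_{k,j}^i|\le|E_{k,j}^0|+|E_{k,j}^i|$, apply convexity and the sub-exponential moments of Assumption~\ref{assum:martingale} to get $\mbE[|D_{k,j}^i|^m\mid\mcF_{k,j-1}]\le 2^{m-1}m!\,b_f^{m-2}\sigma_f^2$, and then reconcile this with the target $\tfrac{m!}{2}b_B^{m-2}\Delta_k\sigma_B^2$: since $b_B=\max\{2b_f/\Delta_k,2b_f\}$ blows up like $\Delta_k^{-1}$, the target carries an extra $\Delta_k^{-(m-2)}$ that dominates the $2^{m}$ loss, and the base case $m=3$ is closed off precisely by the hypothesis $2L_\rho+3L_\sigma\ge 4$ (which forces $\sigma_B^2\ge 4\sigma_f^2$), with $\Delta_k\le 1$ handling $m\ge 4$. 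In part (b) the scale $\Delta_k b_C$ does \emph{not} blow up, so I instead use the sample-path Lipschitz structure directly: $|D_{k,j}^i|\le\Delta_k\bigl(\kappa_{FL}(\xi_j)+\mbE[\kappa_{FL}]\bigr)$, whence $\mbE[|D_{k,j}^i|^m\mid\mcF_{k,j-1}]\le\Delta_k^m\,\mbE[(\kappa_{FL}+\mbE[\kappa_{FL}])^m]$, and the uniform constant $\kappa_{Lm}=\max_{m\ge2}(\mbE[|\kappa_{FL}(\xi)|^m])^{1/m}$ is introduced exactly so that these moments are controlled simultaneously across $m$ by $\tfrac{m!}{2}b_C^{m-2}\sigma_C^2$ with $b_C=\sigma_C=\kappa_{Lg}+\kappa_{Lm}$. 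Once (i) and (ii) are in hand, Lemma~\ref{lem:dependent-bernstein} delivers both tail bounds.

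I expect the main obstacle to be the $m\ge 3$ verification in part (b): unlike part (a), there is no blow-up of the scale parameter to absorb the combinatorial loss incurred by centering the CRN'd finite difference, so the higher central moments must be bounded tightly, uniformly in $m$, by the integrand's moments while retaining the $\Delta_k^{m}$ scaling that matches the $m=2$ variance estimate. Arranging the constants so that the single pair $b_C=\sigma_C=\kappa_{Lg}+\kappa_{Lm}$ simultaneously dominates every moment $m\ge 2$ is the delicate part, and it is what the definition of $\kappa_{Lm}$ (together with $\Delta_k\le 1$) is designed to secure.
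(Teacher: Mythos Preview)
Your proposal is correct and follows the paper's approach: verify the Bernstein moment hypotheses for $D_{k,j}^i$ and invoke Lemma~\ref{lem:dependent-bernstein}. Two minor differences worth noting. First, you assume $\Delta_k\le 1$ throughout part~(a); the paper instead splits into the cases $\Delta_k\ge 1$ and $\Delta_k<1$, which is exactly why $b_B$ carries the $\max\{2b_f/\Delta_k,2b_f\}$. Second, for the $m\ge 3$ moments the paper uses Minkowski's inequality on $L^m$ norms rather than the pointwise/convexity route you sketch. In part~(a) this is cosmetic (both give $\tfrac{m!}{2}(2b_f)^{m-2}(2\sigma_f)^2$), but in part~(b) Minkowski makes the step you flag as ``delicate'' almost trivial: from $D_{k,j}^i=(F_{k,j}^0-F_{k,j}^i)-(f_k^0-f_k^i)$ one gets $(\mbE[|D_{k,j}^i|^m\mid\mcF_{k,j-1}])^{1/m}\le (\mbE[|\kappa_{FL}|^m])^{1/m}\Delta_k+\kappa_{Lg}\Delta_k\le(\kappa_{Lm}+\kappa_{Lg})\Delta_k$, whence $\mbE[|D_{k,j}^i|^m\mid\mcF_{k,j-1}]\le (\kappa_{Lm}+\kappa_{Lg})^m\Delta_k^m\le \tfrac{m!}{2}(\Delta_k b_C)^{m-2}(\Delta_k\sigma_C)^2$ using only $1\le m!/2$ for $m\ge 2$. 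So the constants fall out directly and uniformly in $m$, without the reconciliation you anticipated.
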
    
\begin{proof}[Proof of (a)]
    For a direct application of Lemma~\ref{lem:dependent-bernstein}, we need to show that 
    $$\mbE[|D_{k,j}^i|^m\ \vert\ \mcF_{i-1}]\leq \frac{m !}{2} b_B^{m-2} (\sqrt{\Delta_k}\sigma_B)^2,\ \forall m=2,3,\cdots.$$ 
    For $m=2$, we know from Theorem \ref{thm:var-fd-crn}(ii) that $$\mbE[|D_{k,j}^i|^2\ \vert\ \mcF_{i-1}]\leq (2L_{\rho}+3L_{\sigma}) \max\{1,\sigma_f^2\}\Delta_k.$$
    For $m\geq 3$, first consider $\Delta_k\ge1$. 
    Minkowski's inequality implies $\forall m=3,4,\cdots$ 
    \begin{align}
        \mbE[|D_{k,j}^i|^m\ \vert\ \mcF_{i-1}]&\leq \left(\mbE[|E_{k,j}^0|^m\ \vert\ \mcF_{i-1}]^{1/m}+\mbE[|E_{k,j}^i|^m\ \vert\ \mcF_{i-1}]^{1/m}\right)^m \nonumber\\
        &\leq \frac{m !}{2} (2b_f)^{m-2} (2\sigma_f)^2 \label{eq:minkowski}\\
        &\leq \frac{m !}{2} (2b_f)^{m-2} (2\sqrt{\Delta_k}\sigma_f)^2\le \frac{m !}{2} (2b_f)^{m-2} (\sqrt{\Delta_k}\sigma_B)^2.\nonumber
    \end{align}
    Now, consider $\Delta_k<1$. Here, applying Minkowski and multiplying and deviding \eqref{eq:minkowski} by $\sqrt{\Delta_k}$ yields  
    \begin{align*}
        \mbE[|D_{k,j}^i|^m\ \vert\ \mcF_{i-1}]& \le \frac{m !}{2} \left(\frac{2b_f}{\Delta_k}\right)^{m-2} (2\sqrt{\Delta_k}\sigma_f)^2 \le \frac{m !}{2} \left(\frac{2b_f}{\Delta_k}\right)^{m-2} (\sqrt{\Delta_k}\sigma_B)^2,
        %&\le \frac{m !}{2} (\max\left\{\frac{2b_f}{\Delta_{k}},2b_f\right\})^{m-2} (2\sqrt{\Delta_{k}}\sigma_f)^2,
    \end{align*} 
    where the first inequality follows from $\Delta_k^{m-2} \le \Delta_k$ for all $m \in \{2,3,\cdots\}$. Hence, we have for any $\Delta_k > 0$ and $m \in \{2,3,\cdots\}$,
    \begin{equation*}
        \mbE[|D_{k,j}^i|^m\ \vert\ \mcF_{i-1}] \le \frac{m !}{2} \left(\max\left\{\frac{2b_f}{\Delta_{k}},2b_f\right\}\right)^{m-2} (\sqrt{\Delta_{k}}\sigma_B)^2,
    \end{equation*}
    which proves the intended result.
\end{proof}
\begin{proof}[Proof of (b)] 
We first apply Minkowski inequality to get for $m=2,3,\cdots$
\begin{align*}
    \mbE[|D_{k,j}^i|^m\ \vert\ \mcF_{i-1}] & = \mbE[|(F_{k,j}^0-F_{k,j}^i)-(f_{k}^0-f_{k}^i)|^m\ \vert\ \mcF_{i-1}]  \\
    & \leq \left((\mbE[|F_{k,j}^0-F_{k,j}^i|^m\ \vert\ \mcF_{i-1}])^{1/m}+|f_{k}^0-f_{k}^i|^{m\times 1/m}\right)^m\\
    & \leq \left(\Delta_k(\mbE[|\kappa_{FL}(\xi)|^m])^{1/m}+\kappa_{Lg}\Delta_k\right)^m \\
    & \leq (\kappa_{Lm}+\kappa_{Lg})^m\Delta_k^m \leq\frac{m!}{2} (\Delta_kb_{C})^{m-2}(\Delta_k\sigma_{C})^2.
\end{align*} Hence, the application of Lemma~\ref{lem:dependent-bernstein} completes the proof.
\end{proof}
}
%\yunsoo{$\kappa_L$ is not defined and $b_C$ and $\sigma_C$ include $\Delta_k$ term.}

\begin{theorem}\label{thm:asfinite} Suppose Assumption~\ref{assum:martingale} holds in addition to the standing Assumption~\ref{assum:lipschitz} and Assumption~\ref{assum:varcont}.
Let  $\{\BFX_k\}$ be the sequence of solutions generated by Algorithm~\ref{alg:ASTRODF} or~\ref{alg:ASTRO} using adaptive sample sizes in Case~\eqref{eq:exactnk-A-DF} and~\eqref{eq:exactnk-A}, Case~\eqref{eq:exactnk-B-DF} and \eqref{eq:exactnk-B} under Assumption~\ref{assum:holder}, and Case~\eqref{eq:exactnk-C-DF} under Assumption~\ref{assum:lip-F} with $\lambda_k=\lambda_0(\log k)^{1+\epsilon_\lambda}$ for some $\epsilon_\lambda\in(0,1)$ and $\lambda_0\geq 2$. Then for any constant $\kappa_{fde}>0$, 
\begin{equation}
    \mbP\left(|\Ebar_k^{\text{s}}(N_k^{\text{s}})-\Ebar_k^0(N_k^0)|> \kappa_{fde}\Delta_k^2 \text{ i.o.}\right)=0.\label{eq:f-est-diff}
\end{equation}  
\end{theorem}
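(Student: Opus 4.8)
The plan is to apply the first Borel--Cantelli lemma (Lemma~\ref{lem:borel-cantelli}): writing $A_k:=\{|\Ebar_k^{\text{s}}(N_k^{\text{s}})-\Ebar_k^0(N_k^0)|>\kappa_{fde}\Delta_k^2\}$, it suffices to prove $\sum_k \mbP(A_k)<\infty$. Since $\Delta_k,\BFX_k^0,\BFX_k^{\text{s}}$ are $\mcF_k$-measurable, I would bound $\mbP(A_k\mid\mcF_k)$ and then take expectations. The first move is to reduce the difference of sample means to a single martingale sum amenable to Bernstein. In the CRN cases (\eqref{eq:exactnk-B-DF}, \eqref{eq:exactnk-C-DF}, \eqref{eq:exactnk-B}) a common sample size $N_k$ and common streams are used, so $\Ebar_k^{\text{s}}(N_k)-\Ebar_k^0(N_k)=-N_k^{-1}\sum_{j=1}^{N_k} D_{k,j}^{\text{s}}$ with $D_{k,j}^{\text{s}}=E_{k,j}^0-E_{k,j}^{\text{s}}$; the small conditional variance of $D_{k,j}^{\text{s}}$ — of order $\Delta_k$ under Assumption~\ref{assum:holder} and order $\Delta_k^2$ under Assumption~\ref{assum:lip-F}, via Theorem~\ref{thm:var-fd-crn} — is precisely what the argument exploits, and its tail is controlled by Lemma~\ref{lem:diff-bernstein}. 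In the no-CRN cases~\eqref{eq:exactnk-A-DF}/\eqref{eq:exactnk-A} I would instead split $|\Ebar_k^{\text{s}}-\Ebar_k^0|\le|\Ebar_k^{\text{s}}|+|\Ebar_k^0|$, require each term to exceed $\kappa_{fde}\Delta_k^2/2$, and use the independent per-point tail of Remark~\ref{remark:dif-prob}.

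Second, I would extract a \emph{deterministic} (given $\mcF_k$) lower bound on the random sample size directly from the stopping rules: because each rule floors the estimated standard deviation at $\sigma_0>0$, one reads off $N_k\ge\underline{n}_k:=\sigma_0^2\lambda_k/(\kappa_{as}^2\Delta_k^{2p})$, where the radius power is $p=2$ in~\eqref{eq:exactnk-A-DF}/\eqref{eq:exactnk-A}, $p=3/2$ in~\eqref{eq:exactnk-B-DF}, and $p=1$ in~\eqref{eq:exactnk-C-DF}/\eqref{eq:exactnk-B}. The subtlety is that the average is observed at the \emph{random} index $N_k$, not at $\underline{n}_k$; a naive union bound $\sum_{n\ge\underline{n}_k}$ over fixed-$n$ Bernstein tails loses a prefactor $\propto\Delta_k^{-2p}$ that destroys summability when $\Delta_k$ is small. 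I would avoid this with a linear-boundary maximal argument: under the sub-exponential conditions of Assumption~\ref{assum:martingale} (and Lemma~\ref{lem:diff-bernstein} for $D_{k,j}^{\text{s}}$), the object $\exp\{\theta(S_n-\tfrac{c}{2}n)\}$ — with $S_n$ the relevant partial sum and $\theta$ chosen so that the per-step conditional log-MGF is dominated by $\tfrac{c}{2}\theta$ — is a nonnegative supermartingale, so a maximal (Ville-type) inequality gives $\mbP(\exists\,n\ge\underline{n}_k:\,S_n\ge cn\mid\mcF_k)\le \exp\{-\tfrac{\theta c}{2}\,\underline{n}_k\}$ with \emph{no} stopping-time prefactor. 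This is the same exponential supermartingale underlying Lemma~\ref{lem:dependent-bernstein}, now evaluated at the random $N_k$.

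Third comes the arithmetic that closes the estimate. With $c\asymp\kappa_{fde}\Delta_k^2$, the resulting exponent is of order $\underline{n}_k c^2/(b\,c+v_k)$, where $v_k$ is the conditional variance proxy ($\sigma_f^2$ with no CRN, $\sigma_B^2\Delta_k$ under Assumption~\ref{assum:holder}, $\sigma_C^2\Delta_k^2$ under Assumption~\ref{assum:lip-F}) and $b$ is the matching scale from Lemma~\ref{lem:diff-bernstein}. The case-specific powers $p$ are designed so the $\Delta_k$-powers in $\underline{n}_k c^2$ and in the denominator cancel, leaving an exponent $c_0\lambda_k$ with $c_0>0$ independent of $\Delta_k$; hence $\mbP(A_k\mid\mcF_k)\le C\exp\{-c_0\lambda_k\}$ \emph{uniformly}. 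Since $\lambda_k=\lambda_0(\log k)^{1+\epsilon_\lambda}$ with $\epsilon_\lambda>0$ forces $c_0\lambda_k\ge 2\log k$ for all large $k$, taking expectations yields $\mbP(A_k)\le C k^{-2}$ eventually, which is summable, and Borel--Cantelli delivers~\eqref{eq:f-est-diff}.

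The hard part is exactly the cancellation of $\Delta_k$ in the exponent, which couples two ingredients that must be handled together: the maximal-inequality treatment of the random $N_k$ (to suppress the $\Delta_k^{-2p}$ prefactor) and the exact matching of each sample-size radius power to the CRN-induced variance decay of Theorem~\ref{thm:var-fd-crn}. I expect the first-order CRN setting under mere H\"older structure to be the most delicate: there the sample size is calibrated to the \emph{gradient} error, so $\underline{n}_k$ is the smallest among the CRN cases and the margin in the $\Delta_k$-bookkeeping is tightest, making this the step that most demands care (and, if the plain $\Delta_k$-cancellation is not outright, a sharper use of the correlation bound in Theorem~\ref{thm:var-fd-crn}(ii) or of the admissible radius range).
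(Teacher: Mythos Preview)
Your overall skeleton—Borel--Cantelli via conditional tail bounds, the deterministic lower bound $N_k\ge\underline{n}_k$ from the $\sigma_0$ floor, the no-CRN split versus the CRN single-sum $D_{k,j}^{\text{s}}$, and case-by-case Bernstein with variance proxies from Theorem~\ref{thm:var-fd-crn} and Lemma~\ref{lem:diff-bernstein}—is exactly the paper's route. Your maximal/Ville supermartingale device for the random $N_k$ is a genuine refinement: the paper in fact takes the very union bound $\sum_{n\ge\underline{n}_k}$ you flag, sums the geometric series to $2e^{-\lambda_k c_k\nu_k}/(1-e^{-c_k})$, and passes to $k^{-1-\varepsilon}$ without explicitly disposing of the $(1-e^{-c_k})^{-1}$ prefactor. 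Your alternative sidesteps that issue at the price of a slightly heavier tool; otherwise the two arguments coincide.

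There is, however, a real gap in your handling of Case~\eqref{eq:exactnk-B}. Your own arithmetic exposes it: with $\underline{n}_k\propto\lambda_k\Delta_k^{-2}$ (the stopping rule is calibrated to the \emph{gradient} scale $\Delta_k$) and the CRN function-difference variance only $\mcO(\Delta_k)$ under Assumption~\ref{assum:holder}, the Bernstein exponent comes out $\underline{n}_k\cdot\Delta_k^4/\Delta_k\propto\lambda_k\Delta_k$, which is \emph{not} bounded below as $\Delta_k\to 0$. No sharper reading of Theorem~\ref{thm:var-fd-crn}(ii) or of the admissible radius range rescues this—the powers simply do not cancel for the function-difference martingale under~\eqref{eq:exactnk-B}. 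The paper's remedy is to abandon the direct function-difference bound in this case and instead invoke the Newton--Leibniz identity
\[
\Ebar_k^0(N_k)-\Ebar_k^{\text{s}}(N_k)=\BFS_k^\intercal\BFEbar_k^g(N_k)+\int_0^1\bigl(\BFEbar^g(\BFX_k^0+t\BFS_k,N_k)-\BFEbar_k^g(N_k)\bigr)^\intercal\BFS_k\,\mathrm{d}t,
\]
which converts the task into controlling $\|\BFEbar_k^g(N_k)\|$ and the CRN \emph{gradient} increment, each at level $\mcO(\Delta_k)$. Those targets match the~\eqref{eq:exactnk-B} sample size exactly (the first via the plain gradient Bernstein of Remark~\ref{remark:dif-prob}, the second via the gradient analogue of Lemma~\ref{lem:diff-bernstein}(a)), and the $\Delta_k$-cancellation then goes through. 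This gradient detour is the missing idea in your plan for~\eqref{eq:exactnk-B}.
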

% In~\ref{eq:exactnk}, $\sigmahat_F(\BFX_k^v,n)$ $\sigmahat_{\BFG}(\BFX_k,n)=\sqrt{\text{Tr}(\text{Var}(\BFGbar(\BFX_k,n))}$, with $\text{Tr}(\cdot)$ denoting the trace of the covariance matrix~\eqref{eq:covariancematrix}. 
    We make several observations before providing a full proof of Theorem~\ref{thm:asfinite}. \begin{enumerate} \item[(a)] The result appearing in~\eqref{eq:f-est-diff} is simply a formal statement of the balance equation in~\eqref{eq:errlock}, and will form the basis of consistency and complexity calculations in our subsequent analyses of the first five cases~\eqref{eq:exactnk-A}--\eqref{eq:exactnk-B}. Interestingly, the balance condition is not needed for Case~\eqref{eq:exactnk-C}. Similar results have appeared in various prominent papers~\cite{xu2020newton} as a key step when analyzing stochastic TR algorithms. \item[(b)] As should be evident from~\eqref{eq:exactnk-A-DF}--\eqref{eq:exactnk-C} and the last column of Table~\ref{tab:synopsis}, smaller powers of the TR radius translate to smaller sample sizes. This trend is reflected in cases~\eqref{eq:exactnk-A-DF}--\eqref{eq:exactnk-C-DF}, and cases~\eqref{eq:exactnk-A}--\eqref{eq:exactnk-C}, ordered from least to most structure in the sample-paths. \item[(c)] While the choice of TR radius powers in~\eqref{eq:exactnk-A-DF}--\eqref{eq:exactnk-C} likely looks cryptic, the proof of Theorem~\ref{thm:asfinite} should reveal that they result from identifying the smallest power of the TR radius that retains strong consistency. Larger TR radius powers, while retaining consistency, will result in a relinquishment of efficiency, as our later calculations on sample complexity will reveal.
    %\item[(d)] The parameter sequence $\lambda_k, k \geq 1$ is an inflation factor introduced to guarantee almost sure convergence. Such an inflation factor is typical in sequential estimation settings~\cite{ghosh:sequential1997} and can usually be dispensed if a weaker form of convergence, e.g., in probability, should suffice. 
\end{enumerate}

\subsection{Proof of Theorem~\ref{thm:asfinite}}
We prove \eqref{eq:f-est-diff}  case by case, after conditioning on the filtration $\mcF_{k}$, which fixes the quantities $\Delta_k$ and $\BFX_k$ at iteration $k$. 
\begin{proof}
{Case \eqref{eq:exactnk-A-DF} (zeroth-order, no CRN):} We can write
\begin{align}
    \mbP\left(|\Ebar_k^{\text{s}}(N_k^{\text{s}})-\Ebar_k^0(N_k^0)|> \kappa_{fde}\Delta_k^2\right)\leq & \mbP\left(|\Ebar_k^{\text{s}}(N_k^{\text{s}})|>\frac{\kappa_{fde}}{2}\Delta_k^2\right)\label{eq:finite-stoch} \\
    & +\mbP\left(|\Ebar_k^0(N_k^0)|>\frac{\kappa_{fde}}{2}\Delta_k^2\right).\nonumber
\end{align}
We handle the first term noting a similar proof applies to the second term as well. From the sample size rule \ref{eq:exactnk-A-DF}, we see that $N_k^0\geq \lambda_k\nu_k$, where $\nu_k:=\frac{\sigma_0^2}{\kappa_{as}^2\Delta_k^{4}}$. 
We have
     \begin{align}
        \mbP\left(|\Ebar_k^0(N_k^0)|>\frac{\kappa_{fde}}{2}\Delta_k^2\ \middle\vert\ \mcF_{k}\right)&
        \leq\mbP\left(\sup_{n\geq\lambda_k\nu_k}|\Ebar_k(n)|>\frac{\kappa_{fde}}{2}\Delta_k^2\ \ \middle\vert\  \mcF_{k}\right) \nonumber\\
        &\leq\sum_{n\geq\lambda_k\nu_k}\mbP\left(\left|\frac{1}{n}\sum_{j=1}^{n}E_{k,j} \right|>\frac{\kappa_{fde}}{2}\Delta_k^2\ \middle\vert\ \mcF_{k}\right)\nonumber\\   
        &\leq\sum_{n\geq\nu_k\lambda_k}2\exp\left\{-n\frac{\kappa^2_{fde}\Delta_k^4}{(4\kappa_{fde}\Delta_k^2b_f+8\sigma_f^2)}\right\}\nonumber\\
        &\revise{\le} 2 \sum_{n\geq 0}\exp\left\{-c_k(\nu_k\lambda_k+n)\right\}= 2 \frac{\exp\left\{-\lambda_kc_k\nu_k\right\}}{1-\exp\{-c_k\}},\label{eq:sum-k}
    \end{align} 
    where the \revise{third} inequality is obtained using Theorem~\ref{thm:var-fd-crn}\revise{(i)} alongside Lemma~\ref{lem:dependent-bernstein} \revise{and Remark \ref{remark:dif-prob}}, and the \revise{fourth inequality} is obtained by setting $c_k := \frac{\kappa^2_{fde}\Delta_k^4}{4\kappa_{fde}\revise{\Delta_{\max}^2}b_f+8\sigma_f^2}$. 
    Since $\lambda_k=\lambda_0 (\log k)^{1+\epsilon_\lambda}$ and $c_k \nu_k = \mcO(1),$ there is an $\varepsilon >0$ that for large enough $k$ satisfies 
    \begin{equation}
        \mbP\left(|\Ebar_k^0(N_k^0)|\geq \frac{\kappa_{fde}}{2}\Delta_k^2\right)=\mbE\left[\mbP\left(|\Ebar_k^0(N_k^0)|>\frac{\kappa_{fde}}{2}\Delta_k^2\ \middle\vert\ \mcF_{k}\right)\right]\leq k^{-1-\varepsilon}.\label{eq:prob-good-estimate}
    \end{equation}
    Since the right-hand side of~\eqref{eq:prob-good-estimate} is summable in $k$, we can apply \revise{Lemma \ref{lem:borel-cantelli}} to complete the proof \revise{by the same steps applied to $|\Ebar_k^{\text{s}}(N_k^{\text{s}})|$ as well}.
    
{Case \eqref{eq:exactnk-B-DF} (zeroth-order, CRN):} We see from~\eqref{eq:exactnk-B-DF} that $N_k\geq \lambda_k\nu_k$ with $\nu_k:=\frac{\sigma_0^2}{\kappa_{as}^2\Delta_k^{3}}$ \revise{(note, we now only have $N_k$ for all points that are evaluated at iteration $k$)}. Denote \revise{$\Dbar_k^{\text{s}}(n) = \frac{1}{n}\sum_{j=1}^n D_{k,j}^{\text{s}}$ (as defined in Lemma~\ref{lem:diff-bernstein})}. We have 
     \begin{align}
        \mbP\left(|\Dbar_k(N_k)|>\frac{\kappa_{fde}}{2}\Delta_k^2\ \middle\vert\ \mcF_{k}\right)
        & \leq\mbP\left(\sup_{n\geq\lambda_k\nu_k}|\Dbar_k(n)|>\frac{\kappa_{fde}}{2}\Delta_k^2\ \ \middle\vert\  \mcF_{k}\right) \nonumber\\
        %&\leq\sum_{n\geq\lambda_k\nu_k}\mbP\left(\left|\frac{1}{n}\sum_{j=1}^{n}D_{k,j} \right|>\frac{\kappa_{fde}}{2}\Delta_k^2\ \middle\vert\ \mcF_{k}\right)\nonumber\\   
        &\leq\sum_{n\geq\nu_k\lambda_k}2\exp\left\{-n\frac{\kappa^2_{fde}\Delta_k^4}{(4\kappa_{fde}\Delta_k^2\revise{b_B} +8\Delta_k \revise{\sigma_B^2})}\right\}\nonumber\\
        % &= 2 \sum_{n\geq 0}\exp\left\{-c_k(\nu_k\lambda_k+n)\right\} \nonumber\\
        &\revise{\le} 2 \sum_{n\geq 0}\exp\left\{-c_k(\nu_k\lambda_k+n)\right)= 2 \frac{\exp\left\{-\lambda_kc_k\nu_k\right\}}{1-\exp\{-c_k\}},\label{eq:sum-k}
    \end{align} 
    where the second inequality above is obtained using \revise{Lemma~\ref{lem:diff-bernstein}} %with constant $\revise{\sigma_{df}=}\sqrt{\Delta_k}\sigma_{f}$
    and the \revise{third inequality} is obtained by setting $c_k := \frac{\kappa^2_{fde}\Delta_k^3}{4\kappa_{fde} \revise{2b_f\max\{1,\Delta_{\max}\}}+8\revise{\sigma_B^2}}$. \revise{$\sigma_B$ and $b_B$ are as defined in Lemma \ref{lem:diff-bernstein}.}
    Since $\lambda_k=\lambda_0 \, (\log k)^{1+\epsilon_\lambda}$ and $c_k \nu_k = \mcO(1),$ there exists $\varepsilon >0$ such that for large enough $k$, 
    \begin{equation}
        \mbP\left(|\Dbar_k(N_k)|\geq \kappa_{fde}\Delta_k^2\right)=\mbE\left[\mbP\left(|\Dbar_k(N_k)|>\kappa_{fde}\Delta_k^2\ \middle\vert\ \mcF_{k}\right)\right]\leq k^{-1-\varepsilon}.\label{eq:prob-good-estimate}
    \end{equation}
    Finally, again notice that the right-hand side of~\eqref{eq:prob-good-estimate} is summable and apply \revise{Lemma \ref{lem:borel-cantelli}} to see that the assertion holds. 
    %Borel-Cantelli's first lemma for martingales~\cite{1991wil} to see that the assertion holds. 

{Case \eqref{eq:exactnk-C-DF} (zeroth order, CRN, Lipschitz):} Follow the steps in the proof for Case \eqref{eq:exactnk-B-DF}, but  \revise{use Theorem~\ref{thm:var-fd-crn} and Lemma~\ref{lem:diff-bernstein}(b) to change the right-hand-side of the second inequality to $\sum_{\nu_k\lambda_k}2\exp\left\{-n\frac{\kappa_{fde}^2\Delta_k^4}{\kappa_{fde}b_{C}\Delta_k^3+\sigma_C^2\Delta_2^2}\right\}$ leading to $c_k:=\frac{\kappa_{fde}^2\Delta_k^2}{\kappa_{fde}b_{C}\Delta_{\max}+\sigma_C^2}$, and use} $N_k\geq \lambda_k\nu_k$ with $\nu_k:=\frac{\sigma_0^2}{\kappa_{as}^2\Delta_k^{2}}$. 

{Case \eqref{eq:exactnk-A} (first order, no CRN):} Proof is the same as Case \eqref{eq:exactnk-A-DF}. In addition, while not needed for the proof of this case (but used in future Lemma~\ref{lem:stochastic-interp} for this case and next), we note that \begin{equation}
        \mbP\left(\|\BFEbar_k^g(N^0_k)\|> \kappa_{ge}\Delta_k \text{ i.o.}\right)=0,\label{eq:grad-error}%\kappa_{ge} gradient estimate error
    \end{equation} for any $\kappa_{ge}>0$, where $\BFEbar_k^g(N_k^0):=\left([\BFEbar^g_k(N^0_k)]_r\right)_{r=1}^d %[\BFEbar^g_k(N^0_k)]_2, \ldots, [\BFEbar^g_k(N^0_k)]_d
$ and $[\BFEbar^g_k(N^0_k)]_r = \frac{1}{n} \sum_{j=1}^n[\BFE^g_{k,j}]_r$. We can write for any constant $\kappa_{ge}>0$,
    \begin{align}
        \mbP\left(\|\BFEbar^g_k(N_k^0)\|>\kappa_{ge} \Delta_k\ \middle\vert\ \mcF_{k}\right) & \leq \sum_{r=1}^d\mbP\left(\left|[\BFEbar^g_k(N_k^0)]_r \right|>\frac{\kappa_{ge}}{d}\Delta_k\ \middle\vert\ \mcF_{k}\right) \nonumber \\ 
        & \leq \sum_{r=1}^d \mbP\left(\sup_{n\geq\lambda_k\nu_k}\left|[\BFEbar^g_k(N^0_k)]_r \right|>\frac{\kappa_{ge}}{d}\Delta_k\ \ \middle\vert\  \mcF_{k}\right) \nonumber \\
        &\leq \sum_{r=1}^d\sum_{n\geq\lambda_k\nu_k}\mbP\left(\left|\frac{1}{n}\sum_{j=1}^{n} [\BFE^g_{k,j}]_r \right|>\frac{\kappa_{ge}}{d}\Delta_k\ \middle\vert\ \mcF_{k}\right)\nonumber\\   
        & \leq d\sum_{n\geq\nu_k\lambda_k}2\exp\left\{-n\frac{\kappa^2_{ge}\Delta_k^2}{(2d\kappa_{ge}\Delta_kb_g+2d^2\sigma_g^2)}\right\}\nonumber\\%&= 2d \sum_{n\geq 0}\exp\left\{-c_k(\nu_k\lambda_k+n)\right) \nonumber\\
        & \revise{\le } 2d \sum_{n\geq 0}\exp\left\{-c_k(\nu_k\lambda_k+n)\right\}= 2d \frac{\exp\left\{-\lambda_kc_k\nu_k\right\}}{1-\exp\{-c_k\}}.
        \label{eq:gradient-element}
    \end{align}
    In the above, the \revise{fourth} inequality is obtained using Theorem~\ref{thm:var-fd-crn} alongside the sub-exponential bound in Lemma~\ref{lem:dependent-bernstein} \revise{and Remark \ref{remark:dif-prob}}, and the \revise{fifth inequality} is obtained by setting $c_k := \frac{\kappa^2_{ge} \Delta_k^2}{(2d\kappa_{ge}\revise{\Delta_{\max}}b_g+2d^2\sigma_g^2)}$. Since $\lambda_k=\lambda_0 \, (\log k)^{1+\epsilon_\lambda}$, and $\nu_k= \frac{\sigma_{0}^2}{\kappa_{as}^2\Delta_k^{4}}$, we have $c_k \nu_k = \mcO(1)$ and %Borel-Cantelli's first lemma for martingales~\cite{1991wil} 
    \revise{Lemma \ref{lem:borel-cantelli}} implies that \eqref{eq:grad-error} holds.

{Case \eqref{eq:exactnk-B} (first order, CRN):} With CRN, we can use the Newton-Leibniz theorem since the function $\BFEbar^g(\cdot,N_k) := \TD \BFGbar(\cdot,N_k) - \TD f(\cdot)$ is almost-surely continuous in its argument:
\begin{equation}
    \Ebar_k^0(N_k)-\Ebar_k^{\text{s}}(N_k) = \BFS_k^\intercal \BFEbar_k^g(N_k)+\int_{0}^1(\underbrace{\BFEbar^g(\BFX_k^0+t\BFS_k,N_k)}_{\revise{:=\BFEbar_k^{g,s}(N_k)}}-\BFEbar_k^g(N_k))^\intercal\BFS_k \mathrm{d}t.\label{eq:taylor-error}
\end{equation} Following the steps in Case~(\ref{eq:exactnk-A}) and letting 
     $\nu_k= \frac{\sigma_{0}^2}{\kappa_{as}^2\Delta_k^{2}}$, we have $c_k \nu_k = \mcO(1)$.   
    \revise{Lemma \ref{lem:borel-cantelli}} implies
    that \eqref{eq:grad-error} holds 
    for any $\kappa_{ge}>0$. For the second right hand side term in~\eqref{eq:taylor-error}, we follow the previous steps for $\mbP\left(\|\BFEbar_k^g(N_k)-\BFEbar_k^{g,s}(N_k)\|> \kappa_{gde}\Delta_k\right)$ but with $c_k := \frac{\kappa_{gde}^2\Delta_k}{\kappa_{gde}\revise{\widetilde{b}_B}+\revise{\widetilde{\sigma}_B^2}}$, $\nu_k := \frac{\sigma_{0}^2}{\kappa_{as}^2\Delta_{k}^2}$, \revise{where $\widetilde{b}_B$ and $\widetilde{\sigma}_B^2$ are defined in the same way as $b_B$ and $\sigma_B^2$ in Lemma \ref{lem:diff-bernstein}, but with $\sigma_g$ and $b_g$ replacing $\sigma_f$ and $b_f$, respectively.} %and apply Theorem~\ref{thm:var-fd-crn} alongside the sub-exponential bound in Lemma~\ref{lem:dependent-bernstein} \revise{and Remark \ref{remark:dif-prob}} with constant $\sigma_{df}= \sqrt{\Delta_k}\sigma_{g}$. 
    Again we have $\lambda_k=\lambda_0 \, (\log k)^{1+\epsilon_\lambda}$, and \[c_k\nu_k=\frac{\kappa_{gde}^2\Delta_k}{\kappa_{gde} \revise{\widetilde{b}_B}+\revise{\widetilde{\sigma}_B^2}}\frac{\sigma_{0}^2}{\kappa_{as}^2\Delta_{k}^2}\geq \frac{\kappa_{gde}^2\sigma_{0}^2}{\kappa_{as}^2\Delta_{\max}(\kappa_{gde}b_g+\sigma_g^2)} = \mcO(1),\] allowing us to invoke %Borel-Cantelli's first lemma for martingales~\cite{1991wil} 
    \revise{Lemma \ref{lem:borel-cantelli}} to conclude that 
    \begin{equation}
       \mbP\left(\|\BFEbar_k^g(N_k)-\BFEbar_k^{g,s}(N_k)\|> \kappa_{gde}\Delta_k \text{ i.o.}\right)=0.\label{eq:grad-two-points} 
    \end{equation} From~\eqref{eq:gradient-element} and~\eqref{eq:grad-error}, conclude that the assertion holds.
    \end{proof}
    %This leads to $\mbP\left(\|\BFEbar_k^{gs}(N_k)-\BFEbar_k^g(N_k)\|\geq \kappa_{fde}\Delta_k \text{ i.o.}\right)=0$ 

%\begin{align*}
%    (i) & \text{ Zeroth-order } &  \text{ no CRN } &  \text{ --- } & & N_k =  O(\Delta_k^{-4}\lambda_k), \\
%    (ii) & \text{ Zeroth-order } &  \text{ CRN } &  \text{ discontinuous  } & & N_k\geq \sigmahat_F^2\Delta_k^{-3}\lambda_k, \\
%    (iii) & \text{ Zeroth-order } &  \text{ CRN } &  \text{ continuous  } & & N_k\geq \sigmahat_F^2\Delta_k^{-2}\lambda_k, \\
%    (iv) & \text{ First-order } &  \text{ no CRN } & \text{ --- } & & N_k\geq \max\{\sigmahat_{\BFG}^2\Delta_k^{-2},\sigmahat_F^2\Delta_k^{-4}\}\lambda_k, \\
%    (v) & \text{ First-order } &  \text{ CRN } &  \text{ nonsmooth  } & & N_k\geq \sigmahat_{\BFG}^2\Delta_k^{-2}\lambda_k, \\
%    (vi) & \text{ First-order } &  \text{ CRN } &  \text{ smooth  } & & N_k\geq \lambda_k, \\
%\end{align*}

The following corollary of Theorem~\ref{thm:asfinite}, stated without proof, bounds the estimation error almost surely when $k$ is sufficiently large.
\begin{corollary}%[Bounded stochastic error with high probability] 
    Suppose Assumption~\ref{assum:martingale} holds. Then, 
    \begin{itemize}
        \item[(a)]  for Case~\eqref{eq:exactnk-A-DF}--\eqref{eq:exactnk-C}, we have that for any positive constant $\kappa_{fde}>0$, 
        \begin{equation}
        \mbP\left(|\Ebar_k^0(N_k^0)-\Ebar_k^{\text{s}}(N_k^{\text{s}})|> \kappa_{fde} \text{ i.o.}\right)=0,\label{eq:error-fixed-upper-bound};
        \end{equation}
        \item[(b)] for Case~\eqref{eq:exactnk-A}--\eqref{eq:exactnk-C}, for any positive constants $\kappa_{ge}>0$, we have that
    \begin{equation}
        \mbP\left(\|\BFEbar_k^g(N_k^0)\|> \kappa_{ge} \text{ i.o.}\right)=0.\label{eq:g-error-fixed-upper-bound}
    \end{equation} 
    \end{itemize}
    \label{cor:bounded-error}
\end{corollary}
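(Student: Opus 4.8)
The plan is to deduce both parts from Theorem~\ref{thm:asfinite} for the five cases it covers, and to supply a short self-contained argument for Case~\eqref{eq:exactnk-C}, which is the only case the theorem omits. The unifying observation is that the trust-region radius is uniformly bounded, $\Delta_k \le \Delta_{\max}$ for all $k$. This follows by induction from $\Delta_0 < \Delta_{\max}$ together with the (TM) update, since on success $\Delta_{k+1} = \gamma_1\Delta_k \wedge \Delta_{\max} \le \Delta_{\max}$ and on failure $\Delta_{k+1} = \gamma_2\Delta_k < \Delta_k$. Consequently, a bound that decays like $\Delta_k^2$ (or $\Delta_k$) is, a fortiori, bounded by a fixed constant, which is exactly the weaker claim the corollary asks for.

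For part~(a) in Cases~\eqref{eq:exactnk-A-DF}--\eqref{eq:exactnk-B}, I would fix $\kappa_{fde}>0$ and invoke Theorem~\ref{thm:asfinite} with the constant $\kappa_{fde}/\Delta_{\max}^2$ in place of $\kappa_{fde}$, giving $\mbP(|\Ebar_k^{\text{s}}(N_k^{\text{s}}) - \Ebar_k^0(N_k^0)| > (\kappa_{fde}/\Delta_{\max}^2)\Delta_k^2 \text{ i.o.}) = 0$. Since $(\kappa_{fde}/\Delta_{\max}^2)\Delta_k^2 \le \kappa_{fde}$, the event $\{|\Ebar_k^0(N_k^0) - \Ebar_k^{\text{s}}(N_k^{\text{s}})| > \kappa_{fde}\}$ is contained in $\{|\Ebar_k^{\text{s}}(N_k^{\text{s}}) - \Ebar_k^0(N_k^0)| > (\kappa_{fde}/\Delta_{\max}^2)\Delta_k^2\}$, so the associated $\limsup$ events are nested and \eqref{eq:error-fixed-upper-bound} follows. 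The identical substitution applied to \eqref{eq:grad-error} (with $\kappa_{ge}/\Delta_{\max}$ in place of $\kappa_{ge}$) yields part~(b) for Cases~\eqref{eq:exactnk-A} and \eqref{eq:exactnk-B}.

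It remains to handle Case~\eqref{eq:exactnk-C}, where $N_k = \lceil \sigma_0^2 \lambda_k / \kappa_{as}^2 \rceil$ is deterministic and does not depend on $\Delta_k$. Here I would bound directly: for part~(a) split $|\Ebar_k^0(N_k) - \Ebar_k^{\text{s}}(N_k)| \le |\Ebar_k^0(N_k)| + |\Ebar_k^{\text{s}}(N_k)|$, and for part~(b) decompose $\|\BFEbar_k^g(N_k)\|$ coordinate-wise exactly as in \eqref{eq:gradient-element}. Applying the conditional Bernstein bound of Remark~\ref{remark:dif-prob} with the fixed threshold $\kappa_{fde}/2$ (respectively $\kappa_{ge}/d$) and using $N_k \ge \sigma_0^2\lambda_k/\kappa_{as}^2$ with $\lambda_k = \lambda_0(\log k)^{1+\epsilon_\lambda}$, each conditional probability is at most $2\exp\{-c(\log k)^{1+\epsilon_\lambda}\}$ for some $c>0$; taking expectations over $\mcF_k$ preserves this bound.

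The only genuine subtlety I anticipate is the summability step in Case~\eqref{eq:exactnk-C}. Because $\epsilon_\lambda>0$, we have $(\log k)^{1+\epsilon_\lambda}/\log k = (\log k)^{\epsilon_\lambda} \to \infty$, so $c(\log k)^{1+\epsilon_\lambda} \ge (1+\delta)\log k$ eventually for any fixed $\delta>0$, whence $\sum_k 2\exp\{-c(\log k)^{1+\epsilon_\lambda}\} \le \sum_k 2k^{-(1+\delta)} < \infty$. Borel--Cantelli's first lemma (Lemma~\ref{lem:borel-cantelli}) then gives $\mbP(\cdot\text{ i.o.})=0$, finishing both parts for Case~\eqref{eq:exactnk-C}. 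I note that this direct route uses neither CRN nor any sample-path regularity assumption beyond Assumption~\ref{assum:martingale} and the prescribed growth of $\lambda_k$, which is consistent with the paper's observation that the balance condition is not needed for Case~\eqref{eq:exactnk-C}.
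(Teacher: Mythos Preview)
Your argument is correct. For Cases~\eqref{eq:exactnk-A-DF}--\eqref{eq:exactnk-B} you use Theorem~\ref{thm:asfinite} (and the companion bound~\eqref{eq:grad-error}) as a black box, rescaling the free constant by $\Delta_{\max}^{-2}$ (or $\Delta_{\max}^{-1}$) and invoking the set inclusion $\{|\cdot|>\kappa_{fde}\}\subset\{|\cdot|>\kappa'\Delta_k^2\}$. For Case~\eqref{eq:exactnk-C} you redo the Bernstein calculation with a fixed threshold and exploit that $N_k$ is deterministic, so no supremum over $n$ is needed; the summability step via $(\log k)^{1+\epsilon_\lambda}\ge (1+\delta)\log k$ eventually is exactly right.

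The paper's own route is slightly different: rather than reducing to Theorem~\ref{thm:asfinite}, it reruns the proof machinery of that theorem uniformly over all cases, replacing the threshold $\kappa_{fde}\Delta_k^2$ by the constant $\kappa_{fde}$. This makes $c_k=\frac{\kappa_{fde}^2}{2(b_f\kappa_{fde}+\sigma_f^2)}$ independent of $\Delta_k$, while $\nu_k$ (which scales like $\Delta_k^{-\beta}$ in the adaptive cases) is bounded below since $\Delta_k\le\Delta_{\max}$; hence $c_k\nu_k$ is bounded away from zero and the same Borel--Cantelli step applies. Your reduction is a bit more economical because it reuses the theorem wholesale for five of the six cases, whereas the paper's approach is more uniform in that it treats all cases, including~\eqref{eq:exactnk-C}, by a single modification of the exponent $c_k$. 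Either way the essential ingredient is the same: $\Delta_k\le\Delta_{\max}$ decouples the threshold from $\Delta_k$, after which the growth $\lambda_k=\lambda_0(\log k)^{1+\epsilon_\lambda}$ forces summability.
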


Several remarks about Corollary~\ref{cor:bounded-error} are noteworthy: 
    \begin{enumerate}
        \item[(a)] Proof technique: As in the proof of Theorem~\ref{thm:asfinite}, almost sure bounds can be shown by summable tail probabilities. Specifically, the $c_k$ used in the proof of Theorem~\ref{thm:asfinite} in this case will be $c_k=\frac{\kappa_{fde}^2}{2(b_f\kappa_{fde}+\sigma_f^2)}$, yielding $c_k\nu_k=\mcO(1)$.
        \item[(b)] Unlike Theorem~\ref{thm:asfinite}, part (a) of this corollary also holds for Case \ref{eq:exactnk-C}. Two important implications of this result are (i) $\mbP\left(|\Fbar_k^i(N_k^i)-f_k^i|>\kappa_{fe}\text{ i.o.}\right)=0$ for any $\kappa_{fe}$, and (ii) $\mbP\left(\liminf_{k\to\infty}\Fbar_k^i(N_k^i)=-\infty\right)=0$.
        % \begin{itemize}
        %     \item[(i)]  $\mbP\left(|\Fbar_k^0(N_k^0)-f_k|>\kappa_{fe}\text{ i.o.}\right)=0$ for any $\kappa_{fe}$, and 
        %     \item[ii)] $\mbP\left(\liminf_{k\to\infty}\Fbar_k^0(N_k^0)=-\infty\right)=0$.
        % \end{itemize}
        
        \item[(c)] Part (b) is essential for showing that $\|\TD M_k-\TD f_k\|\as 0$ (See Lemma~\ref{lem:deltaconverge}).
    \end{enumerate}

\section{Strong Consistency} \label{sec:conv} 
We now leverage Theorem~\ref{thm:asfinite} to state and prove the strong consistency theorem for ASTRO(-DF). The main result in this section is important but only for \revise{bookkeeping} purposes, ahead of our treatment of the key issue of complexity in Section~\ref{sec:complexity}. In preparation, we need two further assumptions that are classical within the TR context. The first of these, Assumption~\ref{assum:fcd}, is usually called the \emph{Cauchy decrease} condition and ensures that the candidate solution identified in Step 4 of ASTRO(-DF) satisfies sufficient model decrease. This is formally stated in Assumption~\ref{assum:fcd}. This is followed by Assumption~\ref{assum:hessian-norm} which stipulates that the model Hessians in ASTRO(-DF) are bounded above by a constant.

\begin{assumption}[Reduction in Subproblem]
    \label{assum:fcd}
    %($\kappa_{fcd}$: fraction of the Cauchy decrease) 
    For some $\kappa_{fcd}\in(0,1]$ and all $k$, $M_k^0- M_k^{\text{s}}\ge \kappa_{fcd}\left(M_k^0 - M_k(\BFX_k^0+\BFS^c_k)\right)$,
    where $\BFS_k^c$ is the Cauchy step (see Defn.~\ref{defn:cauchyred}).
\end{assumption}

The subproblem (Step \ref{ASTRO:TRsubprob} in Algorithm \ref{alg:ASTRO} and \ref{alg:ASTRODF}) demands minimizing the model but in practice we only ``approximately'' minimize the model and the approximation is sufficient if it is as good as or better than a Cauchy reduction.

\begin{assumption}[Bounded Hessian in Norm]
In ASTRO(-DF), the model Hessian %$\sfB_k$ is uniformly bounded, i.e.,  $\|\sfB_k\|\leq\kappa_\sfB\ \forall k,$ for some $\kappa_\sfB\in(0,\infty)$ almost surely. In ASTRO-DF, similarly, the model Hessian 
$\sfH_k$ ($\sfB_k$) satisfy $\|\sfH_k\|\leq\kappa_\sfH$ ($\|\sfB_k\|\leq\kappa_\sfH$) for all $k$ and some $\kappa_\sfH\in(0,\infty)$ almost surely.\label{assum:hessian-norm}
\end{assumption}

\subsection{Main Result}
We are now ready to state and discuss the strong consistency of ASTRO(-DF) under  Assumptions~\ref{assum:lipschitz}--\ref{assum:hessian-norm}. 

\begin{theorem}[Strong Consistency]\label{thm:Convergence} \revise{Suppose Assumption~\ref{assum:martingale}--\ref{assum:hessian-norm} hold in addition to the standing Assumption~\ref{assum:lipschitz} and Assumption~\ref{assum:varcont}.
Let $\{\BFX_k\}$ be the sequence of solutions generated by Algorithm~\ref{alg:ASTRODF} or~\ref{alg:ASTRO} using adaptive sample sizes in Case~\eqref{eq:exactnk-A-DF} and~\eqref{eq:exactnk-A}, Case~\eqref{eq:exactnk-B-DF} and \eqref{eq:exactnk-B} under Assumption~\ref{assum:holder}, or Case~\eqref{eq:exactnk-C-DF} under Assumption~\ref{assum:lip-F}, or Case~\eqref{eq:exactnk-C} under Assumption~\ref{assum:lipschitzgradpaths}. Suppose $\lambda_k=\lambda_0(\log k)^{1+\epsilon_\lambda}$ for some $\epsilon_\lambda\in(0,1)$ and $\lambda_0\geq 2$. Then the sequence $\{\BFX_k\}$ satisfies}
%Suppose Assumptions~\ref{assum:lipschitz}--\ref{assum:hessian-norm} hold.
%Then, under Case~\eqref{eq:exactnk-A-DF},~\eqref{eq:exactnk-B-DF},~\eqref{eq:exactnk-C-DF},~\eqref{eq:exactnk-A},~\eqref{eq:exactnk-B}, and~\eqref{eq:exactnk-C}, ASTRO(-DF) generates a sequence of iterates that satisfies 
    \begin{equation}\label{eq:limg}
        \| \TD f_k \| \as 0 \mbox{ as } k \to \infty.
    \end{equation}
\end{theorem}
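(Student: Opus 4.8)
The plan is to reduce the statement to a pathwise argument on a probability-one event and then run a standard trust-region contradiction argument, with the stochastic estimation error controlled throughout by Theorem~\ref{thm:asfinite} and Corollary~\ref{cor:bounded-error}. Concretely, let $\Omega_0$ be the event on which, for all sufficiently large $k$, the balance condition $|\Ebar_k^{\text{s}}(N_k^{\text{s}})-\Ebar_k^0(N_k^0)| \leq \kappa_{fde}\Delta_k^2$ holds together with the vanishing model-gradient error $\|\TD M_k - \TD f_k\| \to 0$ (the latter via the referenced Lemma~\ref{lem:deltaconverge}, which rests on Corollary~\ref{cor:bounded-error}(b) and the fully-linear-type model bounds established in the MU step). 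By Theorem~\ref{thm:asfinite} and Borel--Cantelli (Lemma~\ref{lem:borel-cantelli}), $\mbP(\Omega_0)=1$, so it suffices to prove~\eqref{eq:limg} on each sample path in $\Omega_0$; this framing also absorbs the case split, since Case~\eqref{eq:exactnk-C} enters $\Omega_0$ through $\|\TD M_k-\TD f_k\|\as 0$ rather than through the balance condition.

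First I would show that the TR radius vanishes, $\Delta_k \as 0$. The engine here is that $f$ is bounded below together with the fact that, on $\Omega_0$, the estimated success ratio $\rhohat_k$ faithfully tracks the true success ratio once $k$ is large: the numerator error is $\mcO(\Delta_k^2)$, while the model decrease (via the Cauchy reduction of Assumption~\ref{assum:fcd} and the bounded Hessian of Assumption~\ref{assum:hessian-norm}) is of order $\|\TD M_k\|\min\{\|\TD M_k\|/\kappa_\sfH, \Delta_k\}$, so every accepted step produces a genuine reduction in $f$ of the same order up to a negligible $\mcO(\Delta_k^2)$ term. Summing these reductions and invoking boundedness below forces the accumulated true decrease to be finite, which in turn---through the TR management rule that shrinks $\Delta_k$ by $\gamma_2$ on every unsuccessful step and inflates it only on successful ones---yields $\Delta_k \to 0$.

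Next I would establish $\liminf_k \|\TD f_k\| = 0$ by contradiction. Suppose on some path in $\Omega_0$ that $\|\TD f_k\| \geq \varepsilon$ for all large $k$. Since $\|\TD M_k - \TD f_k\| \to 0$, we also have $\|\TD M_k\| \geq \varepsilon/2$ eventually. Because $\Delta_k \to 0$, eventually $\Delta_k \leq \mu\varepsilon/2$, so the criticality test $\|\TD M_k\|/\Delta_k \geq 1/\mu$ is automatically satisfied; and for $\Delta_k$ small enough the Cauchy decrease dominates the $\mcO(\Delta_k^2)$ estimation error, so $\rhohat_k > \eta$ and the step is accepted. An accepted step inflates the radius by $\gamma_1 > 1$, contradicting $\Delta_k \to 0$. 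Hence $\liminf_k \|\TD f_k\| = 0$.

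Finally I would upgrade the $\liminf$ to a genuine limit by the classical crossing argument. Assuming toward a contradiction that $\limsup_k \|\TD f_k\| \geq 2\varepsilon > 0$ while $\liminf_k \|\TD f_k\| = 0$, one extracts infinitely many index intervals over which $\|\TD f_k\|$ travels from $\varepsilon$ up to $2\varepsilon$. Over each such interval the iterate must move a distance bounded below by a constant multiple of $\varepsilon$ (using the $\kappa_{Lg}$-Lipschitz continuity of $\TD f$ from Assumption~\ref{assum:lipschitz}), and the successful steps comprising it each contribute a true decrease proportional to their step length, making the total decrease infinite and contradicting boundedness below of $f$. I expect this last upgrade, together with the bookkeeping needed to ensure that all the ``eventually'' statements hold simultaneously on $\Omega_0$ across both the zeroth- and first-order cases, to be the main obstacle; the per-iteration decrease estimates and the radius dynamics are otherwise routine.
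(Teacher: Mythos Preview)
Your overall architecture---reduce to a probability-one event, show $\Delta_k\to 0$, get $\liminf\|\TD f_k\|=0$ by contradiction, then upgrade to $\lim$ via a crossing argument---is exactly the route the paper takes (via Lemmas~\ref{lem:stochastic-interp}--\ref{lem:bounded-delta}). For Cases~\eqref{eq:exactnk-A-DF}--\eqref{eq:exactnk-B} your sketch is essentially correct and close to the paper's proof, with one minor wrinkle: your crossing argument reaches a contradiction via ``infinite total decrease in $f$,'' whereas the paper shows the estimated incumbent values $\Fbar_k^0(N_k^0)$ form an eventually decreasing bounded-below sequence, hence $\|\BFX_{t_j}^0-\BFX_{\ell_j}^0\|\to 0$, and then invokes Lipschitz $\TD f$ directly. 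Both variants work once stated carefully.

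The genuine gap is Case~\eqref{eq:exactnk-C}. Both of your substantive steps lean on the balance condition $|\Ebar_k^0-\Ebar_k^{\text{s}}|=\mcO(\Delta_k^2)$, but Theorem~\ref{thm:asfinite} explicitly excludes this case. Concretely: (i) in your $\Delta_k\to 0$ step you write that every accepted step gives a genuine reduction in $f$ ``up to a negligible $\mcO(\Delta_k^2)$ term''; in Case~\eqref{eq:exactnk-C} that error term is \emph{not} $\mcO(\Delta_k^2)$, so summing pathwise does not yield $\sum\Delta_k^2<\infty$. The paper sidesteps this by exploiting that $N_k$ is deterministic in~\eqref{eq:exactnk-C}, so $\mbE[\Ebar_k^0-\Ebar_k^{\text{s}}\mid\mcF_k]=0$, and takes expectations to get $\mbE[\sum\Delta_k^2]<\infty$ (see the Case~\eqref{eq:exactnk-C} branch in the proof of Lemma~\ref{lem:deltaconverge}). (ii) In your crossing argument you claim ``successful steps each contribute a true decrease proportional to their step length,'' which again implicitly converts estimated to true decrease via the balance condition. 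The paper instead runs the entire crossing argument in the \emph{estimated} function values $\Fbar_k^0(N_k^0)$ and only uses the weaker constant bound $|\Fbar_k^0-f_k^0|\leq\kappa_{fe}$ of Corollary~\ref{cor:bounded-error}(a), which does hold for~\eqref{eq:exactnk-C}. Your one-line remark that Case~\eqref{eq:exactnk-C} ``enters $\Omega_0$ through $\|\TD M_k-\TD f_k\|\to 0$ rather than through the balance condition'' does not repair either (i) or (ii).
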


We make two observations before providing a full proof of Theorem~\ref{thm:Convergence}.
\begin{enumerate} \item[(a)] The sequence $\{\BFX_k^0\}$ refers to the random sequence located at the center of the TR during each iterate of ASTRO or ASTRO-DF. The assertion of Theorem~\ref{thm:Convergence} guarantees almost sure convergence to zero of the corresponding sequence $\{\TD f_k\}_{k \geq 1}$ of true gradients observed at the iterates. This automatically implies convergence in probability. However, without further assumptions, nothing can be said about %$L_r$ 
convergence of $\{\TD f_k\}_{k \geq 1}$ \revise{in the $r$-th moment $\mbE[\|\nabla f_k \|^r]$, with $r\geq 1$}.
\item[(b)] An important result needed for the strong consistency is showing that if the TR radius becomes too small with respect to the gradient (of the function or model for ASTRO(-DF), respectively), then a success event and progress in optimization must occur leading to expansion in the TR in the succeeding iteration. This will lead to stating that before $\|\TD f_k\|$ hits an $\varepsilon$-distance from 0, the TR radius remains bounded below as a function of $\varepsilon.$ For both ASTRO(-DF), we demonstrate this in Section \ref{sec:proof-consistency}, specifically in Lemma~\ref{lem:astroSuccess} and the proof for ASTRO-DF, respectively.
\end{enumerate}

\subsection{Proof of Theorem~\ref{thm:Convergence}} \label{sec:proof-consistency}

The proof of Theorem~\ref{thm:Convergence} follows from four lemmas that are also important in their own right since they provide intuition on the behavior of ASTRO(-DF).  We start with Lemma~\ref{lem:stochastic-interp}, which characterizes the model gradient error in each of~\eqref{eq:exactnk-A-DF}--\eqref{eq:exactnk-B}. Lemma~\ref{lem:stochastic-interp} combined with Theorem \ref{thm:asfinite} ensures the local model becomes stochastically fully linear almost surely for large $k$.
%\hl{this lemma needs to be stated precisely:  what does "given a large enough constant" mean? Do you mean that "there exists a constant" or do you mean "for any positive constant"? }\sara{changed to there exists a constant}
\begin{lemma} \label{lem:stochastic-interp}
%Let Assumption \ref{assum:lipschitz}--\ref{assum:hessian-norm} hold. Then the trajectories generated by Algorithm~\ref{alg:ASTRODF} or \ref{alg:ASTRO} via Case~\eqref{eq:exactnk-A-DF},~\eqref{eq:exactnk-B-DF},~\eqref{eq:exactnk-C-DF},~\eqref{eq:exactnk-A}, and~\eqref{eq:exactnk-B}, satisfy $$\mbP\left(\|\TD M_k(\BFx)-\TD f(\BFx)\|>\kappa_{mge}\Delta_k\text{ i.o.}\right)=0,$$ for all $\BFx\in\mcB(\BFX_k^0;\Delta_k)$ and for a constant 
\revise{Suppose that the assumptions outlined in Theorem \ref{thm:Convergence} hold for the relevant cases. Then the trajectories generated by Algorithm~\ref{alg:ASTRODF} or \ref{alg:ASTRO} via Case~\eqref{eq:exactnk-A-DF},\\~\eqref{eq:exactnk-B-DF},~\eqref{eq:exactnk-C-DF},~\eqref{eq:exactnk-A}, and~\eqref{eq:exactnk-B}, satisfy $$\mbP\left(\|\TD M_k(\BFx)-\TD f(\BFx)\|>\kappa_{mge}\Delta_k\text{ i.o.}\right)=0,$$ for all $\BFx\in\mcB(\BFX_k^0;\Delta_k)$ and for a constant }
\begin{equation}
    \kappa_{mge}\geq(\kappa_{\sfH}+\kappa_{ge}+\kappa_{Lg})\vee\sqrt{d}(\kappa_{Lg}/2+\kappa_{fde}/\kappa_{Lg}),\label{eq:kappa-mge}    
\end{equation}
given any positive $\kappa_{ge}>0$ and $\kappa_{fde}>0$.
\end{lemma}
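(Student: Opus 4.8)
The plan is to handle the first-order cases~\eqref{eq:exactnk-A} and~\eqref{eq:exactnk-B} separately from the zeroth-order cases~\eqref{eq:exactnk-A-DF}--\eqref{eq:exactnk-C-DF}, since the model $M_k$ is constructed differently in each, and then to combine the two resulting constants through the maximum in~\eqref{eq:kappa-mge}. In both settings the core is a purely deterministic error decomposition, carried out conditionally on the ``good'' events whose complements, by Theorem~\ref{thm:asfinite} and Lemma~\ref{lem:borel-cantelli}, occur only finitely often almost surely; since a finite union of probability-zero ``i.o.'' events is again probability-zero, the stated ``i.o.'' conclusion will follow once the deterministic bounds are in place on each good event. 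Throughout I would fix an arbitrary $\BFx\in\mcB(\BFX_k^0;\Delta_k)$, so that $\|\BFx-\BFX_k^0\|\leq\Delta_k$, and abbreviate $\TD f_k=\TD f(\BFX_k^0)$.

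For the first-order cases the model gradient is $\TD M_k(\BFx)=\BFGbar_k+\sfH_k(\BFx-\BFX_k^0)$, so inserting $\pm\TD f_k$ gives the decomposition
\[
\TD M_k(\BFx)-\TD f(\BFx)=\big(\BFGbar_k-\TD f_k\big)+\sfH_k(\BFx-\BFX_k^0)+\big(\TD f_k-\TD f(\BFx)\big).
\]
I would bound the first term by $\kappa_{ge}\Delta_k$ using~\eqref{eq:grad-error} from Theorem~\ref{thm:asfinite} (whose complement is an ``i.o.''-null event), the second by $\kappa_{\sfH}\Delta_k$ via Assumption~\ref{assum:hessian-norm} together with $\|\BFx-\BFX_k^0\|\leq\Delta_k$, and the third by $\kappa_{Lg}\Delta_k$ via Assumption~\ref{assum:lipschitz}. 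The triangle inequality then yields $\|\TD M_k(\BFx)-\TD f(\BFx)\|\leq(\kappa_{ge}+\kappa_{\sfH}+\kappa_{Lg})\Delta_k$ for all large $k$, accounting for the first entry of the maximum in~\eqref{eq:kappa-mge}.

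For the zeroth-order cases the model gradient comes from the interpolation conditions on the design set $\mcX_k=\{\BFX_k^0,\ldots,\BFX_k^p\}$. Writing $\BFs^i=\BFX_k^i-\BFX_k^0$, a Taylor expansion of $f$ with its $\kappa_{Lg}$-Lipschitz gradient gives $f_k^i-f_k^0=(\BFs^i)^\intercal\TD f_k+R^i$ with $|R^i|\leq\tfrac12\kappa_{Lg}\|\BFs^i\|^2\leq\tfrac12\kappa_{Lg}\Delta_k^2$, whereas the interpolation equations read $(\BFs^i)^\intercal\TD M_k=\Fbar_k^i-\Fbar_k^0$. Subtracting and collecting the estimation-error differences $\Ebar_k^i-\Ebar_k^0$ produces a linear system $\mathsf{S}\,(\TD M_k-\TD f_k)=\bm{r}+\BFdelta$, where $\mathsf{S}$ has rows $(\BFs^i)^\intercal$, $\bm{r}=(R^i)_i$, and $\BFdelta=(\Ebar_k^i-\Ebar_k^0)_i$. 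Using the $\Delta_k$-scaled well-poisedness of the design set (so that $\|\mathsf{S}^{-1}\|$ scales like $\Delta_k^{-1}$) together with the balance bound $|\Ebar_k^i-\Ebar_k^0|\leq\kappa_{fde}\Delta_k^2$---which Theorem~\ref{thm:asfinite} supplies for each design point via Lemma~\ref{lem:diff-bernstein}---I would obtain a bound of the form $\|\TD M_k-\TD f_k\|\leq\sqrt{d}\,(\kappa_{Lg}/2+\kappa_{fde}/\kappa_{Lg})\Delta_k$; since $\TD M_k$ is constant over the ball for the linear interpolation model used here, passing from $\TD f_k$ to $\TD f(\BFx)$ costs only a further Lipschitz term absorbed into the same constant, giving the second entry of the maximum in~\eqref{eq:kappa-mge}.

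The main obstacle I expect is the zeroth-order step: bounding $\|\mathsf{S}^{-1}\|$ requires the precise, $\Delta_k$-scaled and well-poised design geometry of ASTRO-DF, and it is exactly this geometry---combined with the balance bound $\kappa_{fde}\Delta_k^2$ on estimation-error differences and the Taylor remainder $\tfrac12\kappa_{Lg}\Delta_k^2$---that yields the specific constant $\sqrt{d}\,(\kappa_{Lg}/2+\kappa_{fde}/\kappa_{Lg})$ rather than a generic poisedness-constant times $\Delta_k$. Once both deterministic bounds hold on their respective good events, the conclusion is immediate: each good event's complement is ``i.o.''-null by Theorem~\ref{thm:asfinite} and Lemma~\ref{lem:borel-cantelli}, the finitely many design-point events intersect to a single almost-sure event, and on it $\|\TD M_k(\BFx)-\TD f(\BFx)\|\leq\kappa_{mge}\Delta_k$ eventually for the $\kappa_{mge}$ in~\eqref{eq:kappa-mge}.
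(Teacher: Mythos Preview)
Your proposal is correct and follows essentially the same approach as the paper: the first-order decomposition is identical, and the zeroth-order argument rests on the same ingredients (well-poisedness, Taylor remainder, and the balance bound $|\Ebar_k^i-\Ebar_k^0|\le\kappa_{fde}\Delta_k^2$ from Theorem~\ref{thm:asfinite}). The only difference is that the paper does not derive the interpolation-error bound from scratch but instead invokes an external result (Lemma~2.9 of~\cite{Sara2018ASTRO}), which directly gives
\[
\|\TD M_k(\BFx)-\TD f(\BFx)\|\le \frac{\sqrt{d}\kappa_{Lg}\Lambda}{2}\Delta_k+\frac{1}{\kappa_{Lg}}\frac{\sqrt{\sum_{i=1}^p(\Ebar_k^i-\Ebar_k^0)^2}}{\Delta_k}
\]
for \emph{all} $\BFx\in\mcB(\BFX_k^0;\Delta_k)$ and any $\Lambda$-poised set (then takes $\Lambda=1$ for coordinate bases). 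This sidesteps your linear-system inversion and also covers quadratic interpolation models, so your assumption that ``$\TD M_k$ is constant over the ball for the linear interpolation model'' is unnecessary and in fact not generally true in ASTRO-DF; correspondingly, the extra Lipschitz term you say is ``absorbed'' when passing from $\TD f_k$ to $\TD f(\BFx)$ is already built into the cited bound rather than added on afterward.
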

\begin{proof} See Section~\ref{sec:prooflemma:stochastic-interp} in Appendix.
\end{proof}

We next demonstrate that in ~\eqref{eq:exactnk-A-DF}--\eqref{eq:exactnk-C}, the sequence of TR radii generated by ASTRO(-DF) converge to zero almost surely, and that the sequence of errors in model gradients at the iterates converges almost surely to zero.   
\begin{lemma} \label{lem:deltaconverge}
    \revise{Suppose that the assumptions outlined in Theorem \ref{thm:Convergence} hold for the relevant cases. Then, the trajectory generated by Algorithm~\ref{alg:ASTRODF} or \ref{alg:ASTRO} via} Case~\eqref{eq:exactnk-A-DF},\\~\eqref{eq:exactnk-B-DF},~\eqref{eq:exactnk-C-DF},~\eqref{eq:exactnk-A},~\eqref{eq:exactnk-B}, and~\eqref{eq:exactnk-C} satisfy 
    \begin{enumerate}
    \item[(a)] $\Delta_k \as 0 \text{ as } k \rightarrow \infty$;
    \item[(b)] $\|\TD M_k-\TD f_k\| \as 0$.
\end{enumerate}
\end{lemma}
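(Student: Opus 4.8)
The plan is to prove part (a) first and then read off part (b), treating the most structured case~\eqref{eq:exactnk-C} separately because Theorem~\ref{thm:asfinite} excludes it. For part (a) in the five cases covered by Theorem~\ref{thm:asfinite}, I would show that every sufficiently late \emph{successful} iteration decreases the true objective by an amount of order $\Delta_k^2$, and then use boundedness of $f$ from below to make those decreases summable. Concretely, a successful iteration forces both $\rhohat_k>\eta$ and the criticality condition $\|\TD M_k\|\geq \Delta_k/\mu$ in the (TM) step. Feeding the latter into the Cauchy-decrease inequality (Assumption~\ref{assum:fcd}) together with the bounded-Hessian bound (Assumption~\ref{assum:hessian-norm}) gives a guaranteed model reduction $M_k^0-M_k^{\text{s}}\geq c\,\Delta_k^2$, where $c>0$ depends only on $\kappa_{fcd},\mu,\kappa_\sfH$. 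Since $\rhohat_k>\eta$ means $\Fbar_k^0(N_k^0)-\Fbar_k^{\text{s}}(N_k^{\text{s}})>\eta(M_k^0-M_k^{\text{s}})$, subtracting the estimation errors yields $f_k^0-f_k^{\text{s}}\geq \eta c\,\Delta_k^2-|\Ebar_k^0(N_k^0)-\Ebar_k^{\text{s}}(N_k^{\text{s}})|$. The balance condition of Theorem~\ref{thm:asfinite} guarantees that eventually, almost surely, $|\Ebar_k^0(N_k^0)-\Ebar_k^{\text{s}}(N_k^{\text{s}})|\leq \kappa_{fde}\Delta_k^2$; choosing the free constant $\kappa_{fde}<\eta c$ then gives $f_k^0-f_k^{\text{s}}\geq c'\Delta_k^2>0$ on every large enough successful iteration.

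To finish part (a), note that the incumbent moves only on successful iterations and (for large $k$) each such move strictly decreases $f$, so $\{f_k^0\}$ is eventually nonincreasing; boundedness below then forces $\sum_{k\text{ successful}}\Delta_k^2<\infty$, whence $\Delta_k\to 0$ along the successful subsequence. To upgrade this to $\Delta_k\as 0$ over all $k$, I use the radius dynamics: $\Delta_k$ grows only at a success (by at most $\gamma_1$) and is multiplied by $\gamma_2<1$ otherwise. Thus on the block of iterations following a success the radius is largest immediately after that success, where it is at most $\gamma_1$ times the (vanishing) successful radius at that iteration; as the successful radii tend to zero so do these block-maxima, giving $\Delta_k\to 0$. (If successes occur only finitely often, the radius is eventually scaled by $\gamma_2<1$ at every step and the conclusion is immediate.)

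For part (b), the first-order cases~\eqref{eq:exactnk-A}--\eqref{eq:exactnk-C} are handled directly: the model gradient at the center equals the sample-average gradient, so $\TD M_k-\TD f_k=\BFEbar_k^g(N_k^0)$, and Corollary~\ref{cor:bounded-error}(b) states $\mbP(\|\BFEbar_k^g(N_k^0)\|>\kappa_{ge}\text{ i.o.})=0$ for every $\kappa_{ge}>0$; intersecting over a sequence $\kappa_{ge}\downarrow 0$ gives $\|\TD M_k-\TD f_k\|\as 0$. This also settles part (b) for~\eqref{eq:exactnk-C} without reference to (a). For the zeroth-order cases~\eqref{eq:exactnk-A-DF}--\eqref{eq:exactnk-C-DF}, Lemma~\ref{lem:stochastic-interp} gives $\|\TD M_k-\TD f_k\|\leq \kappa_{mge}\Delta_k$ eventually almost surely, and combining this with part (a) yields $\|\TD M_k-\TD f_k\|\as 0$.

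The main obstacle is part (a) for the most structured case~\eqref{eq:exactnk-C}, which Theorem~\ref{thm:asfinite} deliberately omits: there the balance bound $|\Ebar_k^0-\Ebar_k^{\text{s}}|=\mcO(\Delta_k^2)$ is unavailable, because the CRN/smooth-path control (Theorem~\ref{thm:var-fd-crn}(iii)) bounds $|\Ebar_k^0-\Ebar_k^{\text{s}}|$ only at order $o(\Delta_k)$ (roughly $\Delta_k/\sqrt{N_k}$), which can dominate $\Delta_k^2$ once $\Delta_k$ is small. Consequently the per-iteration decrease is guaranteed positive only when $\Delta_k$ exceeds a vanishing threshold set by the gradient-estimation error, and small-radius successful steps could in principle increase $f$, so the clean pathwise telescoping of paragraph~two breaks down. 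I expect to close this case not pathwise but in expectation, through an almost-supermartingale (Lyapunov) argument on a potential of the form $f_k^0+\text{const}\cdot\Delta_k^2$, establishing a conditional expected decrease of order $\Delta_k^2$ and then invoking Robbins--Siegmund-type convergence to conclude $\sum_k\Delta_k^2<\infty$ and hence $\Delta_k\as 0$; the already-established facts $\|\TD M_k-\TD f_k\|\as 0$ and $|\Ebar_k^0-\Ebar_k^{\text{s}}|\as 0$ from Corollary~\ref{cor:bounded-error} are exactly what control the error terms entering that potential.
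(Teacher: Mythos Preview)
Your argument for part~(a) in the five cases covered by Theorem~\ref{thm:asfinite}, and for part~(b) in all cases, is essentially the paper's own proof: the paper too bounds the model decrease on successful iterations via the criticality condition and Cauchy decrease, converts this to a true-function decrease using Theorem~\ref{thm:asfinite} with $\kappa_{fde}$ chosen small, telescopes over successes, and then propagates $\sum_{k\in\mcS}\Delta_k^2<\infty$ to $\sum_k\Delta_k^2<\infty$ via the $\gamma_1/\gamma_2$ radius dynamics exactly as you describe. Part~(b) is likewise handled by Lemma~\ref{lem:stochastic-interp} plus part~(a) in the zeroth-order cases and by Corollary~\ref{cor:bounded-error}(b) in the first-order cases.

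Where you diverge is Case~\eqref{eq:exactnk-C}, and there you are making it harder than necessary. You correctly see that the pathwise balance bound is unavailable and that one should pass to expectations, but a Robbins--Siegmund potential $f_k^0+c\Delta_k^2$ is overkill. The paper exploits a feature of~\eqref{eq:exactnk-C} that you did not single out: the sample size $N_k=\lceil\sigma_0^2\lambda_k/\kappa_{as}^2\rceil$ is \emph{deterministic given $\mcF_k$} (it is not a stopping time depending on the running sample variance). Consequently $\mbE[\Ebar_k^0(N_k)-\Ebar_k^{\text{s}}(N_k)\mid\mcF_k]=0$, so taking conditional expectations of your inequality $\Fbar_k^0-\Fbar_k^{\text{s}}\geq\theta\Delta_k^2$ on successful $k$ kills the error term outright and yields $\theta\,\mbE[\Delta_k^2]\leq\mbE[f_k^0-f_{k+1}^0]$. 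Summing over successes gives $\theta\sum_{k\in\mcS}\mbE[\Delta_k^2]\leq f_0-f^*$, and the same radius-dynamics bound gives $\mbE\big[\sum_{k\geq 0}\Delta_k^2\big]<\infty$, hence $\sum_k\Delta_k^2<\infty$ almost surely. Your supermartingale route would presumably also work, but the unbiasedness shortcut is both simpler and the mechanism the paper actually uses.
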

\begin{proof} See Section~\ref{prooflem:deltaconverge} in Appendix.
\end{proof}

The next lemma demonstrates that iterations with TR radii small compared to the estimated gradient norm will eventually become successful, almost surely.

% \hl{There appears to be confusion again in the statement of the following lemma. Do you mean "there exists a constant" or do you mean "for any positive constant"?}\sara{there exists a constant}

\begin{lemma}\label{lem:astroSuccess}
\revise{Suppose that the assumptions outlined in Theorem \ref{thm:Convergence} hold for the relevant  cases.} Given positive constants $\kappa_{fde},\kappa_{ge}>0$, the trajectory generated by Algorithm~\ref{alg:ASTRODF} or \ref{alg:ASTRO} via Case~\eqref{eq:exactnk-A-DF},~\eqref{eq:exactnk-B-DF},~\eqref{eq:exactnk-C-DF},~\eqref{eq:exactnk-A},~\eqref{eq:exactnk-B}, and~\eqref{eq:exactnk-C}, satisfies
\[\mbP\left(\left(\Delta_{k}\leq \kappa_{dum} \|\TD M_k\|\right) \bigcap \left(\rhohat_{k}< \eta \right)\text{ i.o.} \right) = 0,\] for the constant 
\begin{align}
    \kappa_{dum}=\frac{(1-\eta)\kappa_{fcd}}{\kappa_{\sfH}+((2\kappa_{fde}+2\kappa_{ge}+\kappa_{Lg})\vee\kappa_{uLG})},\label{eq:kappa-dum}
\end{align}
where we set $\kappa_{uLG}=0$ for Case \eqref{eq:exactnk-A-DF}--\eqref{eq:exactnk-B}.
\end{lemma}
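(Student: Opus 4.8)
The statement is the stochastic, almost-sure analogue of the classical trust-region fact that an iteration whose radius is small relative to the (model) gradient must be successful. The plan is to identify a probability-one event on which the error bounds supplied by Theorem~\ref{thm:asfinite} and by the model-gradient accuracy hold for all large $k$, argue \emph{deterministically} on that event that $\Delta_k\le\kappa_{dum}\|\TD M_k\|$ forces $\rhohat_k\ge\eta$, and then conclude via Lemma~\ref{lem:borel-cantelli} that the event in the statement can occur only finitely often. The mechanism is a ratio estimate: the numerator of $1-\rhohat_k$ is of order $\Delta_k^2$, while the predicted reduction is of order $\|\TD M_k\|\Delta_k$, so their quotient is $\mcO(\Delta_k/\|\TD M_k\|)\le\mcO(\kappa_{dum})$, and $\kappa_{dum}$ is calibrated to make it at most $1-\eta$.

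First I would record the predicted-reduction lower bound. Assumption~\ref{assum:fcd} (Cauchy decrease), Assumption~\ref{assum:hessian-norm} ($\|\sfH_k\|\le\kappa_{\sfH}$), and the standard Cauchy-step estimate (Defn.~\ref{defn:cauchyred}) give
\[
M_k^0-M_k^{\text{s}}\ge\tfrac{\kappa_{fcd}}{2}\,\|\TD M_k\|\,\min\!\Big\{\tfrac{\|\TD M_k\|}{\kappa_{\sfH}},\,\Delta_k\Big\}.
\]
Since $\kappa_{dum}\kappa_{\sfH}\le(1-\eta)\kappa_{fcd}\le1$, the hypothesis $\Delta_k\le\kappa_{dum}\|\TD M_k\|$ forces $\Delta_k\le\|\TD M_k\|/\kappa_{\sfH}$, so the minimum equals $\Delta_k$ and $M_k^0-M_k^{\text{s}}\ge\tfrac{\kappa_{fcd}}{2}\|\TD M_k\|\Delta_k$.

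Next I would bound the success ratio~\eqref{eq:success-ratio}. Because the model matches the function estimate at the center, $M_k^0=\Fbar_k^0$, and therefore
\[
|1-\rhohat_k|=\frac{|(\Fbar_k^{\text{s}}-\Fbar_k^0)-(M_k^{\text{s}}-M_k^0)|}{M_k^0-M_k^{\text{s}}}=\frac{|\Fbar_k^{\text{s}}-M_k^{\text{s}}|}{M_k^0-M_k^{\text{s}}}.
\]
A first-order Taylor expansion of $f$ about $\BFX_k^0$ with $\kappa_{Lg}$-Lipschitz-gradient remainder (Assumption~\ref{assum:lipschitz}), together with $M_k^0=\Fbar_k^0$, writes the numerator as $\Ebar_k^{\text{s}}-\Ebar_k^0+(\TD f_k-\TD M_k)^\intercal\BFS_k+R_k$, where $R_k$ collects the function's Lipschitz remainder and the model's quadratic term and satisfies $|R_k|\le\tfrac12(\kappa_{Lg}+\kappa_{\sfH})\Delta_k^2$. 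On the good event I bound the first piece by $\kappa_{fde}\Delta_k^2$ (Theorem~\ref{thm:asfinite}) and the second by $\kappa_{ge}\Delta_k^2$, using~\eqref{eq:grad-error} for ASTRO and Lemma~\ref{lem:stochastic-interp} for ASTRO-DF; summing gives $|\Fbar_k^{\text{s}}-M_k^{\text{s}}|\le\tfrac12(2\kappa_{fde}+2\kappa_{ge}+\kappa_{Lg}+\kappa_{\sfH})\Delta_k^2$. Dividing by the predicted-reduction bound and using $\Delta_k\le\kappa_{dum}\|\TD M_k\|$ yields $|1-\rhohat_k|\le(2\kappa_{fde}+2\kappa_{ge}+\kappa_{Lg}+\kappa_{\sfH})\kappa_{dum}/\kappa_{fcd}=1-\eta$ by the definition of $\kappa_{dum}$, hence $\rhohat_k\ge\eta$. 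Since the exceptional sets in Theorem~\ref{thm:asfinite} and in~\eqref{eq:grad-error}/Lemma~\ref{lem:stochastic-interp} occur only finitely often almost surely, the event $\{\Delta_k\le\kappa_{dum}\|\TD M_k\|\}\cap\{\rhohat_k<\eta\}$ is contained in their union for all large $k$, and Lemma~\ref{lem:borel-cantelli} finishes the proof.

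The step I expect to be the main obstacle is the numerator bound in Case~\eqref{eq:exactnk-C}, where the model is not fully linear: its sample size is $\mcO(\lambda_k)$ and does not shrink with $\Delta_k$, so the gradient-error estimate $\|\TD M_k-\TD f_k\|\le\kappa_{ge}\Delta_k$ is unavailable. There I would control $|\Fbar_k^{\text{s}}-M_k^{\text{s}}|$ directly through sample-path smoothness (Assumption~\ref{assum:lipschitzgradpaths}): under CRN, $\Fbar_k^{\text{s}}-\Fbar_k^0-\TD M_k^\intercal\BFS_k$ is an average over $i$ of the per-path remainders $F(\BFX_k^{\text{s}},\xi_i)-F(\BFX_k^0,\xi_i)-\BFG(\BFX_k^0,\xi_i)^\intercal\BFS_k$, each dominated by $\tfrac12\kappa_{LG}(\xi_i)\|\BFS_k\|^2$, and this average concentrates near $\tfrac12\kappa_{uLG}\Delta_k^2$ by a law-of-large-numbers argument. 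This replaces $2\kappa_{fde}+2\kappa_{ge}+\kappa_{Lg}$ by $\kappa_{uLG}$ and accounts for the ``$\vee\,\kappa_{uLG}$'' in $\kappa_{dum}$, with $\kappa_{uLG}=0$ switching off this branch for the remaining cases. Some additional care is needed to merge the several ``eventually almost surely'' guarantees into one large-$k$ good event before invoking Lemma~\ref{lem:borel-cantelli}.
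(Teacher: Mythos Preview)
Your proposal is correct and follows essentially the same route as the paper: bound the predicted reduction from below via Cauchy decrease, bound $|\Fbar_k^{\text{s}}-M_k^{\text{s}}|$ by the four pieces (estimation-error difference, model-gradient error, $\kappa_{Lg}$-remainder, Hessian term) on an ``eventually good'' probability-one event, and divide to get $|1-\rhohat_k|\le 1-\eta$; the separate treatment of Case~\eqref{eq:exactnk-C} via a sample-path Taylor expansion under CRN is also what the paper does. A minor difference is that you explicitly note the need for a law-of-large-numbers argument to replace $\tfrac{1}{N_k}\sum_i\kappa_{LG}(\xi_i)$ by $\kappa_{uLG}$, whereas the paper invokes the bound $\tfrac12\kappa_{uLG}\Delta_k^2$ directly; your observation that $\kappa_{dum}\kappa_{\sfH}\le(1-\eta)\kappa_{fcd}\le1$ (so the Cauchy minimum is attained by $\Delta_k$) is a clean simplification of the paper's extra factor $((\kappa_{dum}\kappa_{\sfH})^{-1}\wedge1)$.
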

\begin{proof} See Section~\ref{proofastroSuccess}.
\end{proof}

%\begin{remark}
    A key observation in the proof of Lemma~\ref{lem:astroSuccess} is that for all cases except Case \ref{eq:exactnk-C}, the prediction error is handled by a Taylor expansion on $f$ and the stochastic error is handled via Theorem \ref{thm:asfinite} and variance reduction in the CRN cases. These mechanisms will eventually hold (but for a large enough iteration that depends on the random algorithm trajectory $\omega$). 
    In Case \ref{eq:exactnk-C}, however, we directly expand $F$ and use its smoothness structure. This means that we are exempt from needing to handle the stochastic error in this case so long as we use CRN. Moreover, this result holds for all $k$ and dependence on the random trajectory $\omega$ is removed. Hence, as pointed out earlier, CRN helps ASTRO in a different way. Another important observation pertains to Case \ref{eq:exactnk-B}, where we use CRN but we do not have the smooth sample paths. CRN helps since handling the stochastic error in the gradient automatically fulfills the requirement on the stochastic error in the function value (See \eqref{eq:taylor-error}.)
%\end{remark}

The next lemma asserts that for iterates where the true gradient is larger than $\varepsilon>0$, there exists a lower bound for $\Delta_k$ in terms of $\varepsilon$.
This guarantees that if the gradient estimate is bounded away from zero, the TR radius cannot be too small. Thus, as long as the sequence of iterates is not close to a first-order critical point, the size of the search space will not drop to zero. 

% \hl{again, this statement needs precision. some of the constants in the lemma are for defined or referred.}\sara{why not?}

\begin{lemma}
    \revise{Suppose that the assumptions outlined in Theorem \ref{thm:Convergence} hold for the relevant  cases.} Given constants $\kappa_{fde},\kappa_{ge}>0$, \revise{there exists a set $\Omega_1$ of measure 0 such that the following holds for every $\omega\notin\Omega_1$.} %for any $\omega \in \Omega$ outside a set of measure zero, 
    As long as $\|\TD f_k(\omega)\|\geq \varepsilon$ for some $\varepsilon>0$ and all $k$, the sampling conditions in Case~\eqref{eq:exactnk-A-DF},~\eqref{eq:exactnk-B-DF},~\eqref{eq:exactnk-C-DF},~\eqref{eq:exactnk-A},~\eqref{eq:exactnk-B}, and~\eqref{eq:exactnk-C} imply that there exists an integer $K(\omega)$ independent of $\varepsilon$ such that $\Delta_k(\omega)\geq \kappa_{l\Delta}\varepsilon$ for all $k\geq K(\omega)$ %$\mbP\left(\mcV\right)=1$ for the random set of algorithm trajectories
    %     \begin{equation} \label{eq:delta-converge}
    %     \mbP\left( \left(\|\TD f_k\|>\varepsilon\ \forall k\right) \bigcap \left( \Delta_k < \kappa_{l\Delta}\varepsilon \text{ i.o.}\right) \right\}=0,
    % \end{equation} 
    % \begin{equation} \label{eq:delta-converge}
    %     \mcV:=\left\{\omega:\ \|\TD f_k(\omega)\|>\varepsilon \ \forall k \text{ and } \exists K(\omega) \text{ s.t. } \Delta_k(\omega)\geq \kappa_{l\Delta}\varepsilon \forall k\geq K(\omega) \right\},
    % \end{equation} 
    with the constant  
    \begin{equation}
        \kappa_{l\Delta} = \frac{\gamma_2}{\max\left\{\mu^{-1},\kappa_{dum}^{-1}\right\}+(\kappa_{mge}\vee 0)}, \label{eq:kappa-delta}
    \end{equation}
    where $\kappa_{mge}=-\infty$ for \eqref{eq:exactnk-C} and $\kappa_{dum}$ is as defined in \eqref{eq:kappa-dum}. In other words,
    \begin{equation*}
        \mbP\left(\|\TD f_k\|\geq \varepsilon\ \forall k\ \Rightarrow\ \Delta_k\leq \kappa_{l\Delta}\varepsilon\ \text{ i.o.}\right)=0.
    \end{equation*}
    \label{lem:bounded-delta}
\end{lemma}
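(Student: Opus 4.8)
The plan is to combine the two preceding almost-sure facts---stochastic full linearity (Lemma~\ref{lem:stochastic-interp}) and eventual success of iterations whose radius is small relative to the model gradient (Lemma~\ref{lem:astroSuccess})---with the deterministic trust-region management dynamics, in which $\Delta_k$ is only ever shrunk by the fixed factor $\gamma_2$. First I would take the exceptional null set $\Omega_1$ to be the union of the measure-zero sets on which the conclusions of Lemma~\ref{lem:stochastic-interp} and Lemma~\ref{lem:astroSuccess} fail. For each $\omega\notin\Omega_1$ both ``bad'' events occur only finitely often, so there is an index $K(\omega)$ such that for every $k\geq K(\omega)$ one has simultaneously $\|\TD M_k-\TD f_k\|\leq\kappa_{mge}\Delta_k$ and the implication $\Delta_k\leq\kappa_{dum}\|\TD M_k\|\Rightarrow\rhohat_k\geq\eta$. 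The reason $K(\omega)$ can be taken independent of $\varepsilon$ is exactly that these two properties are phrased without reference to $\varepsilon$; the tolerance $\varepsilon$ enters only through the hypothesis $\|\TD f_k\|\geq\varepsilon$ and the size of the eventual floor.

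The key step is an ``expansion trigger'': I would show that for $k\geq K(\omega)$, whenever $\|\TD f_k\|\geq\varepsilon$ and $\Delta_k\leq\delta^\star:=\varepsilon/\bigl(\max\{\mu^{-1},\kappa_{dum}^{-1}\}+(\kappa_{mge}\vee 0)\bigr)$, iteration $k$ is successful and the radius expands. Starting from full linearity at the center gives $\|\TD M_k\|\geq\|\TD f_k\|-\kappa_{mge}\Delta_k\geq\varepsilon-\kappa_{mge}\Delta_k$. Expansion in the (TM) step requires both the criticality test $\Delta_k\leq\mu\|\TD M_k\|$ and, through Lemma~\ref{lem:astroSuccess}, the success condition $\Delta_k\leq\kappa_{dum}\|\TD M_k\|$; both follow from $\Delta_k\leq\min\{\mu,\kappa_{dum}\}\|\TD M_k\|$. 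Substituting the lower bound on $\|\TD M_k\|$ shows $\Delta_k\leq\delta^\star$ is sufficient for this, since solving the self-referential inequality $\Delta_k\leq\min\{\mu,\kappa_{dum}\}(\varepsilon-\kappa_{mge}\Delta_k)$ returns exactly the threshold $\delta^\star$. Thus the stated $\kappa_{l\Delta}=\gamma_2/\bigl(\max\{\mu^{-1},\kappa_{dum}^{-1}\}+(\kappa_{mge}\vee 0)\bigr)$ emerges as $\gamma_2\delta^\star/\varepsilon$.

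With the trigger in hand I would run a geometric floor-maintenance argument. Every iteration either expands (multiplying $\Delta_k$ by $\gamma_1>1$, capped at $\Delta_{\max}$, hence never decreasing it) or shrinks (multiplying by $\gamma_2$), and for $k\geq K(\omega)$ a shrink can occur only when $\Delta_k>\delta^\star$. Consequently, once $\Delta_k>\gamma_2\delta^\star$ the radius can never subsequently drop to or below $\gamma_2\delta^\star=\kappa_{l\Delta}\varepsilon$: a shrink out of the zone $\Delta_k>\delta^\star$ leaves $\gamma_2\Delta_k>\gamma_2\delta^\star$, while from the zone $\gamma_2\delta^\star<\Delta_k\leq\delta^\star$ the trigger forces an expansion to a value $\geq\Delta_k$. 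Hence, under the hypothesis $\|\TD f_k\|\geq\varepsilon$ for all $k$, the event $\{\Delta_k\leq\kappa_{l\Delta}\varepsilon\}$ occurs at most finitely often (only during an initial finite climb if $\Delta_{K(\omega)}$ happens to start below the floor), which is precisely the asserted i.o.-probability-zero conclusion and yields the pointwise bound $\Delta_k\geq\kappa_{l\Delta}\varepsilon$ for all $k$ past that climb.

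Finally, Case~\eqref{eq:exactnk-C} requires separate bookkeeping: there $\kappa_{mge}=-\infty$ so the model-error term disappears ($\kappa_{mge}\vee 0=0$), and, as remarked after Lemma~\ref{lem:astroSuccess}, the success implication then holds for \emph{all} $k$ deterministically via sample-path smoothness, so no $\varepsilon$-free waiting index is needed for that ingredient. The main obstacle I anticipate is the joint handling of the self-referential inequality and the $\varepsilon$-independence of $K(\omega)$: one must verify that $\varepsilon$ enters solely through the hypothesis and through $\delta^\star(\varepsilon)$, and that it is the \emph{$\Delta_k$-proportional} form of the full-linearity bound (rather than a fixed tolerance) that makes the threshold $\delta^\star$ genuinely proportional to $\varepsilon$ without forcing $K(\omega)$ to grow as $\varepsilon\downarrow 0$.
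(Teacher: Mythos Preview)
Your approach for Cases~\eqref{eq:exactnk-A-DF}--\eqref{eq:exactnk-B} is correct and essentially the same as the paper's: you combine Lemmas~\ref{lem:stochastic-interp} and~\ref{lem:astroSuccess} at the same threshold $\delta^\star$, differing only in that you argue directly via a floor-maintenance induction while the paper frames the same logic as a contradiction (pick a crossing index $t\notin\{k_j\}$ with $t+1\in\{k_j\}$ where the radius shrank into the forbidden zone, and show $t$ must in fact have been successful). Your direct version is arguably cleaner since it handles the ``initial climb'' explicitly rather than implicitly.

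Your treatment of Case~\eqref{eq:exactnk-C}, however, has a genuine gap. Writing $\kappa_{mge}\vee 0=0$ in $\delta^\star$ is fine, but your expansion trigger still needs $\|\TD M_k\|\geq\varepsilon$ whenever $\Delta_k\leq\delta^\star$, and you have no mechanism to deliver that pathwise: Lemma~\ref{lem:stochastic-interp} does \emph{not} cover~\eqref{eq:exactnk-C}, and Lemma~\ref{lem:deltaconverge}(b) gives only $\|\TD M_k-\TD f_k\|\to 0$, which would force the waiting index to depend on $\varepsilon$, contrary to the lemma's claim. The paper sidesteps this entirely by a different device: in~\eqref{eq:exactnk-C} the sample size is deterministic, so $\TD M_t$ is an unbiased estimator of $\TD f_t$; the paper then applies Jensen's inequality $\|\TD f_t\|=\|\mbE[\TD M_t]\|\leq\mbE[\|\TD M_t\|]$ to the contrapositive bound $\|\TD M_t\|<(\mu^{-1}\vee\kappa_{dum}^{-1})\Delta_t<(\mu^{-1}\vee\kappa_{dum}^{-1})\tfrac{\kappa_{l\Delta}}{\gamma_2}\varepsilon$ to contradict $\|\TD f_t\|\geq\varepsilon$. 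You should either adopt that expectation argument or supply another pathwise lower bound on $\|\TD M_k\|$ for this case.
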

\begin{proof} See Section~\ref{proofbounded-delta} in Appendix.
\end{proof}

% Lemma~\ref{lem:bounded-delta} implies that if $\|\TD f_k\|>\varepsilon$, then eventually we should have $\Delta_k\geq \kappa_{l\Delta}\varepsilon$ with probability 1. 
% With this observation and  Lemma~\ref{lem:deltaconverge}, 
With these key lemmas, the proof of strong consistency, Theorem~\ref{thm:Convergence}, follows.

\begin{proof}[Proof of Theorem~\ref{thm:Convergence}]
By Lemma~\ref{lem:bounded-delta}, we know that for \revise{every $\omega\notin\Omega_1$ where $\Omega_1$ is a set of measure 0}, if there is an $\varepsilon>0$ such that $\|\TD f_k(\omega)\|>\varepsilon$ for all $k$, then we must eventually have $\Delta_k(\omega)\geq \kappa_{l\Delta}\varepsilon$. However, we know that $\Delta_k$ converges to zero with probability one from Lemma~\ref{lem:deltaconverge} part (a). Therefore, there must be at least a subsequence $\{k_j(\omega)\}$ where $\|\TD f_{k_j}(\omega)\|<\varepsilon$. Since $\varepsilon>0$ is arbitrary,  $\liminf_{k\to\infty} \|\TD f_k(\omega)\|=0$. This result holds for all $\omega$ except a set of measure 0; hence 
\begin{equation}
    \mbP\left(\liminf_{k\to\infty} \|\TD f_k\|=0\right)=1.\label{eq:liminf}    
\end{equation}
It remains to show from the liminf result that the lim result also holds. In doing so, we suppress $\omega$ for ease of exposition. 

Towards proof by contradiction, suppose that there exists another subsequence $\{t_j\}$ where $\|\TD f_{t_j}\|>\varepsilon$ for all $j$. The chosen random trajectory might have finite or infinite number of successful iterations. First, if there are only finitely many successful iterations, we must have that, for sufficiently large $j$, either $\|\TD M_{t_j}\|<\frac{1}{\mu}\Delta_{t_j}$ (Step~\ref{ASTRO:ratio} of Algorithm~\ref{alg:ASTRODF} and \ref{alg:ASTRO}) or $\|\TD M_{t_j}\|<\frac{1}{\kappa_{dum}}\Delta_{t_j}$ (by Lemma \ref{lem:astroSuccess}); hence $\|\TD M_{t_j}\|<\left(\frac{1}{\mu}\vee\frac{1}{\kappa_{dum}}\right)\Delta_{t_j}$ for large $j$. Choose $j$ also sufficiently large such that $\Delta_{t_j}<\frac{1}{4}\varepsilon(\mu\wedge\kappa_{dum})$ (by Lemma~\ref{lem:deltaconverge}(a)) and $\|\TD M_{t_j}-\TD f_{t_j}\|<\frac{1}{4}\varepsilon$ (by Lemma~\ref{lem:deltaconverge}(b)). Then we get the following contradiction
$\varepsilon<\|\TD f_{t_j}\|\leq\|\TD f_{t_j}-\TD M_{t_j}\|+\|\TD M_{t_j}\|<\frac{\varepsilon}{2}$.

Next, if there are infinitely many successful iterations, from \eqref{eq:liminf}, we can find  $\ell_j$ as the first iteration after each $t_j$ where $\|\TD f_{\ell_j}\|\leq \frac{\varepsilon}{12}$. We restrict our attention to $\mcK_j=\{k:\ t_j\leq k<\ell_j\}$ for all $j$. Choose $j$ large enough such that $\|\TD f_{\ell_j}-\TD M_{\ell_j}\|\leq \frac{\varepsilon}{12}$ which implies by triangle inequality that we must have $\|\TD M_{\ell_j}\|\leq \frac{\varepsilon}{6}$. Hence, if we can show that $\|\TD f_{t_j}-\TD f_{\ell_j}\|\leq\frac{\varepsilon}{4}$ must hold for large $j$, we observe the following contradiction
$\varepsilon<\|\TD f_{t_j}\|\leq \|\TD f_{t_j}-\TD f_{\ell_j}\|+\|\TD f_{\ell_j}-\TD M_{\ell_j}\|+\|\TD M_{\ell_j}\|\leq\frac{\varepsilon}{2}$, that completes the proof. By Assumption~\ref{assum:lipschitz}, $\|\TD f_{t_j}-\TD f_{\ell_j}\|\leq\kappa_{Lg}\|\BFX_{t_j}^0-\BFX_{\ell_j}^0\|$. We can hence have $\|\TD f_{t_j}-\TD f_{\ell_j}\|\leq\frac{\varepsilon}{4}$ if $\|\BFX_{t_j}^0-\BFX_{\ell_j}^0\|\to 0$ as $j\to\infty$, whereby sufficiently large $j$ will satisfy $\|\BFX_{t_j}^0-\BFX_{\ell_j}^0\|\leq \frac{\varepsilon}{4\kappa_{Lg}}$.

Therefore, it only remains to show that we must have $\|\BFX_{t_j}^0-\BFX_{\ell_j}^0\|\to 0$ as $j\to\infty$. Note that by definition of $t_j$ and $\ell_j$, there must be a number of unsuccessful iterations between them (in $\mcK_j$). However, since the iterates only change after every successful iteration, we focus on the subsequence of iterates in $\mcK_j$ that are successful. Let $\mcS$ be the set of all successful iterations and let $\ell_j'$ be the last successful iteration before $\ell_j$. Furthermore, suppose $j$ is large enough such that if $k\in\mcK_j$, then 
\begin{itemize}
    \item[(A)]  $\Delta_{k}\leq \frac{\varepsilon}{\kappa_{\sfH}}$ (by Lemma~\ref{lem:deltaconverge}(a)), 
    
    \item[(B)] $\|\TD f_k-\TD M_k\|<\frac{\varepsilon}{24}$ (by Lemma~\ref{lem:deltaconverge}(b)), 
    \item[(C)] if $\Delta_{k}\leq \kappa_{dum}\|\TD M_k\|$ then $\rhohat_k\geq \eta$ (by Lemma~\ref{lem:astroSuccess}), 
    \item[(D)] $|\Fbar_k^0(N_k^0)-f_k^0|\leq \frac{\eta\kappa_{fcd}(\mu\wedge\kappa_{dum})\varepsilon}{24\times 24\times 4(\frac{1}{1-\gamma_2}+\gamma_1)}$ (by Corollary~\ref{cor:bounded-error}(a)).
\end{itemize}
From (B), we get $\|\TD M_k\|\geq \|\TD f_k\| - \|\TD f_k-\TD M_k\|>\frac{\varepsilon}{24}$ for all $k\in\mcK_j$. Two consecutive iterations in the set $\mcK_j\cap \mcS$, with $|\mcK_j\cap \mcS|=U_j$, that we index with a subsequence $\{r^j_i\}_{i=1,2,\cdots,U_j}$, satisfy 
\begin{align}
    \Fbar_{r^j_i}^0(N_{r^j_i}^0)-\Fbar_{r^j_{i+1}}^0(N_{r^j_{i+1}}^0) & = \left(\Fbar_{r^j_i}^0(N_{r^j_i}^0) - \Fbar_{r^j_i}^{\text{s}}(N_{r^j_i}^{\text{s}})\right) + \left(\Fbar_{r^j_i}^{\text{s}}(N_{r^j_i}^{\text{s}}) - \Fbar_{r^j_{i+1}}^0(N_{r^j_{i+1}}^0) \label{eq:two-terms}\right)\\
    & \geq \frac{\eta \kappa_{fcd}\varepsilon}{2\times 24} \Delta_{r^j_i} -  \frac{\eta \kappa_{fcd}\varepsilon}{4\times 24} \Delta_{r^j_i} = \frac{\eta \kappa_{fcd}\varepsilon}{4\times 24} \Delta_{r^j_i}.\nonumber
\end{align} Note, the first term on the right hand side of \eqref{eq:two-terms} is bounded by the Cauchy reduction (Assumption~\ref{assum:fcd}), small $\Delta_{r^j_i}$ from (A), and observing that $\|\TD f_k-\TD M_k\|\geq \|\TD f_k\| - \|\TD f_k-\TD M_k\|>\frac{\varepsilon}{24}$. Sufficient reduction in the function value estimates after the first success is followed by the subtraction of estimates at the same point after multiple unsuccessful iterations. For each unsuccessful iteration after iteration $r^j_i$, denoted by $r^j_i+a,\ a=1,2,\cdots$, we know that $\Delta_{r^j_i+a}\geq (\mu\wedge\kappa_{dum})\frac{\varepsilon}{24}$. Therefore, (D) will imply that for these unsuccessful iterations, $\left|\Fbar_{r^j_i+a}^0(N_{r^j_i+a}^0)-f_{r^j_i+a}^0\right|\leq \frac{\eta\kappa_{fcd}\varepsilon\gamma_2^a}{4\times 24 (\frac{1}{1-\gamma_2}+\gamma_1)}\Delta_{r^j_i}$. Although we do not know how many unsuccessful iterations there may be between each two consecutive success, we can use the infinite sum of the geometric series $\sum_{a=0}^\infty\gamma_2^a=\frac{1}{1-\gamma_2}$ and use $\Delta_{r^j_i+1}\leq\Delta_{r^j_i}\gamma_1$ to see the bound of the second term on the right hand side of \eqref{eq:two-terms} by a telescoping sum. In summary, \eqref{eq:two-terms} states that although the function value estimate at the incumbent solution can change (and become more accurate due to increased sample size), a reduction between two incumbent solutions will be inevitable when one moves further along any random trajectory generated by the algorithm. Now, a telescopic sum of $U_j$ many \eqref{eq:two-terms} terms yields \begin{align*}\|\BFX_{t_j}^0-\BFX_{\ell_j}^0\|=\sum_{i=1}^{U_j}\|\BFX_{r^j_i}^0-\BFX_{r^j_{i+1}}^0\|\leq \sum_{i=1}^{U_j} \Delta_{r^j_i}\leq \frac{4}{\eta\kappa_{fcd}\varepsilon} \left(\Fbar_{t_j}^0(N_{t_j}^0)-\Fbar_{\ell_j'}^0(N_{\ell_j'}^0)\right). \end{align*} Since we have shown that for $j$ chosen large enough as specified above, the sequence $\{\Fbar_{r^j_i}^0(N_{r^j_i}^0)\}$ is decreasing, and by Corollary~\ref{cor:bounded-error} (see remark 2-ii), this sequence is also bounded below, we conclude that $\|\BFX_{t_j}^0-\BFX_{\ell_j}^0\|\to 0$ as $j\to \infty$.
\end{proof}

\section{Complexity} \label{sec:complexity} 
In this section, we present two main theorems corresponding to Case \eqref{eq:exactnk-A-DF}--\eqref{eq:exactnk-C}. The first of these is on \emph{iteration complexity}, i.e., an almost sure bound on the number of iterations taken to solve~\eqref{eq:problem} to $\varepsilon$-optimality. The second main theorem of this section is on \emph{sample complexity}, i.e., an almost sure bound on the total number of oracle calls needed to solve~\eqref{eq:problem} to $\varepsilon$-optimality. 

\subsection{Iteration Complexity}
Let $T_\varepsilon$ denote the iteration at which the algorithm first reaches $\varepsilon$-optimality. The following result asserts that, almost surely, $T_{\varepsilon}\varepsilon^2 \leq M$ for small enough $\varepsilon$, where $M$ is a random variable with finite mean. (For the smooth first-order CRN case, a corresponding result also holds in expectation.)

\begin{theorem}[Iteration Complexity] \label{thm:asic}
    \revise{Suppose that the assumptions outlined in Theorem \ref{thm:Convergence} hold for the relevant  cases.} For \revise{every $\omega\notin\Omega_1$ where $\Omega_1$ is a set of measure 0}, the sampling conditions in~\eqref{eq:exactnk-A-DF},~\eqref{eq:exactnk-B-DF},~\eqref{eq:exactnk-C-DF},~\eqref{eq:exactnk-A},~\eqref{eq:exactnk-B}, and~\eqref{eq:exactnk-C}  imply that there exists a positive random variable $M$ with $\mbE[M] < \infty$ such that for any deterministic positive sequence $\varepsilon_k \to 0$ as $k \to \infty$,%exists $M$ with $\mbE[M] < \infty$ such that for any positive sequence $\varepsilon_k \to 0$,  
    $$\mbP\left(T_{\varepsilon_k}\varepsilon_k^2 > M \mbox{ i.o.} \right) = 0.$$ %there exist a constant $\varepsilon_0(\omega)>0$ and a positive random variable $M(\omega)$ with finite mean such that if $\varepsilon\leq \varepsilon_0(\omega)$, then $T_\varepsilon(\omega)\varepsilon^2\leq M(\omega)$. In other words, %\[\mbP\left((T_\varepsilon\varepsilon>M\ \forall \text{ r.v. }M>0)\ \text{i.o.}\right)=0.\]
    %\[\mbP\left(\exists\ \varepsilon_0>0\text{ and an integrable r.v. }M>0\ \text{s.t. }T_\varepsilon\varepsilon^2\leq M\ \forall \varepsilon\leq\varepsilon_0\right)=1.\] 
    Moreover,~\eqref{eq:exactnk-C} implies $\exists\ \varepsilon_0>0, 0<\kappa_m<\infty$ such that $\mbE[T_\varepsilon \varepsilon^{2}]\leq \kappa_m$ for all $\varepsilon\leq\varepsilon_0$.
\end{theorem}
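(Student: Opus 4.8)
The plan is to establish the iteration complexity bound $T_\varepsilon \varepsilon^2 \leq M$ by leveraging the three key lemmas already proved: the sufficient-decrease mechanism (Assumption~\ref{assum:fcd} with Lemma~\ref{lem:astroSuccess}), the lower bound on the trust-region radius before reaching $\varepsilon$-optimality (Lemma~\ref{lem:bounded-delta}), and the almost-sure boundedness of estimation errors (Corollary~\ref{cor:bounded-error}). The central idea is a standard trust-region accounting argument: $f$ is bounded below, so the total decrease in the (estimated) objective along the iterate sequence is almost surely bounded; meanwhile each successful iteration before $T_\varepsilon$ contributes a decrease proportional to $\varepsilon \Delta_k$, and by Lemma~\ref{lem:bounded-delta} we have $\Delta_k \geq \kappa_{l\Delta}\varepsilon$ on the relevant iterations, so each successful step contributes $\Omega(\varepsilon^2)$. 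Hence the number of successful iterations before $T_\varepsilon$ is $\mcO(\varepsilon^{-2})$. Separately one bounds the number of unsuccessful iterations between successes using the geometric shrinkage/expansion of $\Delta_k$ and the lower bound $\Delta_k \geq \kappa_{l\Delta}\varepsilon$, so the total iteration count is also $\mcO(\varepsilon^{-2})$.

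\textbf{First I would} fix $\omega \notin \Omega_1$ and work along the trajectory up to iteration $T_{\varepsilon_k}$. Partition the iterations $k < T_{\varepsilon_k}$ into successful ($\rhohat_k > \eta$ with the criticality test passed) and unsuccessful ones. For a successful iteration $k$, combine the Cauchy-decrease condition (Assumption~\ref{assum:fcd}), which gives model reduction $M_k^0 - M_k^{\text{s}} \geq \tfrac{1}{2}\kappa_{fcd}\|\TD M_k\|\min\{\|\TD M_k\|/\kappa_{\sfH},\Delta_k\}$, with the success test $\rhohat_k > \eta$ to translate model reduction into a genuine reduction in the \emph{estimated} function value $\Fbar_k^0(N_k^0) - \Fbar_k^{\text{s}}(N_k^{\text{s}}) \geq \eta(M_k^0 - M_k^{\text{s}})$. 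Using $\|\TD f_k\| \geq \varepsilon_k$, Lemma~\ref{lem:deltaconverge}(b) to pass from $\|\TD f_k\|$ to $\|\TD M_k\|$, and the radius lower bound $\Delta_k \geq \kappa_{l\Delta}\varepsilon_k$ from Lemma~\ref{lem:bounded-delta}, each successful step yields a decrease of order $\varepsilon_k^2$ in the running estimate $\Fbar_k^0(N_k^0)$. Telescoping across successful iterations, exactly as in the final part of the proof of Theorem~\ref{thm:Convergence}, and invoking the almost-sure lower boundedness of $\{\Fbar_k^0(N_k^0)\}$ (Corollary~\ref{cor:bounded-error}(b), remark 2-ii) caps the number of successful iterations by a random variable times $\varepsilon_k^{-2}$.

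\textbf{Next} I would bound the unsuccessful iterations. Because $\Delta_k$ cannot fall below $\kappa_{l\Delta}\varepsilon_k$ before $T_{\varepsilon_k}$ (Lemma~\ref{lem:bounded-delta}) while each unsuccessful iteration multiplies $\Delta_k$ by $\gamma_2 < 1$ and each successful one by at most $\gamma_1$, a standard counting argument relates the number of unsuccessful steps to the number of successful steps plus a $\log$-type term controlled by $\log(\Delta_0/(\kappa_{l\Delta}\varepsilon_k))$; the dominant contribution remains $\mcO(\varepsilon_k^{-2})$. Assembling the two counts gives $T_{\varepsilon_k} \leq M\,\varepsilon_k^{-2}$ for all large $k$ for a random variable $M$; to get $\mbE[M] < \infty$ I would track that $M$ is essentially proportional to the (finite-mean) total objective decrease $f(\BFx_0) - \inf f$ plus the almost-surely-bounded error term, whose expectation is controlled by Assumption~\ref{assum:martingale}. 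Applying Lemma~\ref{lem:borel-cantelli} to the event $\{T_{\varepsilon_k}\varepsilon_k^2 > M\}$ yields the i.o.\ statement.

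\textbf{For the in-expectation refinement} under Case~\eqref{eq:exactnk-C}, the key simplification is that CRN with smooth sample paths removes the random iteration index beyond which the good events hold: as noted after Lemma~\ref{lem:astroSuccess}, in this case the success mechanism and the radius lower bound hold for \emph{all} $k$ with no dependence on the trajectory $\omega$, since the stochastic function-difference error is exactly controlled sample-path-wise rather than merely eventually. This deterministic control lets me bound $T_\varepsilon \varepsilon^2$ by a constant plus a term whose expectation is finite (using that the bounded-error constants have finite moments by Assumption~\ref{assum:martingale}), giving $\mbE[T_\varepsilon \varepsilon^2] \leq \kappa_m$ uniformly for $\varepsilon \leq \varepsilon_0$. \textbf{The hard part} is the bookkeeping in the telescoping step: the incumbent estimate $\Fbar_k^0(N_k^0)$ is re-evaluated with a fresh (larger) sample each iteration, so comparing estimates at the same point across unsuccessful iterations requires the careful geometric-series accounting seen in the proof of Theorem~\ref{thm:Convergence}, ensuring the accumulated estimation-error discrepancies do not swamp the $\Omega(\varepsilon^2)$ per-step decrease; making $\mbE[M]<\infty$ rigorous, rather than just $M < \infty$ a.s., is where the moment assumptions must be used most delicately.
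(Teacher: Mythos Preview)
Your overall plan is sound and would work, but the paper takes a more economical route that you should be aware of. Rather than separately counting successful and unsuccessful iterations, the paper reuses the inequality already established in the proof of Lemma~\ref{lem:deltaconverge}(a), namely
\[
\sum_{k=0}^\infty \Delta_k^2 \;\leq\; \frac{\gamma_1^2}{1-\gamma_2^2}\left(\frac{\Delta_0^2}{\gamma_2^2}+\frac{f_0-f^*+\sum_{i\in\mcS}\Ebar_i^0(N_i^0)-\Ebar_i^\text{s}(N_i^\text{s})}{\theta}\right),
\]
and combines it with Lemma~\ref{lem:bounded-delta} to get $(T_\varepsilon - K)\kappa_{l\Delta}^2\varepsilon^2 \leq \sum_{k=K}^{T_\varepsilon}\Delta_k^2 \leq \sum_{k=0}^\infty \Delta_k^2$. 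This single line handles successful and unsuccessful iterations simultaneously, because the geometric relationship between consecutive unsuccessful $\Delta_k$'s is already baked into~\eqref{eq:alldelta}. Your separate telescoping-plus-counting argument effectively rederives this, so it is correct but redundant; the delicate bookkeeping you flag as ``the hard part'' is precisely what the $\sum_k\Delta_k^2$ bound lets you avoid.

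Two smaller points. First, your final appeal to Borel--Cantelli is misplaced: once you have shown $T_\varepsilon\varepsilon^2 \leq M$ for all $\varepsilon \leq \varepsilon_0(\omega)$ on a probability-one set, the i.o.\ statement follows immediately since only finitely many $\varepsilon_k$ exceed $\varepsilon_0(\omega)$; there is no summability of probabilities to check. Second, for the in-expectation result under~\eqref{eq:exactnk-C}, the mechanism the paper uses is not that ``good events hold for all $k$ pathwise'' but rather that the sample size $N_k$ is \emph{deterministic} given $\mcF_k$, so the Monte Carlo estimator is unbiased and $\mbE[\Ebar_k^0(N_k^0)-\Ebar_k^{\text{s}}(N_k^{\text{s}})]=0$. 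Taking expectations of the displayed $\sum\Delta_k^2$ bound then makes the error sum vanish outright, yielding a deterministic bound on $\mbE[\sum_k\Delta_k^2]$; this is what delivers $\mbE[T_\varepsilon\varepsilon^2]\leq\kappa_m$ cleanly.
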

    
Theorem \ref{thm:asic} implies that ASTRO(-DF) has iteration complexity that is $\mcO(\varepsilon^{-2})$ almost surely. For~\eqref{eq:exactnk-C} alone, the assertion can be modified to an analogous statement in expectation, similar to that appearing in \cite{blanchet2019convergence}. %Moreover, if we impose specific regularity conditions on the random variable $\kappa_{T}$, such as having finite mean, we can achieve the $L_1$ results, which aligns with results by~\cite{blanchet2019convergence}. However, we achieve this canonical rate without relying on assumptions such as probabilistically fully linear models, or their independence.%, or the use of renewal theory, allowing us to utilize CRN.
 
\subsection{Proof of Theorem~\ref{thm:asic}}
\begin{proof}{[of Theorem~\ref{thm:asic}.]} 
in what follows, we omit $\omega$ for ease of exposition. Let $\mcS$ denote the set of successful iterations. Since $\rhohat_i\geq\eta$ and $\|\TD M_i\|\geq \frac{1}{\mu}\Delta_i$ for every $i\in\mcS$, we have $\Fbar_i^0(N_i^0)-\Fbar_i^\text{s}(N_i^\text{s})\geq \frac{\eta\kappa_{fcd}}{2}\frac{\Delta_i^2}{\mu}((\mu\kappa_{\sfH})^{-1}\wedge 1)$. From the proof of Lemma~\ref{lem:deltaconverge}(a), we observe that
\begin{align}
    \sum_{k=0}^\infty \Delta_k^2 & %\leq\frac{\gamma_1^2}{1-\gamma_2^2}\left(\frac{\Delta_0^2}{\gamma_2^2}+\sum_{i\in\mcS}\Delta_i^2\right) 
    \leq \frac{\gamma_1^2}{1-\gamma_2^2}\left(\frac{\Delta_0^2}{\gamma_2^2}+\frac{f_0-f^*+\sum_{i\in\mcS}\Ebar_i^0(N_i^0)-\Ebar_i^\text{s}(N_i^\text{s})}{\theta}\right),\label{eq:ssq-delta}
\end{align}
where $\theta=\frac{\eta\kappa_{fcd}}{2\mu(\mu\kappa_\sfH\vee 1)}$, $f_0=f(\BFx_0)$, and $f^*=\min_{\BFx\in\real^d}\ f(\BFx)$.

We first prove the result for Case~\eqref{eq:exactnk-A-DF}--\eqref{eq:exactnk-B}. Fix $0<\kappa_{fde}<\frac{1-\gamma_2^2}{\gamma_1^2}\theta$. By Theorem~\ref{thm:asfinite}, find $K_{fde}$ such that $|\Ebar_k^\text{s}(N_k^\text{s})-\Ebar_k^0(N_k^0)|\leq \kappa_{fde}\Delta_k^2$ for all $k\geq K_{fde}$ and let $I$ be the index of the last successful iteration before $K_{fde}$. Then by defining the random variable $Q:=\sum_{i\in\mcS,i\leq I}\Ebar_i^0(N_i^0)-\Ebar_i^\text{s}(N_i^\text{s})$, \eqref{eq:ssq-delta} can be re-written as 
\begin{align*}
  \sum_{k=0}^\infty \Delta_k^2 &   \leq \frac{\gamma_1^2}{1-\gamma_2^2}\left(\frac{\Delta_0^2}{\gamma_2^2}+\frac{1}{\theta}\left(f_0-f^*+Q+\kappa_{fde}\sum_{k=0}^\infty\Delta_k^2\right)\right) \\
  \Rightarrow \sum_{k=0}^\infty \Delta_k^2 & \leq \frac{\gamma_1^2}{\theta(1-\gamma_2^2)-\gamma_1^2\kappa_{fde}}\left(\frac{\Delta_0^2}{\gamma_2^2}+\frac{Q+f_0-f^*}{\theta}\right).
\end{align*}
Next, set an integer $K=K_\rho\vee K_g$, where $\Delta_k\leq \kappa_{dum}\|\TD M_k\|$ implies $\rhohat\geq \eta$ for all $k\geq K_\rho$ and $\|\TD M_k-\TD f_k\|\leq \kappa_{mge}\Delta_k$ for $\kappa_{mge}$ defined as in \eqref{eq:kappa-mge} (with any $\kappa_{eg}>0$) for all $k\geq K_g$. Choose an $\varepsilon_0>0$ such that $K<T_{\varepsilon_0}$. Then, by Lemma~\ref{lem:bounded-delta}, for every $\varepsilon\leq\varepsilon_0$, we get $\Delta_k\geq \kappa_{\ell\Delta}\varepsilon$ as long as $K\leq k < T_\varepsilon$ (note, $T_\varepsilon\geq T_{\varepsilon_0}$). 
Therefore, every $\varepsilon\leq\varepsilon_0$ satisfies  $(T_\varepsilon - K)\kappa_{\ell\Delta}^2\varepsilon^2\leq \sum_{k=K}^{T_\varepsilon}\Delta_k^2 \leq \sum_{k=0}^\infty \Delta_k^2$ which proves the assertion of the theorem since
\begin{align*}
    T_\varepsilon \varepsilon^2 \leq M:= \frac{\gamma_1^2}{\kappa_{\ell\Delta}^2(\theta(1-\gamma_2^2)-\gamma_1^2\kappa_{fde})}\left(\frac{\Delta_0^2}{\gamma_2^2}+\frac{Q+f_0-f^*}{\theta}\right) + K\varepsilon^2 <\infty.
\end{align*} One can follow the same arguments for Case~\eqref{eq:exactnk-C}, with a few minor differences. The first difference is to define, in lieu of $K_{fde}$ and invoking Theorem~\ref{thm:asfinite}, a large integer $K_c=K_M\vee K_F\vee K_\rho$ that ensures $\|\TD M_k\|\geq\frac{\varepsilon}{2}$, $\Delta_k\leq \kappa_{dum}\|\TD M_k\|$ will satisfy $\rhohat_k\geq\eta$, and as a result, $\varepsilon<\frac{2\Delta_k}{(\mu\wedge\kappa_{dum})}$, and $|\Ebar_k^0(N_k^0)|\leq c\varepsilon^2$ for all $k\geq K_c$.
As a result, for unsuccessful iterations larger then $K_c$, we get $\varepsilon<\frac{2\Delta_k}{(\mu\wedge\kappa_{dum})}$ and since even for successful iterations one can write $\Delta_k\gamma_1^{\iota}\leq \Delta_{\max}$, where $\iota \leq \log_{\gamma_1}\left(\frac{\Delta_{\max}}{\Delta_k}\right)$, we conclude that $|\Ebar_k^0(N_k^0)-\Ebar_k^{\text{s}}(N_k^{\text{s}})|\leq \frac{8c\gamma_1^{2\iota}}{(\mu^2\wedge\kappa_{dum}^2)}\Delta_k^2$ for all $k\geq K_c$. 
The second difference is to set $K=K_{\rho}$ to reach $\Delta_k\geq \kappa_{\ell\Delta}\varepsilon$ when $k\geq K$. The rest of the proof then follows as before.

Next, we prove the expected iteration complexity for Case~\eqref{eq:exactnk-C}. Take an expectation from both sides of \eqref{eq:ssq-delta} to get 
$\mbE\left[\sum_{k=0}^\infty\Delta_k^2\right] \leq \frac{\gamma_1^2}{1-\gamma_2^2}\left(\frac{\Delta_0^2}{\gamma_2^2}+\frac{f_0-f^*}{\theta}\right)$. This is because the estimator in this case with a deterministic number of iid samples is unbiased. From here, choose an $\varepsilon_0>0$ such that $K<T_{\varepsilon_0}$ with $K=K_\rho$ defined the same as above and since it exists with probability 1, $\mbE[K]<\infty$. Therefore, every $\varepsilon\leq\varepsilon_0$ satisfies  $(T_\varepsilon - K)\kappa_{\ell\Delta}^2\varepsilon^2\leq \sum_{k=K}^{T_\varepsilon}\Delta_k^2 \leq \sum_{k=0}^\infty \Delta_k^2$ which proves the assertion of the theorem since
\begin{align*}
    \mbE[T_\varepsilon \varepsilon^2] \leq  \kappa_m:=\frac{\gamma_1^2}{\kappa_{\ell\Delta}(1-\gamma_2^2)}\left(\frac{\Delta_0^2}{\gamma_2^2}+\frac{f_0-f^*}{\theta}\right)+\mbE[K]\varepsilon^2<\infty.
\end{align*} 
\end{proof}

\subsection{Sample Complexity} \label{sec:workcomplexity}
We show the almost sure sample complexity for Algorithm \ref{alg:ASTRO} and \ref{alg:ASTRODF}. We denote the sample complexity, i.e., total number of oracle calls until $T_\varepsilon$, for Algorithm \ref{alg:ASTRO} as $W_{\varepsilon} := \sum_{k=0}^{T_\varepsilon} (N_k^0 + N_k^{\text{s}})$, and for Algorithm \ref{alg:ASTRODF} as $W_{\varepsilon} := \sum_{k=0}^{T_\varepsilon}(\sum_{i=0}^{p} N_k^i + N_k^{\text{s}})$.

\begin{theorem}[Sample Complexity]  \label{thm:aswc}
\revise{Suppose that the assumptions outlined in Theorem \ref{thm:Convergence} hold for the relevant  cases.} For \revise{every $\omega\notin\Omega_1$ where $\Omega_1$ is a set of measure 0}, there exists a positive random variable $M$ with $\mbE[M] < \infty$ such that for for any deterministic positive sequence $\varepsilon_k \to 0$ as $k \to \infty$, %$ \to 0$, 
$$\mbP\left(W_{\varepsilon_k}\varepsilon_k^{2+\beta} \, (\log \varepsilon_k)^{-2} > M \mbox{ i.o.} \right) = 0,$$
where $\beta=4$ for~\eqref{eq:exactnk-A-DF} and~\eqref{eq:exactnk-A}, $\beta=3$ for~\eqref{eq:exactnk-B-DF}, $\beta=2$ for~\eqref{eq:exactnk-C-DF} and \eqref{eq:exactnk-B}, and $\beta=0$ for~\eqref{eq:exactnk-C}. Moreover,~\eqref{eq:exactnk-C} implies $\exists\  \varepsilon_0>0$ and $0<\kappa_w<\infty$ such that $\mbE[W_{\varepsilon}\varepsilon^{2} \, (\log \varepsilon)^{2}]\leq \kappa_w$ for all $\varepsilon\leq\varepsilon_0$.
\end{theorem}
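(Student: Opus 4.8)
The plan is to reduce the sample-complexity bound to two facts already in hand: the almost-sure iteration bound $T_\varepsilon\le M\varepsilon^{-2}$ from Theorem~\ref{thm:asic} (with $\mbE[M]<\infty$), and the lower bound $\Delta_k\ge\kappa_{l\Delta}\varepsilon$ valid for $K(\omega)\le k<T_\varepsilon$ from Lemma~\ref{lem:bounded-delta}. The bridge from iterations to samples is a per-iteration bound of the form $N_k\le C\,\Delta_k^{-\beta}\lambda_k$ almost surely for all large $k$, where $\beta$ is exactly the value tabulated in the statement. Granting this, the remaining work is a summation: write $W_\varepsilon=\sum_{k=0}^{T_\varepsilon}(\text{per-iteration oracle calls})$, discard the finite initial segment $k<K(\omega)$ (which, after multiplication by $\varepsilon^{2+\beta}(\log\varepsilon)^{-2}\to0$, is asymptotically negligible), and on $K\le k<T_\varepsilon$ replace each $\Delta_k^{-\beta}$ by the deterministic bound $(\kappa_{l\Delta}\varepsilon)^{-\beta}$.

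The per-iteration bound is the first thing I would establish, and it is where the main obstacle lies. Each rule \eqref{eq:exactnk-A-DF}--\eqref{eq:exactnk-B} stops at the first $n$ with $(\sigma_0\vee\widehat\sigma)/\sqrt n\le\kappa_{as}\Delta_k^{\beta/2}/\sqrt{\lambda_k}$, where $\widehat\sigma$ is the relevant function or gradient standard-deviation estimate, so inverting gives $N_k\le(\sigma_0\vee\widehat\sigma(\BFX_k^i,N_k))^2\kappa_{as}^{-2}\,\Delta_k^{-\beta}\lambda_k+1$. The delicate step is to bound the \emph{adaptively stopped} estimate $\sigma_0\vee\widehat\sigma(\BFX_k^i,N_k)$ by a deterministic constant almost surely for all large $k$; the circularity, namely that $\widehat\sigma$ is evaluated at the very sample size it helps determine, is the crux. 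I would control it using the uniform conditional moment bounds of Assumption~\ref{assum:martingale}, which cap the true conditional variance by $\sigma_f^2$ (resp.\ $\sigma_g^2$) regardless of the iterate, together with a concentration plus Borel--Cantelli argument (Lemma~\ref{lem:borel-cantelli}) showing that $\widehat\sigma$ cannot persistently exceed, say, $2\sigma_f$ along the trajectory. For \eqref{eq:exactnk-C} this obstacle disappears entirely: there $N_k=\lceil\sigma_0^2\lambda_k/\kappa_{as}^2\rceil$ is deterministic, so $N_k\le C\lambda_k$ with $\beta=0$.

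Assembling the almost-sure bound is then routine. Using $\Delta_k\ge\kappa_{l\Delta}\varepsilon$ and noting that each iteration of ASTRO(-DF) costs at most a constant multiple ($p+2$, resp.\ $2$) of the common $N_k$, I obtain $W_\varepsilon\le W_K+C'(\kappa_{l\Delta}\varepsilon)^{-\beta}\sum_{k=K}^{T_\varepsilon}\lambda_k$, where $W_K$ is the finite initial-segment contribution. Since $\lambda_k$ is nondecreasing, $\sum_{k=K}^{T_\varepsilon}\lambda_k\le(T_\varepsilon+1)\lambda_{T_\varepsilon}$; substituting $T_\varepsilon\le M\varepsilon^{-2}$ and $\lambda_{T_\varepsilon}=\lambda_0(\log T_\varepsilon)^{1+\epsilon_\lambda}\le\lambda_0(\log M-2\log\varepsilon)^{1+\epsilon_\lambda}$ yields, for $\varepsilon$ small, $W_\varepsilon\le \widetilde M\,\varepsilon^{-(2+\beta)}(\log\varepsilon^{-1})^{1+\epsilon_\lambda}$ for a random $\widetilde M$ inheriting finite mean from $M$. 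Because $1+\epsilon_\lambda<2$ (Definition~\ref{def:slowly} with the prescribed $\lambda_k$), the factor $(\log\varepsilon^{-1})^{1+\epsilon_\lambda}(\log\varepsilon)^{-2}\to0$, so $W_\varepsilon\varepsilon^{2+\beta}(\log\varepsilon)^{-2}\le \widetilde M$ holds for all small $\varepsilon$; applied along any $\varepsilon_k\to0$, this makes the event in the statement occur only finitely often, giving the claimed i.o.\ probability zero.

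For the expectation refinement of \eqref{eq:exactnk-C}, the deterministic sample sizes give $W_\varepsilon\le 2\sum_{k=0}^{T_\varepsilon}N_k\le 2C(T_\varepsilon+1)\lambda_{T_\varepsilon}$, which I would combine with the expected iteration bound $\mbE[T_\varepsilon\varepsilon^2]\le\kappa_m$ of Theorem~\ref{thm:asic}. The obstacle here is taking the expectation \emph{through} the slowly varying factor $\lambda_{T_\varepsilon}$, which couples with the random $T_\varepsilon$ inside a logarithm. I would handle it by truncation: on the typical event $\{T_\varepsilon\le M\varepsilon^{-2},\ M\le\varepsilon^{-1}\}$ the factor $(\log T_\varepsilon)^{1+\epsilon_\lambda}$ is dominated by the deterministic $(3\log\varepsilon^{-1})^{1+\epsilon_\lambda}$, so the contribution to $\mbE[T_\varepsilon(\log T_\varepsilon)^{1+\epsilon_\lambda}]$ from this event is at most $(3\log\varepsilon^{-1})^{1+\epsilon_\lambda}\kappa_m\varepsilon^{-2}$, and after multiplication by $\varepsilon^2(\log\varepsilon)^{-2}$ the surplus exponent $\epsilon_\lambda-1<0$ drives it to a finite limit; the complementary rare event $\{M>\varepsilon^{-1}\}$ is controlled through $\mbE[M]<\infty$ together with a mild higher-moment or tail bound on $T_\varepsilon$ extracted from the a.s.\ relation $(T_\varepsilon-K)\kappa_{l\Delta}^2\varepsilon^2\le\sum_k\Delta_k^2$. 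This truncation against the logarithmic factor is the one genuinely new estimate; everything else is bookkeeping on top of Theorem~\ref{thm:asic} and Lemma~\ref{lem:bounded-delta}.
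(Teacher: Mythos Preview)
Your proposal is correct and follows essentially the same route as the paper: bound the per-iteration sample size by $C\,\Delta_k^{-\beta}\lambda_k$, invoke $\Delta_k\ge\kappa_{l\Delta}\varepsilon$ on $K\le k<T_\varepsilon$ from Lemma~\ref{lem:bounded-delta}, and feed in $T_\varepsilon\le M\varepsilon^{-2}$ from Theorem~\ref{thm:asic}. Two minor differences are worth noting. First, for the adaptive-variance bound you propose a concentration plus Borel--Cantelli argument, whereas the paper simply cites a strong-consistency result for $\widehat\sigma_F$ (Theorem~2.8 of \cite{Sara2018ASTRO}) to get $\widehat\sigma_F^2\le 2\sigma_f^2$ and $\widehat\sigma_{\BFG}^2\le 2d\sigma_g^2$ for all large $k$; either route works, and yours is more self-contained. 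Second, for the expectation refinement in Case~\eqref{eq:exactnk-C} the paper is terse (``take expectation on both sides''), while your truncation against the $\log T_\varepsilon$ factor is more explicit; in the paper's accounting this step is absorbed because $\beta=0$ eliminates the random $\Delta_{T_\varepsilon}^{-\beta}$ and the a.s.\ bound $T_\varepsilon\le M\varepsilon^{-2}$ with $\mbE[M]<\infty$ (established specifically for \eqref{eq:exactnk-C} in Theorem~\ref{thm:asic}) is used directly, so your extra care is not strictly needed but does no harm.
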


A summary of these sample complexity results was provided in Table~\ref{tab:synopsis} earlier in the paper. An important observation is that, without CRN, different oracle orders and different sample-path structures have no differentiated effect on complexity. In this case, the rate is commensurate with the sample complexity of $\mcOtilde(\varepsilon^{-6})$ reported in STORM~\cite{miaolan2023sample}. The result also implies that first-order oracles see a distinct advantage in the presence of CRN because the regularity of sample paths can be exploited while significantly relaxing stipulations on the sample size. 

\subsection{Proof of Theorem \ref{thm:aswc}}
\begin{proof}{[of Theorem~\ref{thm:aswc}.]}
    We suppress $\omega$ for ease of notation.  We first know from Theorem 2.8 in~\cite{Sara2018ASTRO} that, $\sigmahat_{F}\left(\BFx,N_k\right)\rightarrow\sigma_F(\BFx)$ almost surely as $k\to\infty$. As a result, there exists sufficiently large $K_{fg}$ such that $\sigmahat_F^2\left(\BFx,N_k\right) \le 2\sigma_f^2$, and $\sigmahat^2_{\BFG}(\BFx,N_k) \le 2d\sigma^2_{g}$ (in Case~\eqref{eq:exactnk-A}--\eqref{eq:exactnk-C}), for any $k \ge K_{fg}$ and $\BFx \in \real^d$. With $\beta$ defined in the postulate of the theorem and $k\geq K_{fg}$ we obtain from the sampling rules of each case that $$N_k^i\leq \underbrace{2(\sigma_0^2\vee\sigma_f^2\vee d\sigma_g^2) \lambda_0 }_{:=\kappa_{ub}}(\log{k})^{1+\epsilon_\lambda}\Delta_k^{-\beta}.$$

    Lastly, let $K$ and $\kappa_{l\Delta}$ be the ones defined in the proof of Theorem \ref{thm:asic}, i.e., $\Delta_k\geq\kappa_{\ell\Delta}\varepsilon$ for all $k\geq K$. Without loss of generality, we now assume that $\varepsilon$ is small enough such that $K_{\sigma}< T_\varepsilon$, where $K_{\sigma} := K_{fg}\vee K$ and $T_\varepsilon\leq M\varepsilon^{-2}$ where $M$ is the positive random variable defined in the proof of Theorem \ref{thm:asic}. As a result we get  
    \begin{equation} \label{eq:asic-astro}
    \begin{split}
        W_\varepsilon%=\sum_{k=0}^{T_\varepsilon}(\sum_{i=0}^{p}N_k^i + N_k^{\text{s}}) 
        &\le \sum_{k=0}^{K_{\sigma}-1}(\sum_{i=0}^{p}N_k^i + N_k^{\text{s}}) +\sum_{k=K_{\sigma}}^{T_\varepsilon} (p+2) \kappa_{ub}(\log{k})^{1+\epsilon_\lambda}\Delta_k^{-\beta}\\
        & \le \underbrace{\sum_{k=0}^{K_{\sigma}-1}(\sum_{i=0}^p N_k^i + N_k^{\text{s}})}_{:=Q_w} + 
        T_\varepsilon(p+2)\kappa_{ub}(\log{T_\varepsilon})^{1+\epsilon_\lambda}\Delta_{T_\varepsilon}^{-\beta}
        %& \le Q_w + M \varepsilon^{-2}(p+2)\kappa_{ub}(\log{M \varepsilon^{-2}})^{1+\epsilon_\lambda}(\kappa_{\ell\Delta}\varepsilon)^{-\beta} 
        \leq M_w (\log{\frac{1}{\varepsilon}})^{2} \varepsilon^{-2-\beta},
    \end{split}
    \end{equation}where $p=0$ in the first-order oracles that ASTRO uses, and $p=2d$ in the zeroth-order oracles that ASTRO-DF uses. $Q_w$ and $M_w$ are positive random variables defined appropriately in the last two inequalities.

    Lastly, follow the same steps but take expectation on both sides of \eqref{eq:asic-astro} for Case~\eqref{eq:exactnk-C} to prove the expected sample complexity result.
\end{proof}

\section{CONCLUDING REMARKS} We make three remarks in closing. First, in simulation folklore, CRN is crucial for the implementation efficiency of any stochastic optimization algorithm. The complexity results in this paper seem to corroborate such folklore, suggesting that CRN may be remarkably important especially in adaptive-sampling TR algorithms operating in contexts with structured sample-paths. %Second, the heavy discrepancy between the complexity of ASTRO(-DF) and SGD in the non-CRN context may suggest simple alterations in the sufficient reduction test used within standard TR algorithms. 
And \revise{second}, we anticipate that the insights obtained from the complexity analysis of ASTRO(-DF) will transfer to other TR algorithms because the bulk of our complexity calculations arise out of  generic issues and steps within TR rather than algorithmic mechanics specific to ASTRO(-DF). \vspace{0.2in}

\appendix
% \begin{center} \large{APPENDIX}
% \end{center}

\section{Trust Region Basics}\label{sec:TRbasics}Progress of TR algorithms relies on a local model constructed using function value estimates, often as a quadratic approximation:
\begin{equation} M_k(\BFX_k^0+\BFs)=\Fbar_k^0(N_k)+\BFs^\intercal\BFG_k+\frac{1}{2}\BFs^\intercal\sfH_k\BFs, \text{ for all } \BFs\in \mcB(0;\Delta_k)\label{eq:mdefn}\end{equation} 
where $\BFG_k$ and $\sfH_k$ are the model gradient and Hessian at the incumbent solution $\BFX_k$ and $\Delta_k$ is the size of the neighborhood around $\BFX_k$ where the model is deemed credible. 

In the first-order context where we have access to a stochastic first-order oracle, the model gradient $\BFG_k \equiv \BFGbar_k(N_k) = \frac{1}{N_k}\sum_{i=1}^{N_k} \BFG(\BFX_k,\xi_i),$ is simply the unbiased gradient estimate and $\sfH_k$ replaced by it approximation using, e.g., BFGS~\cite{LBFGS1,LBFGS2}: $$\sfB_{k} = \sfB_{k-1} - (\BFS_{k-1}^\intercal\sfB_{k-1}\BFS_{k-1})^{-1}\sfB_{k-1}\BFS_{k-1}\BFS_{k-1}^\intercal\sfB_{k-1} + (\BFY_{k-1}^\intercal\BFS_{k-1})^{-1}\BFY_{k-1}\BFY_{k-1}^\intercal,$$ where $\BFY_{k-1}=\BFGbar_{k}(N_k)-\BFGbar_{k-1}(N_{k-1})$ and $\BFS_{k-1}=\BFX_{k}-\BFX_{k-1}$. 

For a zeroth-order stochastic oracle, this local model can be constructed by fitting a surface on the function estimates at neighboring points, detailed in Definition~\ref{defn:polyintermd}.

\begin{definition}[Stochastic Interpolation Models]
    Given $\BFX_k^{0}\in\real^d$ and $\Delta_k>0$, let $\Phi(\BFx)=(\phi_0(\BFx),\phi_1(\BFx), \dots, \phi_q(\BFx))$ be a polynomial basis on $\real^d$. With $p=q$ and interpolation design set $\mcX_k:=\{\BFX_k^{0}, \BFX_k^{1}, \dots , \BFX_k^{p}\}\subset \mcB(\BFX_k^0;\Delta_k)$, we seek $\BFalpha_k = \begin{bmatrix}\alpha_{k,0} & \alpha_{k,1}& \dots& \alpha_{k,p}\end{bmatrix}$ such that $$\mcM(\Phi, \mcX_k) \BFalpha_k = \begin{bmatrix}
    \Fbar_k^0(N_k) & \Fbar_k^{1}(N_k^{1}) & \cdots & \Fbar_k^{p}(N_k^{p})\end{bmatrix}^\intercal,$$ 
    where for $i=1,2,\ldots,p$, $N_k^{i}$ is the $k$-th iteration's adaptive sample size at the $i$-th design point, $\Fbar_k^{i}(N_k^{i}):=\Fbar(\BFX_k^{i},N_k^{i}),$ and $$\mcM(\Phi, \mcX_k) =[\BFphi^0_k,\BFphi^1_k,\ldots,\BFphi^p_k]^\intercal\mbox{ with }\BFphi^i_k=[\phi_1(\BFX_k^{i}),\phi_2(\BFX_k^{i}),\ldots,\phi_q(\BFX_k^{i})].$$    If the matrix $\mcM(\Phi, \mcX_k)$ is nonsingular, the set $\mcX_k$ is poised in $\mcB(\BFX_k^0;\Delta_k)$. The set $\mcX$ is $\Lambda-$poised in $\mcB(\BFX_k^0;\Delta_k)$ if $\Lambda \ge \max_{i=0,\dots,p}\max_{\BFs\in\mcB(0;\Delta_k)}|l_i(\BFX_k^0+\BFs)|$, where $l_i(\cdot)$ are the Lagrange polynomials associated with $\BFX_k^{i}$.
    The function $M_k:\mcB(\BFX_k^0;\Delta_k) \to \real$, defined as $M_k(\BFx) = \sum_{i=0}^{p} \alpha_{k,i} \phi_{i}(\BFx)$ is a stochastic polynomial interpolation model of $f$ on $\mcB(\BFX_k^0;\Delta_k)$. For  representation of $M_k$ in \eqref{eq:mdefn}, $\BFG_k=
    \begin{bmatrix}
    \alpha_{k,1} & \alpha_{k,2} & \cdots & \alpha_{k,d} 
    \end{bmatrix}^\intercal$ be the subvector of $\BFalpha_k$ and $\sfH_k$ be a symmetric matrix of size $d\times d$ with elements uniquely defined by $\alpha_{k,d+1},\alpha_{k,d+2},\cdots,\alpha_{k,p}$. 
\label{defn:polyintermd}
\end{definition}

Taylor bounds for first-order local model errors need to be replicated for the zeroth-order models for sufficient model quality. This is classically done through the concept of fully-linear models~\cite{katya:DFObook}, whose stochastic variant we list in Definition~\ref{defn:fullylinear}.
As the approximation error can be bounded using Taylor bounds in the first-order local models, similar approximation bounds are needed for the zeroth-order local models to ensure sufficient model quality. The class of fully-linear models defined in the derivative-free optimization literature~\cite{katya:DFObook} characterizes these bounds, which we list in Definition~\ref{defn:fullylinear}.

\begin{definition}
\label{defn:fullylinear} [Stochastic Fully Linear Models] Given $\BFX_k\in\real^d$ and $\Delta_k>0$, let model $M_k$ be obtained following Definition~\ref{defn:polyintermd} and define $m_k$ as its limiting function if we had $N_k^{i}=\infty,\ \forall i$. We say $M_k$ is a stochastic fully linear model of $f$ in $\mcB(\BFX_k^0;\Delta_k)$ if there exist constants $\kappa_{eg},\kappa_{ef}>0$ %dependent on $\kappa_{Lg}$ but 
independent of $\BFX_k$ and $\Delta_k$ such that %for large enough $k$ it holds almost surely that
\begin{equation}
    % \|\TD f(\BFx) - \TD M_k(\BFx)\| \le \kappa_{eg}\Delta_k\text{ and } |f(\BFx) - M_k(\BFx)| \le \kappa_{ef}\Delta_k^2\ \forall \BFx\in\mcB(\BFX_k^0;\Delta_k).
    \|\TD f(\BFx) - \TD m_k(\BFx)\| \le \kappa_{eg}\Delta_k, \text{ and } \|f(\BFx) - m_k(\BFx)\| \le \kappa_{ef}\Delta_k^2\ \forall \BFx\in\mcB(\BFX_k^0;\Delta_k).
    \label{eq:fullylinear}
\end{equation}
\end{definition}

Certain geometry of the design set will fulfill the fully-linear property of the local model~\cite{katya:DFObook}. Furthermore, to keep the model gradient in tandem with the TR radius $\Delta_k$ that ultimately reduces to 0, an additional check of $\|\BFG_k\|$ and $\Delta_k$ is often performed for the zeroth-order oracles (see criticality steps in~\cite{katya:DFObook}). The minimization is a constrained optimization and often, solving it to a point of Cauchy reduction is sufficient for TR methods to converge.

\begin{definition} [Cauchy Reduction] Given $\BFX_k\in\real^d$ and $\Delta_k>0$ and a model $M_k$ obtained following Definition~\ref{defn:polyintermd}, $\BFS_k^c$ is called the Cauchy step if 
    \begin{equation}
        M_k^0-M_k(\BFX_k^0+\BFS_k^c) \ge \frac{1}{2}\|\BFG_k\|\left( \frac{\|\BFG_k\|}{\|\sfH_k\|}\wedge \Delta_k \right).\label{eq:cr}
    \end{equation}
    We assume that $\|\BFG_k\|/\|\sfH_k\|=+\infty$ when $\|\sfH_k\|=0$. We call the RHS of \eqref{eq:cr}, the Cauchy reduction. The Cauchy step is obtained by minimizing the model $M_k(\cdot)$ along the steepest descent direction within $\mcB(\BFX_k^0;\Delta_k)$ and hence easy and quick to obtain.
\label{defn:cauchyred}
\end{definition}

\section{Proofs of Supporting Lemmas for Strong Consistency} Here, we provide proofs for the four supporting lemmas used in proving Theorem~\ref{thm:Convergence}.
\subsection{Proof of Lemma~\ref{lem:stochastic-interp}}\label{sec:prooflemma:stochastic-interp}
\begin{proof}
    Proving the lemma for ASTRO Case~\eqref{eq:exactnk-A} and \eqref{eq:exactnk-B} is straightforward by noticing that for $\BFx=\BFX_k^0+\BFs, \ \|\BFs\|\leq \Delta_k$, we can write
    \begin{align*}
        \|\TD M_k(\BFx)-\TD f(\BFx)\|&\leq \|\sfB_k\BFs\|+\|\TD M_k-\TD f_k\|+\|\TD f_k-\TD f(\BFx)\|\\
        &\leq\kappa_{\sfH}\Delta_k+\|\BFEbar_k^g(N_k)\|+\kappa_{Lg}\Delta_k,
    \end{align*}
    where $\|\sfB_k\BFs\|=\|\TD M_k(\BFx)-\TD M_k\|$ from \eqref{eq:mdefn} that is bounded by Assumption~\ref{assum:hessian-norm}, and the third term on the right hand side is bounded by Assumption~\ref{assum:lipschitz}. Moreover, \eqref{eq:grad-error} in Theorem~\ref{thm:asfinite} states that  with probability one we have  
    $\|\BFEbar_k^g(N_k)\|\leq \kappa_{ge}\Delta_k$ eventually (for large enough $k$). This completes the proof since $\kappa_{mge}\geq\kappa_{\sfH}+\kappa_{ge}+\kappa_{Lg}$.
    
    To prove this result for ASTRO-DF Case~\eqref{eq:exactnk-A-DF}--\eqref{eq:exactnk-C-DF}, we first recall from Lemma 2.9 in~\cite{Sara2018ASTRO} that if $\mcX=\{\BFX_k^{0},\BFX_k^{1},\dots,\BFX_k^{p}\}$ is a $\Lambda$-poised set on $\mcB(\BFX_k^{0};\Delta_k)$ and  $m_k(\cdot)$ is a polynomial interpolation model of $f$ on $\mcB(\BFX_k^{0};\Delta_k)$ with $M_k(\cdot)$ as the corresponding stochastic polynomial interpolation model of $f$ on $\mcB(\BFX_k^0;\Delta_k)$ constructed on observations $\Fbar(\BFX_k^{i},n(\BFX_k^{i}))=f_k^i+\Ebar_k^{i}(n(\BFX_k^{i}))$ for $i=0,1,\dots,p$, then %there exist constants $\kappa_{eg1}>0$ and $\kappa_{eg2}>0$ %(that depend on $\Lambda$ and $\kappa_{Lg}$) 
    for all $\BFx\in\mcB(\BFX_k^{0};\Delta_k)$,
            \begin{equation} \label{eq:gradienterror-df}
                \|\TD M_k(\BFx)-\TD f(\BFx)\| \le \frac{\sqrt{d}\kappa_{Lg}\Lambda}{2}\Delta_k + \frac{1}{\kappa_{Lg}} \frac{\sqrt{\sum_{i=1}^p (\Ebar_k^i(N_k^i)-\Ebar_k^0(N_k^0))^2}}{\Delta_k}.
            \end{equation}
    Note, when coordinate bases are used for interpolation, $\Lambda=1$. Then given some $\kappa_{fde}>0$, we obtain $|\Ebar_k^i(N_k^i)-\Ebar_k^0(N_k^0)|\leq \kappa_{fde}\Delta_k^2$ for large enough $k$ almost surely by Theorem \ref{thm:asfinite}. This completes the proof since $\kappa_{mge}\geq \sqrt{d}\left(\frac{\kappa_{Lg}}{2}+\frac{\kappa_{fde}}{\kappa_{Lg}}\right)$.
\end{proof}

\subsection{Proof of Lemma~\ref{lem:deltaconverge}}\label{prooflem:deltaconverge}
\begin{proof} Let $\omega\in\Omega$ denote a realization of ASTRO(DF) with a nonzero probability. To prove $\Delta_k(\omega)\to 0$, we need to show that $\sum_{k=0}^{\infty}\Delta_k^2(\omega)<\infty$.
For the remainder of this proof, we omit the $\omega$ for simplicity. We first observe that 
    \begin{equation}
        \sum_{k=0}^\infty \Delta_k^2 \leq \frac{\gamma_1^2}{\gamma_2^2(1-\gamma_2^2)}\Delta_0^2+\frac{\gamma_1^2}{1-\gamma_2^2}\sum_{i=1}^\infty\Delta_{k_i}^2, \label{eq:alldelta}
    \end{equation} where $\mcS = \{k_1,k_2,\dots\}$ is the set of successful iterations, $k_0 = -1,$ and $\Delta_{-1}=\Delta_0/\gamma_2$. \eqref{eq:alldelta} holds since from $\Delta_k\le \gamma_1\gamma_2^{k-k_i-1}\Delta_{k_i}$ for $k=k_i+1,\dots,k_{i+1}$ and each $i$, we obtain 
    $\sum_{k=k_i+1}^{k_{i+1}}\Delta_k^2 \le \gamma_1^2\Delta_{k_i}^2\sum_{k=k_i+1}^{k_{i+1}}\gamma_2^{2(k-k_i-1)} 
    \le \gamma_1^2\Delta_{k_i}^2\sum_{k=0}^{\infty}\gamma_2^{2k} = \frac{\gamma_1^2}{1-\gamma_2^2}\Delta_{k_i}^2$. Therefore, it suffices to show that $\sum_{i=1}^\infty\Delta_{k_i}^2<\infty$.
    We first prove this result for all cases except Case \eqref{eq:exactnk-C}. We then use a different analysis for Case \eqref{eq:exactnk-C}.

    \textbf{Case \eqref{eq:exactnk-A-DF}--\eqref{eq:exactnk-B}:} We know from Theorem~\ref{thm:asfinite} that given a $\kappa_{fde}>0$, there exists a $K_\Delta\in\mbN$ such that $|\Ebar_{k}^\text{s}(N_k^{\text{s}}) - \Ebar_k^{0}(N_k^0)|<\kappa_{fde}\Delta_k^2$ for all $k>K_\Delta$. Since $\mcS$ is the set of all successful iterations, by Step~\ref{ASTRO:ratio} of Algorithm~\ref{alg:ASTRO} and \ref{alg:ASTRODF} we can re-write it as $\mcS=\{k:\left( \hat{\rho}_k\geq\eta\right)\bigcap\left(\mu\|\TD M_k\geq\Delta_k\|\right) \}$. This implies that for all $k\in\mcS$, $\Fbar_k^0(N_k^0) - \Fbar_k^{\text{s}}(N_k^{\text{s}})  \ge \eta(M_k^0 - M_k^{\text{s}})\ge \left(\frac{\eta\kappa_{fcd}}{2\mu}\left(\frac{1}{\mu\kappa_\sfH}\wedge 1\right)\right)\Delta_k^2$.
    
    Letting $\theta=\left(\frac{\eta\kappa_{fcd}}{2\mu}\left(\frac{1}{\mu\kappa_\sfH}\wedge 1\right)\right)$, we then observe that 
    %Following similar steps as the proof of Theorem 4.5 in~\cite{ha2023jsim}, we can obtain for any $k \in \mcS$ 
    \begin{align*}
        \theta \sum_{i=1}^\infty\Delta_{k_i}^2 & \le \sum_{i=1}^\infty \left(\Fbar_{k_i}^0(N_{k_i}^0) - \Fbar_{k_i}^{\text{s}}(N_{k_i}^{\text{s}})\right) = \sum_{i=1}^\infty \left(f_{k_i}^0 - f_{k_i}^{\text{s}} + \Ebar_{k_i}^\text{s}(N_{k_i}^{\text{s}}) - \Ebar_{k_i}^{0}(N_{k_i}^0)\right)\nonumber \\
        & \leq f_0 - f^* + \sum_{i=1}^\infty \left|\Ebar_{k_i}^\text{s}(N_{k_i}^{\text{s}}) - \Ebar_{k_i}^{0}(N_{k_i}^0)\right|. \nonumber
    \end{align*} Here, without loss of generality we have assumed $f_0\geq f_{k_1}$. Letting $I_\Delta$ be the index of the first successful iteration after $K_\Delta$ and using $\sum_{i=I_\Delta}^{\infty}\Delta_{k_i}^2<\sum_{i=1}^\infty \Delta_{k_i}^2$, we get
    \begin{align*}
        \sum_{i=I_\Delta}^{\infty}\Delta_{k_i}^2\leq \frac{1}{\theta}\left( f_0 - f^* + \sum_{i=1}^{I_\Delta-1} \left|\Ebar_{k_i}^\text{s}(N_{k_i}^{\text{s}}) - \Ebar_{k_i}^{0}(N_{k_i}^0)\right| +  \sum_{i=I_\Delta}^{\infty} \kappa_{fde}\Delta_{k_i}^2\right),
    \end{align*} which leads to  
    \begin{align}
        \sum_{i=I_\Delta}^{\infty}\Delta_{k_i}^2 \leq \frac{1}{\theta-\kappa_{fde}}\left( f_0 - f^* + \sum_{i=1}^{I_\Delta-1} \left|\Ebar_{k_i}^\text{s}(N_{k_i}^{\text{s}}) - \Ebar_{k_i}^{0}(N_{k_i}^0)\right|\right).\label{eq:delta-ub}
    \end{align} Due to $I_\Delta$ being a finite value, the right hand side of \eqref{eq:delta-ub} as well as $\sum_{i=1}^{I_\Delta-1}\Delta_{k_i}^2$ are both finite, leading to the conclusion that $\sum_{i=1}^\infty \Delta_{k_i}^2 <\infty$.

    % \begin{equation*}   
    %     \sum_{k=0}^{\infty}\Delta_k^2 \le \frac{\gamma_1^2}{1-\gamma_2^2}\sum_{i=0}^{\infty}\Delta_{k_i}^2 < \frac{\gamma_1^2}{1-\gamma_2^2}\left(\frac{\Delta_0^2}{\gamma_2^2}+\frac{f_0-f^*+\sum_{k=0}^\infty\Ebar_k^0 (N_k^0) - \Ebar_{k}^{\text{s}}(N_k^{\text{s}})}{\theta}\right).
    % \end{equation*}

     \textbf{Case \eqref{eq:exactnk-C}:} Recall Theorem~\ref{thm:asfinite} does not hold for this ``first-order, CRN, Lipschitz gradient'' case. The other main difference in this case is that the sample size is deterministic given the filtration $\mcF_{k}$. Therefore, $\mbE[\Ebar_k^i(N_k^i)\ \vert\ \mcF_{k}]=0$. 
     These two observations enable using the bounded expectation of squared TR radii. 
%     Since for this case, we no longer than the Theorem~\ref{thm:asfinite} guarantees, we instead leverage the smoothness property of the sample paths under CRN. We hence, by Taylor, write
% \begin{equation}
%     \Fbar_k^0(N_k^0)-\Fbar_k^{\text{s}}(N_k^{\text{s}}) =  -  \BFGbar_k(N_k^0)^\intercal\BFS_k + \int_0^1 \left(\BFGbar(\BFX_k^0,N_k^0)-\BFGbar(\BFX_k^0+t\BFS_k,N_k^0)\right)^\intercal\BFS_k \mathrm{d}t,
% \end{equation}   
%     The right hand side of the above is finite with probability one because, by Corollary~\ref{cor:bounded-error}, the $|\Ebar_k^0 (N_k^0) - \Ebar_{k}^{\text{s}}(N_k^{\text{s}})|$ terms are eventually summable almost surely. 
%     As a result of $\sum_{k=0}^{\infty}\Delta_k^2<\infty$ almost surely, we conclude that  $\Delta_k \xrightarrow[]{wp1} 0\text{ as } k \rightarrow \infty$.  
%     \yunsoo{proof for \eqref{eq:exactnk-C}}    
    By taking expectation, we obtain for any $k \in \mcS$,
    \begin{equation*}
    \theta \mbE[\Delta_k^2] \le \mbE[f_k-f_{k+1}] +\mbE[\Ebar_k^0 (N_k^0) - \Ebar_{k}^{\text{s}}(N_k^{\text{s}})] = \mbE[f_k-f_{k+1}].
    \end{equation*}
    By summing all $k \in \mcS$, we have $\theta \sum_{k\in \mcS}\mbE[\Delta_k^2] \le f_0 - f^*.$ From the fact that $\mbE[\Delta_k^2] = \gamma_1\gamma_2^{2k-2k_i-2}\mbE[\Delta_{k_i}^2]$ for $k=k_i+1,\dots,k_{i+1}$ and each $i$, we obtain $\sum_{k=0}^{\infty} \mbE[\Delta_k^2] < \frac{\gamma_1^2}{1-\gamma_2^2}\left(\frac{\Delta_0^2}{\gamma_2^2}\right)$. Since $\Delta_k^2$ is positive for any $k \in \mbN$ and $\sum_{k=0}^{\infty} \mbE[\Delta_k^2]$ converges, we have $\sum_{k=0}^{\infty} \mbE[\Delta_k^2] =  \mbE[\sum_{k=0}^{\infty}\Delta_k^2] < \infty$, leading to the conclusion that $\sum_{i=1}^\infty \Delta_{k_i}^2 <\infty$.
    % We now prove \eqref{eq:delta-converge} for Case \eqref{eq:exactnk-C}. Since in Case \eqref{eq:exactnk-C} we do not have a guarantee on $|\Ebar_k^0 (N_k^0) - \Ebar_{k}^{\text{s}}(N_k^{\text{s}})|\leq \kappa_{fde}\Delta_k^2$, we use a different approach.     
    % From the dominated convergence theorem and $\Delta_k \le \Delta_{\max}$, we have 
    % \begin{equation} \label{eq:expect-delta-converge}
    % \begin{split}    \mbE\left[\sum_{k=0}^{\infty}\Delta_k^2\right]=\sum_{k=0}^{\infty}\mbE\left[\Delta_k^2\right] &\le  f_0-f^* + \sum_{k=0}^\infty\mbE[\Ebar_k^0 - \Ebar_{k}^{\text{s}}].
    % \end{split}
    % \end{equation}
    % Since $\mbE[\Ebar_k^0 - \Ebar_{k}^{\text{s}}] = 0$ from Assumption \ref{assum:martingale}, and $\Delta_k>0$, the RHS of \eqref{eq:expect-delta-converge} is finite, which implies $\sum_{k=0}^{\infty} \Delta_k^2 < \infty$ almost surely. Therefore, $\Delta_k \xrightarrow[]{wp1} 0\text{ as } k \rightarrow \infty$. 

    We now prove the second assertion of the lemma. For ASTRO-DF, in Case \eqref{eq:exactnk-A-DF}--\eqref{eq:exactnk-C-DF}, the result is proven using Lemma~\ref{lem:stochastic-interp} and the results just proven above. For ASTRO, in Case \eqref{eq:exactnk-A}--\eqref{eq:exactnk-C}, the result is directly proven as an implication of Corollary~\ref{cor:bounded-error}, since the model gradient norm at the iterate is $\|\BFEbar_k^g(N_k^0)\|$. 
\end{proof}

\subsection{Proof of Lemma~\ref{lem:astroSuccess}}\label{proofastroSuccess}
\begin{proof} We begin the proof by observing that if $\Delta_k\leq \kappa_{dum}\|\TD M_k\|$, then the model reduction achieved by the subproblem step (Step \ref{ASTRO:TRsubprob} in Algorithm \ref{alg:ASTRO} and \ref{alg:ASTRODF}) will be bounded by $M_k^0-M_k^s\geq \frac{\kappa_{fcd}}{2\kappa_{dum}}\Delta_k^2((\kappa_{dum}\kappa_{\sfH})^{-1}\wedge 1)$, by Assumption~\ref{assum:fcd}. For the remainder of the proof, we fix one random algorithm trajectory $\omega\in\Omega$, 
of the  stochastic process $$\{\BFX_k^0,\BFX_k^{\text{s}},\Delta_k,\Ebar_k^0(N_k^0),\Ebar_k^{\text{s}}(N_k^{\text{s}}),\BFEbar_k^g(N_k^0), M_k\},$$ that is the $k$-th iterate, TR radius, function estimation error at the iterate and candidate solution, and the $k$-th model, 
to show that $\Delta_k(\omega)\leq \kappa_{dum}\|\TD M_k(\omega)\|$ must eventually lead to $\rhohat_k(\omega)\geq\eta$. For ease of readability, we drop $\omega$.

We first prove the postulate of the Lemma for Case \eqref{eq:exactnk-A-DF}--Case \eqref{eq:exactnk-B}. Recall the stochastic model defined on Step~\ref{ASTRO:model-construction} of Algorithm \ref{alg:ASTRO}, 
$$M_{k}^{\text{s}} = \Fbar_k^0(N_{k}^0) + \TD M_k^\intercal \BFS_{k} + \frac{1}{2} \BFS_{k}^\intercal  \sfB_{k} \BFS_{k},$$
where the step size $\BFS_{k}$ satisfies $\|\BFS_{k}\| \leq \Delta_{k}$ for all $k$. Next, recall $\Fbar_k^{i}(N_k^{i})=f_k^{i}+\Ebar_k^{i}(N_k^{i})$ for $i\in\{0,\text{s}\}$,  to expand $f_k^{\text{s}}$ using Taylor's theorem and get
\begin{align}
    \Fbar_k^{\text{s}}(N_k^{\text{s}}) %& = f_k^0 + \TD f_k^\intercal\BFS_k + \int_0^1 \left(\TD f(\BFX_k^0+t\BFS_k)- \TD f_k\right)^\intercal\BFS_k \mathrm{d}t+\Ebar_k^{\text{s}}(N_k^{\text{s}}) \nonumber \\
    & = f_k + \TD f_k^\intercal\BFS_k + \int_0^1 \left(\TD f(\BFX_k^0+t\BFS_k)- \TD f_k\right)^\intercal\BFS_k \mathrm{d}t+\Ebar_k^{\text{s}}(N_k^{\text{s}}).\label{eq:fs-expand} 
\end{align}
Fixing $\kappa_{fde},\kappa_{ge}>0$, there exists $K_\Delta$ such that $k\geq K_\Delta$ implies $|\Ebar_k^0(N_k^0)-\Ebar_k^{\text{s}}(N_k^{\text{s}})|\leq \kappa_{fde}\Delta_k^2$ and $\|\TD M_k-\TD f_k\|\leq \kappa_{\revise{ge}}\Delta$ by Theorem~\ref{thm:asfinite} and Lemma~\ref{lem:stochastic-interp}. Therefore, replacing $f_k$ with $\Fbar_k^{0}(N_k^{0})-\Ebar_k^{0}(N_k^{0})$ we can bound the difference of the two terms by 
\begin{align}
    |M_k^{\text{s}}&-\Fbar_k^{\text{s}}(N_k^{\text{s}})| \leq |(\TD M_k-\TD f_k)^\intercal\BFS_k|+\left|\int_0^1 \left(\TD f(\BFX_k^0+t\BFS_k)- \TD f_k\right)^\intercal\BFS_k \mathrm{d}t\right| 
    \nonumber\\&+ \frac{1}{2}\left|\BFS_{k}^\intercal  \sfB_{k} \BFS_{k}\right| + |\Ebar_k^0(N_k^0)-\Ebar_k^{\text{s}}(N_k^{\text{s}})|\leq \kappa_{fde}\Delta_k^2 + \kappa_{ge}\Delta_k^2 + \frac{1}{2} \Delta_k^2 (\kappa_{Lg}+\kappa_{\sfH}),
\label{eq:bound-model-pred}\end{align} for $k\geq K_\Delta$, where the smoothness of function $f$ by  Assumption~\ref{assum:lipschitz} and boundedness of Hessian norm by Assumption~\ref{assum:hessian-norm} are used to bound the second and third term.

Then, given the postulate of the Lemma that $\kappa_{dum}\leq\frac{(1-\eta)\kappa_{fcd}}{2(\kappa_{fde}+\kappa_{ge})+\kappa_{Lg}+\kappa_{\sfH}}$, for the success ratio we get for all $k\geq K_\Delta$,
\begin{align}
 |1-\rhohat_k|&=\frac{|M_k^{\text{s}}-\Fbar_k^{\text{s}}(N_k^{\text{s}})|}{|M_k^0-M_k^{\text{s}}|}\leq\frac{\Delta_k^2 (\kappa_{fde}+\kappa_{ge}+\frac{1}{2}(\kappa_{Lg}+\kappa_{\sfH}))}{\Delta_k^2\frac{1}{2}\kappa_{fcd}\revise{\kappa_{dum}^{-1}}((\kappa_{dum}\kappa_{\sfH})^{-1}\wedge 1)} \leq (1-\eta)(1\vee\kappa_{dum}\kappa_\sfH).\label{eq:success1}
\end{align}
Therefore, $\rhohat_k\geq\eta$, proving the sought result. 

Now we prove this result for Case \eqref{eq:exactnk-C}. The main difference between this case and the proof above is that we can take advantage of the Taylor expansion on the random fields directly by writing
\begin{equation*}
    \Fbar_k^{\text{s}}(N_k) = \Fbar_k^0(N_k) +  \BFGbar_k(N_k)^\intercal\BFS_k + \int_0^1 \left(\BFGbar(\BFX_k^0+t\BFS_k,N_k)- \BFGbar(N_k)\right)^\intercal\BFS_k \mathrm{d}t,
\end{equation*} in lieu of \eqref{eq:fs-expand}. This then yields 
\begin{align}
    |M_k^{\text{s}}-\Fbar_k^{\text{s}}(N_k)|& \leq \left|\frac{1}{2}\BFS_k^\intercal\sfB_k\BFS_k\right|+\left|\int_0^1 \left(\BFGbar(\BFX_k^0+t\BFS_k,N_k)- \BFGbar(N_k)\right)^\intercal\BFS_k \mathrm{d}t\right| \nonumber \\ 
    & \leq \frac{1}{2}\Delta_k^2\kappa_\sfH+ \frac{1}{2} \Delta_k^2 \kappa_{uLG},
\end{align} where for the firm term we use Assumption \eqref{assum:hessian-norm} and for the second term we use Assumption~\eqref{assum:lipschitzgradpaths}. Then since $\kappa_{dum}\leq\frac{(1-\eta)\kappa_{fcd}}{\kappa_{\sfH}+\kappa_{uLG}},$ similar to~\eqref{eq:success1}, we get for all $k$
\begin{align*}
 |1-\rhohat_k|\leq\frac{\Delta_k^2 \frac{1}{2}(\kappa_{\sfH}+\kappa_{uLG})}{\Delta_k^2\frac{1}{2}\kappa_{fcd}\kappa_{dum}^{-1}((\kappa_{dum}\kappa_{\sfH})^{-1}\wedge 1)} \leq (1-\eta)(1\vee\kappa_{dum}\kappa_\sfH),
\end{align*}
which yields $\rhohat_k\geq\eta$ completing this proof.
\end{proof}

\subsection{Proof of Lemma~\ref{lem:bounded-delta}}\label{proofbounded-delta}
\begin{proof} %Let $\omega$ signify a random algorithm trajectory of nonzero probability such that $\|\TD f_k(\omega)\|\geq \varepsilon$ for all $k$. 
By Lemma~\ref{lem:astroSuccess}, define a large positive integer $K_\rho(\omega)$ such that if $\Delta_k(\omega)\leq \kappa_{dum} \|\TD M_k(\omega)\|$, then $\rhohat_k(\omega)\geq \eta$ for all $k\geq K_\rho(\omega)$. We fix this trajectory and drop $\omega$ in the remainder of the proof for ease of exposition. For the purpose of proving the result via contradiction, assume that there exists an infinite subsequence of iterations $\{k_j\}$ in this trajectory such that $\Delta_{k_j}<\kappa_{l\Delta}\varepsilon$ for all $j$. Choose a large iteration $t>K_\rho$ such that $t\notin \{k_j\}$ but $t+1\in\{k_j\}$. This means that $\Delta_t\geq \kappa_{l\Delta}\varepsilon$ yet $\Delta_{t+1}<\kappa_{l\Delta}\varepsilon$. In other words, the TR radius has shrunk in the $t$-th iteration and $\Delta_{t+1}=\gamma_2\Delta_t$. Note, $$\kappa_{l\Delta}\varepsilon \leq \Delta_t = \frac{1}{\gamma_2} \Delta_{t+1} < \frac{\kappa_{l\Delta}}{\gamma_2} \varepsilon\leq \frac{\kappa_{l\Delta}}{\gamma_2}\|\TD f_t\|.$$  
If the TR radius has shrunk, then by Step~\ref{ASTRO:ratio} of Algorithm~\ref{alg:ASTRO} and \ref{alg:ASTRODF} either $\rhohat_t<\eta$ or $\mu\|\TD M_t\|<\Delta_t$.  We next show that both of these conditions lead to a contradiction. 

We first show this for Case \eqref{eq:exactnk-A-DF}--\eqref{eq:exactnk-B}. By Lemma \ref{sec:prooflemma:stochastic-interp}, define $K_{g}$ such that all $k\geq K_g$ satisfy $\|\TD M_k-\TD f_k\|\leq \kappa_{mge}\Delta_k$ for $\kappa_{mge}$ defined in \eqref{eq:kappa-mge} for Case \eqref{eq:exactnk-A-DF}-\eqref{eq:exactnk-B}. We enforce iteration $t$ defined above as $t>K_\rho\vee K_g$. This then leads to 
    $$\|\TD M_t\|\geq\|\TD f_t\|-\|\TD f_t-\TD M_t\| \geq \Delta_{t}\left(\frac{\gamma_2}{\kappa_{l\Delta}}-\kappa_{mge}\right),$$ which implies that both $\Delta_t < \kappa_{dum}\|\TD M_t\|$ and $\Delta_t < \mu\|\TD M_t\|$. Hence, $t$ must be a successful iteration leading to TR radius expansion. This is a contradiction.
    
    In Case \eqref{eq:exactnk-C}, the proof follows the contraction of the TR radius after iteration $t>K_\rho$, which will require that we have either that $\|\TD M_t\|<\mu^{-1}\Delta_t$ or $\|\TD M_t\|<\kappa_{dum}^{-1}\Delta_t$ from Step~\ref{ASTRO:ratio} of Algorithm~\ref{alg:ASTRO} and \ref{alg:ASTRODF}. That is, we have $\|\TD M_t\|<(\mu^{-1}\vee\kappa_{dum}^{-1})\Delta_t$. However, since in this case, the model gradient is a SAA using iid stochastic observations (in lieu of a stochastic sample size), it is an unbiased estimator of the true function gradient. We use this fact to take the expectation of these two inequalities and use Assumption \ref{assum:martingale}, \eqref{eq:exactnk-C}, and Jensen's inequality to arrive at the contradiction
        $$\| \TD f_t \| = \| \mbE[\TD M_t] \| \le \mbE[ \|\TD M_t\|]  < \frac{1}{(\mu\wedge\kappa_{dum})}\frac{\kappa_{l\Delta}}{\gamma_2}\varepsilon.$$ %which is a contraction. 
\end{proof}

\section*{Acknowledgments}
The authors gratefully acknowledge the U.S. National Science Foundation and the Office of Naval Research for support provided by grants CMMI-2226347, \revise{N000142412398}, %. Raghu Pasupathy gratefully acknowledges 
 %for support provided by the grants 
 N000141712295, and 13000991.
 \vspace{-1 em}

\bibliographystyle{plain}
\bibliography{references}
\vspace{0.5in}

\end{document}